\newcommand{\Contraction}{\mathrel{\scalebox{1.8}{$\llcorner$}}}
\numberwithin{equation}{section}
\newtheorem{teorema}{Theorem}[section]
\newtheorem{lema}[teorema]{Lemma}
\newtheorem{prop}[teorema]{Proposition}
\newtheorem{defi}[teorema]{Definition}
\newtheorem{remark}[teorema]{Remark}
\DeclareMathOperator{\sign}{sign}
\newcommand{\R}{\mathbb{R}}
\newcommand{\N}{\mathbb{N}}
\DeclareMathOperator{\AIB}{AIB}
\DeclareMathOperator{\cof}{cof}
\DeclareMathOperator{\Id}{Id}
\DeclareMathOperator{\Div}{Div}
\DeclareMathOperator{\adj}{adj}
\DeclareMathOperator{\divergencia}{div}
\DeclareMathOperator{\T}{T}
\newcommand{\Rd}{\mathbb{R}^{d}}
\DeclareMathOperator{\imT}{im_T}
\DeclareMathOperator{\imG}{im_G}
\renewcommand{\O}{\Omega}
\newcommand{\p}{\partial}
\newcommand{\bApO}{\overline{\mathcal{A}}_p(\Omega)}
\newcommand{\ApO}{\mathcal{A}_p(\Omega)}
\title{\textbf{Invertibility of Sobolev maps through approximate invertibility at the boundary and tangential polyconvexity}}
\author[1]{\textsc{Carlos Mora-Corral}\footnote{carlos.mora@uam.es}}
\author[2]{\textsc{David Mur-Callizo}\footnote{david.mur@estudiante.uam.es}}
\affil[1]{\normalsize \textup{Departamento de Matemáticas, Universidad Autónoma de Madrid, 28049 Madrid, Spain; and\\
Instituto de Ciencias Matemáticas, CSIC-UAM-UC3M-UCM, 28049 Madrid, Spain.}}
\affil[2]{\normalsize \textup{Departamento de Matemáticas, Universidad Autónoma de Madrid, 28049 Madrid, Spain.}}
\date{}
\begin{document}
	\maketitle
	
	\textbf{Submitted:} August, 2024; \textbf{Accepted:} February, 2025.
	\begin{adjustwidth}{25pt}{25pt}
	\section*{Abstract}
		We work in a class of Sobolev $W^{1,p}$ maps, with $p > d-1$, from a bounded open set $\Omega \subset \Rd$ to $\Rd$ that do not exhibit cavitation and whose trace on $\partial \Omega$ is also $W^{1,p}$.
		Under the assumptions that the Jacobian is positive and the deformation can be approximated on the boundary by injective maps, we show that the deformation is injective.
		We prove the existence of minimizers in this class for functionals accounting for a nonlinear elastic energy and a boundary energy. 
		The energy density in $\O$ is assumed to be polyconvex, while the energy density in $\p \O$ is assumed to be 
		tangentially polyconvex, a new type of polyconvexity on $\partial \Omega$.

	\section*{Resumé}
		Nous travaillons sur une classe de fonctions Sobolev $W^{1,p}$, avec $p > d - 1$, d'un ensemble ouvert et borné $\Omega \subset \Rd$ vers $\Rd$ qui ne présentent pas de cavitation et dont la trace sur $\partial \Omega$ est également $W^{1,p}$. Sous les hypothèses que le Jacobien est positif et que la déformation peut être approximée sur la frontière par des fonctions injectives, nous montrons que la déformation est injective. Nous prouvons l'existence de minimiseurs dans cette classe pour des fonctionnelles considérant d'une énergie élastique non-linéaire et d'une énergie de frontière. La densité d'énergie dans $\O$ est supposée polyconvexe, tandis que la densité d'énergie dans $\p \O$ est supposée tangentiellement polyconvexe, un nouveau type de polyconvexité sur $\p \O$.
	\end{adjustwidth}
	
	\textbf{Keywords:} Approximate invertibility, global invertibility, Sobolev maps, nonlinear elasticity, polyconvexity.
	
	\textbf{MSC 2020:} 49J40, 49J45, 74B20, 74G25, 74G65.
	
	\section{Introduction}
	
	A classic problem in topology is to prove invertibility of a map from local invertibility and  invertibility at the boundary.
	This question has also a long history in nonlinear elasticity theory, where a deformation map $u : \O \to \Rd$ is assumed to be Sobolev $W^{1,p}$ from a bounded open set $\O \subset \Rd$ representing the reference configuration.
	Local invertibility and preservation of orientation are modelled through the constraint $\det D u > 0$ a.e., where $Du$ is the deformation gradient.
	Invertibility on the boundary is typically imposed with an adequate Dirichlet boundary condition.
	The goal is then to obtain invertibility for $u$, since this property is physically required in order to prevent interpenetration of matter.   
	
	The pioneering work of Ball \cite{Ball81} showed that when $p\geq d$, Sobolev maps are invertible a.e.\ or even homeomorphisms when they coincide in $\p \O$ with an invertible map.
	Here, \emph{invertibility a.e.}\ means that the restriction of $u$ to a set of full measure is invertible.
	Further developments of invertibility in the context of nonlinear elasticity are \cite{CiNe87,Sverak,Muller-Qi-Yan,Muller-Spector,HeMo10,HeMo12}.
	
	In this article we focus on the approaches of Henao, Mora-Corral and Oliva \cite{Henao-Mora-Oliva} and Kr\"omer \cite{Kromer}, which we explain next.
	In \cite{Henao-Mora-Oliva} it was defined the class $\bApO$ of Sobolev maps $W^{1,p} (\O, \Rd)$, with $p > d-1$, such that its trace belongs to  $W^{1,p} (\p\O, \Rd)$ and satisfy the \emph{divergence identities} (see \cite{Muller,Sverak,Muller90CRAS,Muller-Qi-Yan})
	\begin{equation}\label{eq:divergence}
		\Div \left[ \adj D u (x) \, g(u(x)) \right] = \divergencia g(u(x)) \det D u(x)
	\end{equation}
	up to the boundary, meaning that for all $\phi \in C^{\infty} (\bar{\O})$ and $g \in C^1 (\Rd,\Rd) \cap W^{1,\infty} (\Rd,\Rd)$ we have
	\begin{equation}\label{eq:introE=Fphig}
		\begin{split}
			& \int_{\p \O} \phi (x) \left( \adj D u(x) \, g(u(x)) \right) \cdot n (x) \, \dd \mathcal{H}^{d-1} (x)  - \int_{\O} \left[ \adj D u(x) \, g(u(x)) \right] \cdot D \phi (x) \, \dd x \\
			& \qquad = \int_{\O} \det D u(x) \, \phi (x) \divergencia g(u(x)) \, \dd x ,
		\end{split}
	\end{equation}
	where $n$ is the unit exterior normal of $\p \O$.
	This class is the version \emph{up to the boundary} of class $\ApO$ defined in \cite{Barchiesi} for which equality \eqref{eq:introE=Fphig} is requested to hold only for $\phi \in C^{\infty}_c (\O)$ (and, hence, the integral on $\p \O$ vanishes).
	It was shown there that maps in $\ApO$ enjoy extra regularity than a typical $W^{1,p}$ function, and, earlier in \cite{Henao-Mora, HeMo12}, that they do not present cavitation.
	The main result in \cite{Henao-Mora-Oliva} is that deformations in $\bApO$ that coincide with an invertible map on $\p \O$ are themselves invertible a.e.
	
	In \cite{Kromer} it is performed a topological study of maps that are \emph{approximately invertible on the boundary}, meaning that their restriction to $\p \O$ can be uniformly approximated by continuous invertible maps.
	The class of such maps is denoted by $\AIB$.
	Then, it was shown that those deformations that, in addition, are Sobolev $W^{1,p}$ with $p \geq d$ and preserve the orientation are invertible a.e.
	An advantage of his approach is that the invertibility condition is only required on the boundary, which avoids the delicate issue of homeomorphic extension and allows for boundary conditions different from Dirichlet.
	Moreover, the notion of approximate invertibility on the boundary permits self-contact at the boundary, while forbiding self-interpenetration.
	
	In this article we extend, in the class $\bApO$ with $p > d-1$, the result of \cite{Kromer} on invertibility a.e.\ from approximate invertibility on the boundary.
	At the same time, it also generalizes the result of \cite{Henao-Mora-Oliva} inasmuch the boundary data is not required to be an invertible a.e.\ map on the whole $\O$ but only approximately invertible on the boundary.
	
	After establishing the invertibility result, in view of its applications in nonlinear elasticity, we study energies of the form
	\[
	I(u) = \int_{\Omega} W(x,u(x),Du(x))\dd x + \int_{\partial\Omega}V(x,u(x),D^{\tau}u(x),n(x))\dd\mathcal{H}^{d-1}(x)
	\]
	in the class $\bApO \cap \AIB$.
	The integral in $\O$ is standard in nonlinear elasticity and accounts for the elastic energy plus volume forces.
	The usual assumption is that $W$ is \emph{polyconvex}, that is, convex in the minors of the derivative.
	The integral in $\p \O$ has not received as much attention as the volume term.
	It accounts for the applied surface forces and, in general, the surface interaction potentials; see, e.g., \cite[Section 5.1]{Ciarlet88} or \cite{PodioVergara}, and, in the context of binary alloys,  \cite{OwenSternberg92}.
	The function $D^{\tau} u$ is the tangential derivative of $u_{| \p \O}$, and $n(x)$ is the normal to $\O$.
	
	A necessary and sufficient condition for the lower semicontinuity of the integral in $\p \O$
	is the tangential quasiconvexity of $V$; see \cite{DaFoMaTr99}.
	In this article we introduce a natural sufficient condition: the \emph{tangential polyconvexity}, which roughly consists of being convex in the minors of the tangential derivative of $u_{|\p \O}$. 
	With this, we prove the existence of minimizers of $I$ in the class $\bApO \cap \AIB$ under several boundary conditions, not necessarily Dirichlet.
	Moreover, we compare our notion of tangential polyconvexity with the related one of \emph{interface polyconvexity} from \cite{Silhavy}. Both concepts refer to maps that are convex in certain minors of the derivative of $u_{|\p \O}$. The interface polyconvexity, originally defined as the supremum of a family of null Lagrangians, requires, in an \textit{a posteriori} characterization, the positive $1$-homogeneity. This is not needed in the definition of tangential polyconvexity, because it is based on the convexity on the minors of the tangential differential of $u_{|\p \O}$, once a basis of the tangent space is chosen.
	
	The outline of this article is as follows.
	In Section \ref{se:notation} we explain the general notation and preliminary concepts and results.
	Section \ref{se:class} recalls the definition and main properties of class $\bApO$.
	In Section \ref{se:injectivity} we prove the injectivity a.e.\ of deformations in $\bApO \cap \AIB$.
	In Section \ref{section counterexample} we show, by means of a counterexample, that the injectivity a.e.\ does not hold in general if the class $\bApO$ is replaced by the class $\ApO$.
	Section \ref{seccion algebra multi} shows the weak continuity in $W^{1,p} (\p \O, \Rd)$ of the minors of the tangential derivative.
	In Section \ref{se:tangential} we define tangential polyconvexity: it implies tangential quasiconvexity and is equivalent to polyconvexity of an extension.
	In Section \ref{sec: Interface polyconvexity}, through the language of multilinear algebra, we give insight into the interface polyconvexity and compare it to the tangential polyconvexity.
	Section \ref{se:examples} shows that typical examples of surfaces potentials used in the literature are tangentially polyconvex.
	Finally, in Section \ref{se:existence} we prove the existence of minimizers of $I$ in $\bApO \cap \AIB$ under the assumptions of polyconvexity of the energy density in $\O$ and tangential polyconvexity of the energy density on $\p \O$.
	
	\section{Notation and preliminaries}\label{se:notation}
	
	We first specify the general notation used in the article.
	
	We will use $\Omega$ to refer to a bounded open subset of $\Rd$, which most of the times will be assumed to have a Lipschitz boundary and that $\Rd\setminus\partial\Omega$ has exactly two connected components.
	Here $d\in\N$ is the dimension of the space, which will be assumed to be $d \geq 2$; otherwise, $\Rd\setminus\partial\Omega$ will have three connected components for an interval $\O$.
	The issue of invertibility for $d=1$ is essentially trivial, since for Sobolev functions is reduced to having positive derivative.
	The set $\Omega$ represents an elastic material in its reference configuration, and $u:\overline{\Omega}\rightarrow\Rd$ is the deformation of the body.
	
	The notation for Sobolev $W^{1,p}$ and Lebesgue $L^p$ spaces is standard.
	Most of the times, the exponent $p$ will satisfy $p>d-1$.
	For $u\in W^{1,p}(\Omega;\Rd)$, we denote by $u_{|\partial\Omega}$ the trace of $u$ on $\partial\Omega$.
	It is known that $u_{|\partial\Omega}\in L^{p}(\partial\Omega;\Rd)$, and we will write $u\in W^{1,p}(\partial\Omega;\Rd)$ whenever its trace $u_{|\partial\Omega}$ belongs to $W^{1,p}(\partial\Omega;\Rd)$.
	
	We will abbreviate \emph{almost everywhere} as \emph{a.e.}, which refers to the $d$-dimensional Lebesgue measure $\mathcal{L}^{d}$, unless otherwise specified.
	We say that two subsets of $\Rd$ are equal a.e.\ if its symmetric difference has $\mathcal{L}^{d}$-measure zero.
	We will also use $\mathcal{H}^{d-1}$ to refer to the $(d-1)$-dimensional Hausdorff measure.
	The set $S^{d-1} \subset \Rd$ is the $d$-dimensional unit sphere.
	
	The set $\R^{d\times d}$ is the set of square matrices of order $d$, and its subset $\R_{+}^{d\times d}$ consists of those with positive determinant.
	The adjoint $\adj F$ of an $F \in \R^{d\times d}$ is the square matrix that satisfies $F \adj F = (\det F) I$, where $I \in \R^{d\times d}$ is the identity matrix.
	The cofactor $\cof F$ is the transpose of $\adj F$.
	We will use $\mathcal{L}(U;V)$ to denote the set of linear maps between two (finite-dimensional) vector spaces $U$ and $V$.

	A key concept studied in \cite{Kromer} is the \emph{approximate invertibility on the boundary}.
	
	\begin{defi}\label{definition AIB}
		Let $\Omega\subset\Rd$ be open and bounded and let $u\in C(\partial\Omega;\Rd)$. We say that $u$ is approximate invertible on the boundary if there exists a sequence of injective maps $\left\{\varphi_{k}\right\}_{k\in\N}\subset C\left(\partial\Omega;\Rd\right)$ with $\varphi_{k}\rightarrow u$ uniformly on $\partial\Omega$. The class of all such maps is denoted by $\AIB(\Omega)$, or by $\AIB$ if $\Omega$ is clear from the context.
		We say that $u : \bar{\O} \to \Rd$ is approximate invertible on the boundary if so is $u_{|\p \O}$.
	\end{defi}
	
	Condition $\AIB$ models possible self-contact at the boundary without interpenetration in the interior, hence it is a realistic class to pose problems in nonlinear elasticity.
	Note that if $u:\partial\Omega\rightarrow\Rd$ is continuous and injective then $u\in\AIB$; and if $u\in\AIB$, then $u$ is continuous on $\partial\Omega$.
	The following lemma is a variant of \cite[Lemma 2.3]{Kromer} and its proof is elementary relying on the fact that $W^{1,p}(\partial\Omega;\Rd)$ is compactly embedded in $C(\partial\Omega;\Rd)$ for $p>d-1$.
	
	\begin{lema}\label{compactness of AIB}
		Let $\O$ be a Lipschitz domain.
		Let $p>d-1$, let $u\in W^{1,p}\left(\partial\Omega;\Rd\right)$ and let $\left\{u_{k}\right\}_{k\in\N}\subset W^{1,p}(\partial\Omega;\Rd) \cap \AIB$ be such that $u_{k}\rightharpoonup u$ in $W^{1,p}(\partial\Omega;\Rd)$.
		Then $u\in\AIB$.
	\end{lema}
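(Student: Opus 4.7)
The plan is to combine the compact Sobolev embedding mentioned just before the statement with a standard diagonal selection. Since $\p\O$ is a compact $(d-1)$-dimensional Lipschitz manifold and $p > d-1$, the embedding $W^{1,p}(\p\O;\Rd) \hookrightarrow C(\p\O;\Rd)$ is compact. From $u_k \rightharpoonup u$ in $W^{1,p}(\p\O;\Rd)$ we therefore extract uniform convergence $u_k \to u$ on $\p\O$ (possibly along a subsequence, but weak convergence together with compactness of the embedding in fact forces uniform convergence of the whole sequence by the usual subsequence-of-subsequence argument).

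Next I would unpack the hypothesis $u_k \in \AIB$: for each $k$, Definition \ref{definition AIB} provides a sequence of continuous injective maps $\{\varphi_{k,j}\}_{j\in\N} \subset C(\p\O;\Rd)$ with $\varphi_{k,j} \to u_k$ uniformly as $j \to \infty$. Choose an index $j_k$ so that
\[
\|\varphi_{k,j_k} - u_k\|_{L^\infty(\p\O;\Rd)} < \tfrac{1}{k},
\]
and set $\psi_k := \varphi_{k,j_k}$. Each $\psi_k$ is continuous and injective by construction.

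A triangle inequality then gives
\[
\|\psi_k - u\|_{L^\infty(\p\O;\Rd)} \le \|\psi_k - u_k\|_{L^\infty(\p\O;\Rd)} + \|u_k - u\|_{L^\infty(\p\O;\Rd)} \longrightarrow 0,
\]
so $\psi_k \to u$ uniformly on $\p\O$, and hence $u \in \AIB$ by Definition \ref{definition AIB}. In particular, the continuity of $u$ on $\p\O$ required by the definition is automatic, since uniform limits of continuous maps are continuous, or equivalently since $u$ is a $W^{1,p}$ function and $p>d-1$.

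There is no real obstacle here; the only point that must be handled carefully is the passage from weak $W^{1,p}$-convergence to uniform convergence, which is precisely the role of the compact embedding $W^{1,p}(\p\O;\Rd) \hookrightarrow C(\p\O;\Rd)$ valid for $p > d - 1$ on a Lipschitz $\p\O$. Everything else is a direct diagonal extraction.
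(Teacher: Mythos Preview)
Your argument is correct and is precisely the elementary proof the paper alludes to: it explicitly states that the lemma follows from the compact embedding $W^{1,p}(\partial\Omega;\Rd)\hookrightarrow C(\partial\Omega;\Rd)$ for $p>d-1$, and your diagonal extraction is the standard way to finish.
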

	
	We say that a function $u:\Omega\rightarrow\Rd$ is \emph{injective a.e.}\ if there exists some $\widetilde{\Omega}\subset\Omega$ such that $\mathcal{L}^{d}(\Omega\setminus\widetilde{\Omega})=0$ and $u$ is injective in $\widetilde{\Omega}$.
	
	The following proposition is a version of Federer's area formula \cite{Federer}; this specific formulation can be found in \cite[Proposition 2.6]{Muller-Spector}.
	
	\begin{prop}\label{proposicion de Omega0}
		Let $u\in W^{1,1}(\Omega;\Rd)$.
		Then there exists a measurable set $\Omega_{0}\subset\Omega$, with $\mathcal{L}^{d}\left(\Omega\setminus\Omega_{0}\right)=0$, such that the following property holds.
		For any measurable $A\subset\Omega$, define $\mathcal{N}_{u,A}:\Rd\rightarrow\N\cup\{\infty\}$ as follows: $\mathcal{N}_{u,A}(y)$ equals the number of $x\in\Omega_{0}\cap A$ such that $u(x)=y$.
		Then $\mathcal{N}_{u,A}$ is measurable and for any measurable $\varphi:\Rd\rightarrow\R$,
		\begin{equation*}%\label{ecuacion de i}
			\int_{A}\varphi(u(x))\abs{\det Du(x)}\dd x=\int_{\Rd}\varphi(y)\mathcal{N}_{u,A}(y)\dd y,
		\end{equation*}
		whenever either integral exists.
	\end{prop}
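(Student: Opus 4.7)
The plan is to deduce this from the classical Federer area formula for Lipschitz maps by means of a Lusin-type approximation of $u$. Recall that for a Lipschitz map $v : \Omega \to \Rd$ one has the classical formula
\[
\int_A \varphi(v(x)) \abs{\det Dv(x)} \dd x = \int_{\Rd} \varphi(y) \, \#\{x \in A : v(x) = y\} \dd y
\]
for every measurable $A \subset \Omega$ and every measurable $\varphi$ for which one side makes sense; see \cite{Federer}. To extend this to $u \in W^{1,1}(\Omega;\Rd)$, I would invoke the standard Federer--Ziemer type approximation: for every $j \in \N$ there exist a Lipschitz map $v_j : \Omega \to \Rd$ and a measurable set $E_j \subset \Omega$ with $\mathcal{L}^d(\Omega \setminus E_j) < 1/j$, such that $u = v_j$ and $Du = Dv_j$ $\mathcal{L}^d$-a.e.\ on $E_j$ (obtained via a maximal-function truncation argument applied to the weak gradient of $u$).

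After discarding a null set from each $E_j$, set $\Omega_0 := \bigcup_{j \in \N} E_j$, which is measurable with $\mathcal{L}^d(\Omega \setminus \Omega_0) = 0$ and on which $u$ agrees pointwise with at least one $v_j$. Decompose $\Omega_0$ into disjoint pieces by $F_1 := E_1$ and $F_{j+1} := E_{j+1} \setminus (E_1 \cup \cdots \cup E_j)$, so that $\Omega_0 = \bigsqcup_{j \in \N} F_j$.

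For a measurable $A \subset \Omega$ and $\varphi \geq 0$, set $A_j := A \cap F_j$ and apply the classical area formula to $v_j$ on $A_j$: since $u = v_j$ and $Du = Dv_j$ on $A_j \subset E_j$, this yields
\[
\int_{A_j} \varphi(u(x)) \abs{\det Du(x)} \dd x = \int_{\Rd} \varphi(y) \, \#\{x \in A_j : u(x) = y\} \dd y.
\]
Summing over $j$ by monotone convergence, and using disjointness of the $F_j$ to identify $\sum_j \#\{x \in A_j : u(x) = y\} = \mathcal{N}_{u,A}(y)$, gives the identity for nonnegative $\varphi$; the signed case follows by splitting $\varphi = \varphi^+ - \varphi^-$, provided one side is finite. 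Measurability of $\mathcal{N}_{u,A}$ is inherited from the measurability of the partial sums produced by Federer's theorem applied to each $v_j$.

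The main obstacle is securing the Lusin-type approximation with simultaneous agreement of both values and weak gradients on $E_j$; once this tool is in hand the remainder is a bookkeeping argument based on the disjoint decomposition and monotone convergence. A minor subtlety is that the set in $\Omega \setminus \Omega_0$ where $\abs{\det Du} > 0$ has zero $\mathcal{L}^d$-measure (by Calderón's theorem on approximate differentiability), so no mass is lost on the left-hand side by restricting attention to $\Omega_0$.
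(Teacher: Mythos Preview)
The paper does not give its own proof of this proposition: it is stated as a quotation of Federer's area formula, with the specific formulation attributed to \cite[Proposition 2.6]{Muller-Spector}. So there is nothing in the paper to compare your argument against.

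Your outline is the standard route to this result and is essentially correct. The Lusin-type approximation of a $W^{1,1}$ map by Lipschitz maps (with agreement of both the function and its gradient on a large set) is exactly the tool used in the literature to extend the area formula from Lipschitz to Sobolev maps; once you have it, the disjoint decomposition $\Omega_0 = \bigsqcup_j F_j$ and monotone convergence handle the bookkeeping as you describe. Two minor remarks: first, your final comment invoking Calder\'on's theorem is unnecessary, since $\mathcal{L}^d(A\setminus\Omega_0)=0$ already ensures $\int_A = \int_{A\cap\Omega_0}$ on the left-hand side; second, make sure that when you ``discard a null set from each $E_j$'' you also arrange that $u$ and $v_j$ agree \emph{pointwise} on the retained set, so that the identity $\#\{x\in A_j: u(x)=y\} = \#\{x\in A_j: v_j(x)=y\}$ holds for every $y$, not merely almost every $y$---this is what makes the counting-function identification clean.
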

	
	Here, $Du$ stands for the distributional derivative of the Sobolev function $u$. We will mainly use $\mathcal{N}_{u,\Omega}$, which will be denoted by $\mathcal{N}_{u}$.
	
	We define the concept of the geometric image of a set $\Omega\subset\Rd$ under a function $u$ (see \cite{Muller-Spector}).
	
	\begin{defi}Let $u\in W^{1,p}\left(\Omega;\Rd\right)$ and let $\Omega_{0}$ be the set of Proposition \ref{proposicion de Omega0}. We define the geometric image of $\Omega$ under $u$ as $\imG(u;\Omega)\coloneqq u\left(\Omega_{0}\right)$.\end{defi}
	
	Note that the set $\Omega_{0}$ described in Proposition \ref{proposicion de Omega0} is not uniquely defined. In particular, if $\Omega_{1}$ is another set with the same properties, then for any measurable $A\subset\Omega$, we have that $u(A\cap\Omega_{0})=u(A\cap\Omega_{1})$ a.e.\ and the two definitions of $\mathcal{N}_{u,A}$ that come from these sets coincide a.e. For example, in \cite{Barchiesi}, $\Omega_{0}$ is chosen as the set of approximate differentiability points of $u$. 
	
	A fundamental tool in this article, as well as in the context of nonlinear elasticity, is Brouwer's degree; see, e.g., \cite[Chapter 1]{Deimling}.
	The degree on $\Omega$ of (the continuous representative of) a map in $W^{1,p}(\partial\Omega;\Rd)$ is defined as the degree of any continuous extension to $\overline{\Omega}$.
	Another important concept is the \emph{topological image} (see \cite{Sverak,Muller-Spector}).
	
	\begin{defi}\label{topological image} Let $\Omega\subset\Rd$ be a bounded domain and let $u\in C\left(\partial\Omega;\Rd\right)$. We define the topological image of $\Omega$ with respect to $u$ as \begin{equation*}\imT\left(u;\Omega\right)\coloneqq\left\{y\in\Rd\setminus u\left(\partial\Omega\right):\deg(u;\Omega;y)\neq0\right\}.\end{equation*}\end{defi}
	
	Note that $\deg\left(u;\Omega;y\right)=0$ for all $y$ in the unbounded component of $\Rd\setminus u\left(\partial\Omega\right)$.
	Therefore $\imT(u;\Omega)$ is a bounded set, and is also open because of the continuity of the degree.
	
	In this article we will assume that $\Rd\setminus\partial\Omega$ has exactly two connected components, which excludes the case $d=1$.
	By the Jordan separation theorem, $\Rd\setminus\partial\Omega$ has exactly two connected components if $\partial \Omega$ is homeomorphic to the sphere, but the converse is not true, as shown by the classic example of the Warsaw circle.
	In addition, $\Omega$ is assumed to have a Lipschitz boundary.
	The following proposition clarifies an implication of these assumptions.
	This is probably a known result, but we have not found a specific reference.
	
	\begin{prop}\label{proposition of the connectedness of boundary}
		Let $\Omega\subset\Rd$ be open, bounded, with a Lipschitz boundary and such that $\Rd\setminus\partial\Omega$ has exactly two connected components.
		Then $\Omega$ and $\partial\Omega$ are connected.
	\end{prop}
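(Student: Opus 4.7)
The plan is to handle the connectedness of $\O$ and of $\p\O$ separately. For $\O$, observe that $\R^d\setminus\p\O$ is the disjoint union of the two open sets $\O$ and $U:=\R^d\setminus\overline{\O}$; both are nonempty ($U$ contains a neighborhood of infinity, since $\O$ is bounded), and each is a union of connected components of $\R^d\setminus\p\O$. As by hypothesis there are exactly two such components, this decomposition must be the partition into components, so $\O$ (and likewise $U$) is connected.

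For $\p\O$, I argue by contradiction, assuming $\p\O$ has at least two components. Since $\O$ is a Lipschitz domain, $\p\O$ is a compact topological $(d-1)$-manifold, and so is each of its connected components $\Gamma_1,\Gamma_2,\ldots$ Fix $\Gamma_1$ and apply the Jordan--Brouwer separation theorem to decompose $\R^d\setminus\Gamma_1=A\sqcup B$ with $A$ bounded and $B$ unbounded, both open and connected. Since $\O$ and $U$ are connected and disjoint from $\Gamma_1$, each lies entirely in $A$ or entirely in $B$; this leaves four cases to rule out.

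If $\O$ and $U$ lie on the same side of $\Gamma_1$, say both in $A$, then from $\R^d=\O\sqcup U\sqcup\p\O=A\sqcup\Gamma_1\sqcup B$ one obtains $B\subset\p\O$, which contradicts $\mathcal{L}^d(\p\O)=0$ (which holds because $\O$ is Lipschitz), since $B$ is open and nonempty and therefore of positive $d$-dimensional measure. If $\O$ and $U$ lie on opposite sides, pick any point $p$ in another component $\Gamma_2$: since $\Gamma_2\cap\Gamma_1=\emptyset$ and $\Gamma_1$ is closed, $\dist(p,\Gamma_1)>0$, so for sufficiently small $r$ the ball $B(p,r)$ is connected and disjoint from $\Gamma_1$, hence entirely contained in $A$ or in $B$. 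On the other hand, by the local one-sided structure of the Lipschitz boundary at $p$, the ball $B(p,r)$ meets both $\O$ and $U$, and therefore meets both $A$ and $B$, contradicting the previous containment.

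The only delicate point in this plan is the appeal to Jordan--Brouwer for a single connected component of $\p\O$, which rests on the fact that the boundary of a Lipschitz domain is a closed topological $(d-1)$-manifold; this is standard but worth citing. Granted that, every other step reduces to the local one-sided structure of $\p\O$ and the elementary partition of $\R^d$ into the three disjoint pieces $\O$, $U$ and $\p\O$.
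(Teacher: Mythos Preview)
Your argument for the connectedness of $\Omega$ is essentially the same as the paper's: both observe that $\Omega$ and $U=\Rd\setminus\overline{\Omega}$ form an open partition of $\Rd\setminus\partial\Omega$, so each is a union of connected components, and since there are exactly two and both sets are nonempty, each must be a single component.

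For the connectedness of $\partial\Omega$ the two proofs diverge. The paper first shows that $\Rd\setminus\Omega=\overline{U}$, using the Lipschitz condition to check that every boundary point is approached from $U$; since $\overline{U}$ is the closure of a connected set, $\Rd\setminus\Omega$ is connected, and then a result of Czarnecki (for a bounded open connected $\Omega\subset\Rd$ with connected complement, $\partial\Omega$ is connected) yields the conclusion. Your route is instead a direct contradiction argument: regard each component of $\partial\Omega$ as a closed topological $(d-1)$-manifold, apply the generalized Jordan--Brouwer separation theorem to one component $\Gamma_1$, and then use the two-sided local structure of the Lipschitz boundary at a point of a \emph{different} component to derive a contradiction. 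This is correct; the step that needs care is precisely the one you flag, namely that the boundary of a Lipschitz domain is a compact topological hypersurface, so that Jordan--Brouwer (in its manifold form, via Alexander duality) applies. The trade-off is that the paper's route uses only an elementary closure argument plus a black-box lemma about connected complements, while yours invokes a heavier topological separation theorem but is entirely self-contained once that theorem is granted and does not require locating the Czarnecki reference.
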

	
	\begin{proof}
		Let us prove that  $U \coloneqq \Rd\setminus\overline{\Omega}$ and $\Omega$ are the two connected components of $\Rd\setminus\partial\Omega$.
		Clearly, $U$ and $\Omega$ are open, disjoint and their union is $\Rd\setminus\partial\Omega$.
		This implies that any connected set of $\Rd\setminus\partial\Omega$ is contained either in $\Omega$ or in $U$.
		
		Let $\Omega_{1}$ be a connected component of $\Omega$ and let $\Omega_{2}$ be the connected component of $\Rd\setminus\partial\Omega$ containing $\Omega_{1}$. Then $\Omega_{1}\subset\Omega_{2}\subset\Omega$ and since $\Omega_{1}$ is a connected component of $\Omega$ and $\Omega_{2}$ is connected, then $\Omega_{1}=\Omega_{2}$. This means that any connected component of $\Omega$ is also a connected component of $\Rd\setminus\partial\Omega$.
		Analogously, any connected component of $U$ is a connected component of $\Rd\setminus\partial\Omega$.
		As $\Rd\setminus\partial\Omega$ has exactly two connected components, they have to be $\O$ and $U$. 
		
		Now we show that $\Rd\setminus\Omega=\overline{U}$.
		Since $U \subset \Rd\setminus\Omega$ and $\Rd\setminus\Omega$ is closed, then $\overline{U}\subset\Rd\setminus\Omega$.
		In addition, as $\Omega$ is a Lipschitz domain, every neighborhood of every point of $\partial \Omega$ has points in $U$.
		Therefore, $\partial \Omega \subset \overline{U}$ and, hence, $\Rd\setminus\Omega = U \cup \partial \Omega \subset \overline{U}$.
		Thus, $\Rd\setminus\Omega=\overline{U}$, which is connected as the closure of a connected set.
		The result in \cite{Czarnecki} shows that $\partial\Omega$ is connected.
	\end{proof}
	
	In \cite[Theorem 4.2]{Kromer} it is proved that, when $\Rd\setminus\partial\Omega$ has exactly two connected components, for any $u\in C(\overline{\Omega};\Rd)\cap\AIB$, the function $\deg(u;\Omega; \cdot)$ is constantly $1$ or $-1$ in $\imT(u;\Omega)$.
	By Tietze's extension theorem and the fact that Brouwer's degree only depends on the boundary values, we can  give a version of that result for $u\in C(\partial\Omega;\Rd)$.
	
	\begin{teorema}\label{teorema 4.2 revisited}
		Let $\Omega\subset\Rd$ be a bounded open set such that $\Rd\setminus\partial\Omega$ has exactly two connected components and let $u\in C(\partial\Omega;\Rd)\cap\AIB$. Then there exists $\gamma\in\{\pm1\}$ such that $\deg(u;\Omega;y)=\gamma$ for every $y\in\imT(u;\Omega)$.
	\end{teorema}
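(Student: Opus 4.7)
The plan is to reduce to \cite[Theorem 4.2]{Kromer}, which gives the same conclusion under the extra hypothesis $u \in C(\overline{\O};\Rd)$, by showing that both sides of the desired equality depend only on the boundary values of $u$.

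First, since $u \in C(\p\O;\Rd)$ and $\p\O$ is closed in $\Rd$, I would invoke Tietze's extension theorem (applied componentwise) to obtain a continuous extension $\tilde u \in C(\overline{\O};\Rd)$ with $\tilde u_{|\p\O} = u$. By Definition \ref{definition AIB}, approximate invertibility on the boundary only depends on the restriction to $\p\O$, so $\tilde u \in \AIB$ as well. Thus $\tilde u \in C(\overline{\O};\Rd) \cap \AIB$, and \cite[Theorem 4.2]{Kromer} yields some $\gamma \in \{\pm 1\}$ with $\deg(\tilde u;\O;y)=\gamma$ for every $y \in \imT(\tilde u;\O)$.

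Next, I would translate this back to $u$. The definition of $\deg(u;\O;\cdot)$ for $u \in C(\p\O;\Rd)$, as recalled in the text just before Definition \ref{topological image}, is precisely that of the degree of any continuous extension to $\overline{\O}$; since Brouwer's degree is well known to depend only on boundary values, $\deg(u;\O;y) = \deg(\tilde u;\O;y)$ for every $y \in \Rd \setminus u(\p\O)$. Similarly $u(\p\O) = \tilde u(\p\O)$, so by Definition \ref{topological image} we have $\imT(u;\O) = \imT(\tilde u;\O)$. Combining these two identifications gives $\deg(u;\O;y) = \gamma$ for every $y \in \imT(u;\O)$, which is the claim.

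There is no real obstacle here: the argument is essentially a bookkeeping step that upgrades \cite[Theorem 4.2]{Kromer} from deformations defined on $\overline{\O}$ to deformations defined only on $\p\O$. The only point worth double-checking is that the notion of degree used in Definition \ref{topological image} is extension-independent, which is standard and already implicitly used in the paper when they speak of ``the degree on $\O$ of (the continuous representative of) a map in $W^{1,p}(\p\O;\Rd)$''.
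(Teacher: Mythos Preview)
Your proposal is correct and matches the paper's approach exactly: the paper does not give a separate proof but simply remarks, in the paragraph preceding the statement, that the result follows from \cite[Theorem 4.2]{Kromer} via Tietze's extension theorem and the boundary-dependence of Brouwer's degree. Your write-up fills in precisely those details.
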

	
	\section{Class $\overline{\mathcal{A}}_{p}\left(\Omega\right)$}\label{se:class}
	
	In this section we recall the functional class $\overline{\mathcal{A}}_{p}\left(\Omega\right)$, which was introduced in \cite{Henao-Mora-Oliva}.
	
	Consider a map $u\in W^{1,p}(\Omega;\Rd)$ and denote by $u _{|\p \O}$ its trace on $\p \O$.
	If $u _{|\p \O}$ belongs to $W^{1,p} (\p \O; \Rd)$, with a small abuse of notation we write $u \in W^{1,p} (\p \O; \Rd)$ and $u \in W^{1,p}(\Omega;\Rd) \cap W^{1,p} (\p \O; \Rd)$.
	The derivative of $u_{| \p \O}$ will be denoted by $D^{\tau} u$.
	That the divergence identities \eqref{eq:divergence} hold in $\O$ means that for all $\phi \in C^1_c (\O)$ and $g \in C^1_c (\Rd,\Rd)$ we have
	\begin{equation}\label{eq:dividentitiesO}
		\int_{\O} \left[ \adj D u(x) \, g(u(x)) \right] \cdot D \phi (x) \, \dd x + \int_{\O} \det D u(x) \, \phi (x) \divergencia g(u(x)) \, \dd x = 0 ,
	\end{equation}
	while that they hold in $\bar{\O}$ means that for all $\phi \in C^1 (\bar{\O})$ and $g \in C^1_c (\Rd,\Rd)$ we have
	\begin{equation}\label{eq:dividentitiesbarO}
		\begin{split}
			&\int_{\O} \left( \left[ \adj D u(x) \, g(u(x)) \right] \cdot D \phi (x) + \det D u(x) \, \phi (x) \divergencia g(u(x)) \right) \dd x \\
			& \qquad = \int_{\p \O} \phi (x) \left( \adj D^{\tau} u(x) \, g(u(x)) \right) \cdot n (x) \, \dd \mathcal{H}^{d-1} (x) ,
		\end{split}
	\end{equation}
	where $n$ is the unit exterior normal of $\p \O$.
	Clearly, if \eqref{eq:dividentitiesbarO} holds for every $\phi \in C^1 (\bar{\O})$ then \eqref{eq:dividentitiesO} holds for every $\phi \in C^1_c (\O)$.
	The geometric meaning of maps satisfying \eqref{eq:dividentitiesO} was shown in \cite{Henao-Mora, HeMo12} to be that they do not present cavitation or create new surface.
	Moreover, we know from \cite{Barchiesi} that they enjoy a great part of the regularity properties that maps in $W^{1,p}$ with $p> n$ do.
	Furthermore, the examples of \cite{Henao-Mora-Oliva} suggest that property \eqref{eq:dividentitiesbarO} implies that cavitation of $u$ is also excluded at the boundary.
	
	\begin{defi}\label{definition Ap barra}
		Let $p\geq d-1$.
		The class $\overline{\mathcal{A}}_{p}\left(\Omega\right)$ consists of those maps $u\in W^{1,p}(\Omega;\Rd)\cap W^{1,p}(\partial\Omega;\Rd)$ such that $\det Du\in L^{1}\left(\Omega\right)$ and \eqref{eq:dividentitiesbarO} holds for all $\phi\in C^{1}(\overline{\Omega})$ and $g\in C_{c}^{1}(\Rd;\Rd)$.
	\end{defi}
	
	The following result is \cite[Proposition 8.4]{Henao-Mora-Oliva} and constitutes an important step to prove injectivity of maps.
	
	\begin{prop}\label{proposition 8.4}Let $p>d-1$. If $u\in\overline{\mathcal{A}}_{p}(\Omega)$ with $\det Du\geq0$ a.e., then $\deg(u;\Omega;\cdot)=\mathcal{N}_{u}$ a.e., $\imG(u;\Omega)=\imT(u;\Omega)$ a.e.\ and $u\in L^{\infty}(\Omega;\Rd)$.\end{prop}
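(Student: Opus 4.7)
The strategy is to test the divergence identity \eqref{eq:dividentitiesbarO} with the simplest allowable choice, $\phi \equiv 1$ and arbitrary $g \in C_c^1(\Rd;\Rd)$, producing
\begin{equation*}
	\int_{\O} \det Du(x) \, \divergencia g(u(x)) \, \dd x = \int_{\p \O} \left[ \adj D^{\tau} u(x) \, g(u(x)) \right] \cdot n(x) \, \dd \mathcal{H}^{d-1}(x),
\end{equation*}
and then to evaluate each side independently. Federer's area formula (Proposition \ref{proposicion de Omega0}), in which the sign assumption $\det Du \geq 0$ lets us drop the absolute value, identifies the left-hand side with $\int_{\Rd} \divergencia g(y) \, \mathcal{N}_{u}(y) \, \dd y$. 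The right-hand side depends only on the trace $u_{|\p\O}$, which by the Sobolev embedding $W^{1,p}(\p\O) \hookrightarrow C(\p\O)$ (valid since $p > d-1$) admits a continuous representative, so Brouwer's degree $\deg(u;\O;\cdot)$ is well defined on $\Rd \setminus u(\p\O)$. For such continuous traces the classical identity---provable by approximation by smooth maps combined with Gauss--Green and the Piola identity $\divergencia \adj Du = 0$---rewrites the right-hand side as $\int_{\Rd} \divergencia g(y) \, \deg(u;\O;y) \, \dd y$.

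Equating the two evaluations gives
\begin{equation*}
	\int_{\Rd} \divergencia g(y) \bigl( \mathcal{N}_u(y) - \deg(u;\O;y) \bigr) \, \dd y = 0 \quad \text{for every } g \in C_c^1(\Rd;\Rd),
\end{equation*}
which forces $\mathcal{N}_u - \deg(u;\O;\cdot)$ to have vanishing distributional gradient on $\Rd$ and hence to equal a constant. Since $\mathcal{N}_u \in L^1(\Rd)$ (with integral $\int_\O \det Du$) and $\deg(u;\O;\cdot)$ vanishes on the unbounded component of $\Rd \setminus u(\p\O)$, the constant must be $0$, establishing the first claim. The identity $\imG(u;\O) = \imT(u;\O)$ a.e.\ follows essentially by inspection: $\imG(u;\O) = \{\mathcal{N}_u > 0\}$, $\imT(u;\O) = \{y \notin u(\p\O) : \deg(u;\O;y) \neq 0\}$, the set $u(\p\O)$ has $\mathcal{L}^d$-measure zero (as the continuous image of a compact $(d-1)$-dimensional set), and $\mathcal{N}_u \geq 0$ combined with the a.e.\ equality forces $\deg(u;\O;\cdot) \geq 0$ a.e.

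The hardest step is expected to be the $L^\infty$ bound, because the a.e.\ equality $\imG(u;\O) = \imT(u;\O)$ does not by itself prevent $u$ from sending a set of positive $\mathcal{L}^d$-measure in $\O$ into the measure-zero discrepancy: one only deduces that $\det Du = 0$ a.e.\ on such a preimage, which is compatible with $\det Du \geq 0$. To close this gap, one should test \eqref{eq:dividentitiesbarO} a second time, now with general $\phi \in C^1(\bar{\O})$ and with $g \in C_c^1(\Rd;\Rd)$ supported outside a ball $B(0,R)$ containing $u(\p\O)$: the boundary integral then vanishes, and combining the resulting identity with the first claim already established and with the Piola identity should force the set $\{x \in \O : u(x) \notin B(0,R)\}$ to be $\mathcal{L}^d$-null. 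An alternative route is to invoke the fine-representative theory for class $\ApO$ from \cite{Barchiesi}, which provides a representative satisfying $u(x) \in \overline{\imT(u;\O)}$ for a.e.\ $x \in \O$; the $L^\infty$ bound then follows since $\imT(u;\O)$ is bounded.
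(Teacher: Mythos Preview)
The paper does not prove this proposition: it is quoted verbatim from \cite[Proposition~8.4]{Henao-Mora-Oliva}, so there is no in-paper argument to compare against. Evaluating your sketch on its own merits:

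Your argument for $\deg(u;\O;\cdot)=\mathcal{N}_u$ a.e.\ and $\imG(u;\O)=\imT(u;\O)$ a.e.\ is the standard one and is essentially correct, with one caveat. The claim that $u(\p\O)$ has $\mathcal{L}^d$-measure zero because it is ``the continuous image of a compact $(d-1)$-dimensional set'' is not a valid justification---continuous images can be space-filling. The correct reason is that $u_{|\p\O}\in W^{1,p}(\p\O;\Rd)$ with $p>d-1$, so the area formula on the $(d-1)$-manifold $\p\O$ gives $\mathcal{H}^{d-1}(u(\p\O))<\infty$ and hence $\mathcal{L}^d(u(\p\O))=0$.

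The $L^\infty$ step, however, has a genuine gap. Your first proposed route---testing \eqref{eq:dividentitiesbarO} with $g$ supported outside a ball $B(0,R)\supset u(\p\O)$---does not close as written. On $E=\{|u|>R\}$ you already know $\det Du=0$ a.e.\ (from the first part), so the identity collapses to $\int_E[\adj Du\,g(u)]\cdot D\phi\,\dd x=0$ for all $\phi\in C^1(\bar\O)$; this says only that a certain vector field is divergence-free, and there is no mechanism in sight that forces $\mathcal{L}^d(E)=0$. Your fallback route---invoking the fine-representative theory for $\ApO$ from \cite{Barchiesi} to place $u(x)$ in $\overline{\imT(u;\O)}$---is in fact the route taken in \cite{Henao-Mora-Oliva}, but it is not a one-line citation: one needs the precise representative, the family of ``good'' radii, and the monotonicity $\imT(u;B)\subset\imT(u;\O)$ for small balls $B\subset\O$, which in turn relies on the boundary divergence identity defining $\bApO$. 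As written, your proposal identifies the difficulty correctly but does not resolve it.
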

	
	\section{Injectivity of maps in $\overline{\mathcal{A}}_{p} \cap \AIB$}\label{se:injectivity}
	
	Our aim is to prove a refined version of the following theorem, which can be found in \cite[Theorem 9.1]{Henao-Mora-Oliva}.
	
	\begin{teorema}\label{teorema 9.1} Let $p>d-1$ and let $\Omega\subset \Rd$ be a bounded Lipschitz open set. Let $u,u_{0}\in\overline{\mathcal{A}}_{p}(\Omega)$ satisfy $u_{|\partial\Omega}=u_{0|\partial\Omega}$, $\det Du>0$ a.e., $\det Du_{0}\geq0$ a.e.\ and $u_{0}$ is injective a.e. Then $u$ is injective a.e.\ and $\imG(u;\Omega)=\imG(u_{0};\Omega)$ a.e.\end{teorema}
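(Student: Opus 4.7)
The plan is to combine Proposition \ref{proposition 8.4} with Federer's area formula (Proposition \ref{proposicion de Omega0}) to transfer the a.e.\ injectivity of $u_{0}$ to $u$, using the fact that maps sharing the same boundary trace induce identical Brouwer degrees.

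First, since $u_{|\partial\Omega}=u_{0|\partial\Omega}$ and the degree on $\Omega$ of a $W^{1,p}$ map is defined through any continuous extension, I would conclude $\deg(u;\Omega;y)=\deg(u_{0};\Omega;y)$ for every $y\notin u_{|\partial\Omega}(\partial\Omega)$, and in particular $\imT(u;\Omega)=\imT(u_{0};\Omega)$. Both $u$ and $u_{0}$ belong to $\bApO$ with nonnegative Jacobian a.e., so Proposition \ref{proposition 8.4} applies to each of them and gives
\[
\mathcal{N}_{u}=\deg(u;\Omega;\cdot)=\deg(u_{0};\Omega;\cdot)=\mathcal{N}_{u_{0}}\quad\text{a.e.,}
\]
together with $\imG(u;\Omega)=\imT(u;\Omega)=\imT(u_{0};\Omega)=\imG(u_{0};\Omega)$ a.e., which immediately settles the assertion about geometric images.

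Next, the a.e.\ injectivity of $u_{0}$ is translated into $\mathcal{N}_{u_{0}}\leq 1$ a.e. Indeed, if $\widetilde{\Omega}\subset\Omega$ is the full-measure set on which $u_{0}$ is injective, then $\Omega_{0}\cap\widetilde{\Omega}$ is still of full measure and can be taken as the reference set in Proposition \ref{proposicion de Omega0} (the resulting multiplicity function agrees with $\mathcal{N}_{u_{0}}$ a.e., by the remark following the definition of geometric image). With this choice, the count is pointwise $\leq 1$. Combined with the previous step, $\mathcal{N}_{u}\leq 1$ a.e.

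The main obstacle — really the only step with content — is to upgrade the pointwise-a.e.\ bound $\mathcal{N}_{u}\leq 1$ into honest injectivity of $u$ on a set of full measure, and this is precisely where the strict assumption $\det Du>0$ (and not merely $\geq 0$) is essential. Setting $B\coloneqq\{y\in\Rd:\mathcal{N}_{u}(y)\geq 2\}$ and $A\coloneqq u^{-1}(B)\cap\Omega_{0}$, I have $\mathcal{L}^{d}(B)=0$ and $\mathcal{N}_{u,A}$ supported in $B$, so the area formula yields
\[
\int_{A}\det Du(x)\,\dd x=\int_{\Rd}\mathcal{N}_{u,A}(y)\,\dd y=\int_{B}\mathcal{N}_{u,A}(y)\,\dd y=0.
\]
Strict positivity of $\det Du$ a.e.\ then forces $\mathcal{L}^{d}(A)=0$, and $u$ is injective on the full-measure set $\Omega_{0}\setminus A$, completing the plan.
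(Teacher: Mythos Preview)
The paper does not actually prove this theorem; it is quoted verbatim from \cite[Theorem 9.1]{Henao-Mora-Oliva} and used only as a benchmark for the paper's own Theorem \ref{teorema 9.1 revisited}. So there is no in-paper proof to compare against directly.

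That said, your argument is correct and is exactly in the spirit of what the paper does for the variant it \emph{does} prove. In Theorem \ref{teorema 9.1 revisited} the steps are: apply Proposition \ref{proposition 8.4} to identify $\mathcal{N}_{u}$ with the degree, obtain $\mathcal{N}_{u}=1$ a.e.\ on the image, and then invoke the Lusin $N^{-1}$ property (coming from $\det Du>0$ a.e.) to pass from $\mathcal{N}_{u}\leq 1$ to genuine a.e.\ injectivity. Your proof follows the same three-stage skeleton; the only difference is the \emph{source} of the inequality $\mathcal{N}_{u}\leq 1$. You get it by comparing degrees with the injective map $u_{0}$ sharing the same trace, while the paper's variant gets it from the $\AIB$ hypothesis via Theorem \ref{teorema 4.2 revisited}. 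Your final paragraph, where you show $\mathcal{L}^{d}(u^{-1}(B)\cap\Omega_{0})=0$ via the area formula and strict positivity of the Jacobian, is precisely the content of the $N^{-1}$ remark the paper cites from \cite[Remark 2.3 (b)]{Bresciani2024}.
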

	
	To be precise, our main goal is to avoid the a.e.\ injectivity assumption in $\Omega$ of the boundary value $u_{0}$, replacing it with condition $\AIB$.
	
	We first state a version of the continuity of the degree.
	
	\begin{prop}\label{limite de funciones comparte grado}
		Let $\{u_{k}\}_{k\in\N}\subset C(\partial\Omega;\Rd)$ be a sequence such that $u_{k}\rightarrow u$ uniformly on $\partial\Omega$ as $k\to\infty$. Then for every $y\in\imT(u;\Omega)$ there exists some $k_{0}\in\N$ such that $\deg(u_{k};\Omega;y)=\deg(u;\Omega;y)$ for all $k\geq k_{0}$.
	\end{prop}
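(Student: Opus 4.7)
The plan is to use the standard stability of Brouwer's degree under small perturbations at the boundary, together with homotopy invariance.

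First I would fix $y \in \imT(u;\Omega)$. By the very definition of the topological image, $y \notin u(\partial \Omega)$. Since $\partial \Omega$ is compact (as $\Omega$ is bounded) and $u$ is continuous, $u(\partial \Omega)$ is a compact subset of $\Rd$ not containing $y$, so the distance $\delta \coloneqq \dist(y, u(\partial \Omega))$ is strictly positive. Using the uniform convergence $u_k \to u$ on $\partial \Omega$, I pick $k_0 \in \N$ such that $\sup_{x \in \p \O} |u_k(x) - u(x)| < \delta/2$ for every $k \geq k_0$. For any such $k$, the triangle inequality gives $|u_k(x) - y| \geq |u(x) - y| - |u_k(x) - u(x)| > \delta/2$ for all $x \in \partial \Omega$, so $y \notin u_k(\partial \Omega)$ and the degree $\deg(u_k; \Omega; y)$ is well defined.

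Next I would invoke Tietze's extension theorem (or any continuous extension result for Lipschitz domains) to obtain continuous extensions $\tilde u, \tilde u_k \in C(\overline{\Omega}; \Rd)$ of $u$ and $u_k$, respectively. Recall that, as noted in the paragraph preceding the statement, the degree on $\Omega$ of a continuous map on $\partial \Omega$ is defined as the degree of any continuous extension to $\overline{\Omega}$, so that $\deg(u;\Omega;y) = \deg(\tilde u; \Omega; y)$ and analogously for $u_k$.

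To conclude I would run the affine homotopy $H: \overline{\Omega} \times [0,1] \to \Rd$ defined by $H(x,t) \coloneqq (1-t)\, \tilde u(x) + t\, \tilde u_k(x)$. This $H$ is continuous, and on the boundary
\[
    |H(x,t) - y| = |(1-t)(u(x)-y) + t(u_k(x) - y)| \geq \delta - t\, |u_k(x) - u(x)| > \delta/2
\]
for every $(x,t) \in \partial \Omega \times [0,1]$, so $H(\cdot, t)$ never attains the value $y$ on $\partial \Omega$. By the homotopy invariance of Brouwer's degree, the map $t \mapsto \deg(H(\cdot, t); \Omega; y)$ is constant, yielding $\deg(u_k; \Omega; y) = \deg(u; \Omega; y)$ for every $k \geq k_0$.

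There is no real obstacle here: the argument is essentially a textbook application of the definition of degree via extension and the classical homotopy invariance, once we observe that uniform closeness at the boundary translates, via the triangle inequality, into a uniform positive distance of the straight-line homotopy to the point $y$.
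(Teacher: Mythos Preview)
Your proof is correct and follows essentially the same approach as the paper. The paper's proof is a two-line appeal to ``the uniform convergence of $\{u_k\}$ and the continuity of the degree,'' whereas you have unpacked that continuity statement into its underlying ingredients (positive distance to $u(\partial\Omega)$, Tietze extension, affine homotopy, homotopy invariance); the content is the same.
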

	
	\begin{proof}
		Let $y\in\imT(u;\Omega)$, so $y\notin u(\partial\Omega)$.
		By the uniform convergence of $\{u_{k}\}_{k\in\N}$ and the continuity of the degree, there exists some $k_{0}\in\N$ such that $y\notin u_{k}(\partial\Omega)$ for every $k\geq k_{0}$ and $\deg(u_{k};\Omega;y)=\deg(u;\Omega;y)$ for every $k\geq k_{0}$.
	\end{proof}
	
	With this, we proceed to prove the theorem.
	
	\begin{teorema}\label{teorema 9.1 revisited}
		Let $p>d-1$ and let $\Omega\subset\Rd$ be a bounded Lipschitz open set such that $\Rd\setminus\partial\Omega$ has exactly two connected components. Let $u\in\overline{\mathcal{A}}_{p}(\Omega)\cap\AIB$ and assume that $\det Du > 0$ a.e.
		Then $u$ is injective a.e.\ in $\Omega$ and $\imG(u;\Omega)=\imT(u;\Omega)$ a.e.
	\end{teorema}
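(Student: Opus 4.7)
My plan is to combine the three major tools already assembled in the paper: the degree/multiplicity identity for maps in $\overline{\mathcal{A}}_{p}(\Omega)$ (Proposition \ref{proposition 8.4}), the constancy of the sign of the degree on the topological image for maps in $\AIB$ (Theorem \ref{teorema 4.2 revisited}), and Federer's area formula (Proposition \ref{proposicion de Omega0}). Together, these force the multiplicity function $\mathcal{N}_{u}$ to take only the values $0$ and $1$ almost everywhere, which is equivalent to injectivity a.e.

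First I would invoke Proposition \ref{proposition 8.4}, which applies because $\det Du \geq 0$ a.e.\ (it is in fact strictly positive), to obtain that $\deg(u;\Omega;\cdot)=\mathcal{N}_{u}$ a.e.\ in $\Rd$ and that $\imG(u;\Omega)=\imT(u;\Omega)$ a.e. Next, since $u\in\AIB$ and $\Rd\setminus\partial\Omega$ has exactly two components, Theorem \ref{teorema 4.2 revisited} gives a constant $\gamma\in\{\pm 1\}$ with $\deg(u;\Omega;y)=\gamma$ for every $y\in\imT(u;\Omega)$.

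The key sign-fixing step is to show $\gamma=1$. By the area formula applied with $A=\Omega$ and $\varphi\equiv 1$,
\[
\int_{\Omega}\det Du(x)\,\dd x=\int_{\Rd}\mathcal{N}_{u}(y)\,\dd y,
\]
and the left-hand side is strictly positive because $\det Du>0$ a.e.\ on the nonempty open set $\Omega$. In particular, $\mathcal{N}_{u}>0$ on a set of positive measure, and since $\mathcal{N}_{u}(y)=\gamma$ for a.e.\ $y\in\imT(u;\Omega)=\imG(u;\Omega)$, the only possibility is $\gamma=1$. Outside $\imT(u;\Omega)$ the degree vanishes, so $\mathcal{N}_{u}\in\{0,1\}$ a.e.\ in $\Rd$.

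Finally, to extract actual injectivity a.e.\ in $\Omega$, let $\Omega_{0}$ be as in Proposition \ref{proposicion de Omega0} and set $E=\{y\in\Rd:\mathcal{N}_{u}(y)\notin\{0,1\}\}$, which has Lebesgue measure zero. Using the area formula again,
\[
\int_{\Omega_{0}\cap u^{-1}(E)}\det Du(x)\,\dd x=\int_{E}\mathcal{N}_{u,\Omega_{0}\cap u^{-1}(E)}(y)\,\dd y=0,
\]
so from $\det Du>0$ a.e.\ we get $\mathcal{L}^{d}(\Omega_{0}\cap u^{-1}(E))=0$. Defining $\widetilde{\Omega}\coloneqq\Omega_{0}\setminus u^{-1}(E)$, we have $\mathcal{L}^{d}(\Omega\setminus\widetilde{\Omega})=0$, and for every $y\in u(\widetilde{\Omega})$ the value $\mathcal{N}_{u}(y)$ is a positive integer lying in $\{0,1\}$, hence equal to $1$; this means each such $y$ has a unique preimage in $\Omega_{0}$, so $u$ is injective on $\widetilde{\Omega}$. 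The equality $\imG(u;\Omega)=\imT(u;\Omega)$ a.e.\ is already contained in Proposition \ref{proposition 8.4}. The only subtle point is the sign determination $\gamma=1$, which is where the compatibility between the analytic hypothesis $\det Du>0$ and the topological hypothesis $u\in\AIB$ is genuinely used; every other step is essentially bookkeeping with Federer's formula.
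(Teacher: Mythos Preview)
Your proof is correct and rests on the same three ingredients as the paper's (Proposition~\ref{proposition 8.4}, Theorem~\ref{teorema 4.2 revisited}, and the area formula/Lusin $N^{-1}$), but the execution differs in one notable respect. The paper applies Theorem~\ref{teorema 4.2 revisited} to each member $u_k$ of the approximating injective sequence and then uses the stability of the degree under uniform convergence (Proposition~\ref{limite de funciones comparte grado}) to transfer the conclusion $\deg=1$ back to $u$. You instead apply Theorem~\ref{teorema 4.2 revisited} directly to $u$ itself---which is legitimate, since $u\in\AIB$ and $u_{|\partial\Omega}\in W^{1,p}(\partial\Omega;\Rd)\hookrightarrow C(\partial\Omega;\Rd)$ for $p>d-1$---and then fix the sign $\gamma=1$ via the nonnegativity of $\mathcal{N}_u$ and the strict positivity of $\int_\Omega\det Du$. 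This is slightly more economical: it eliminates the need for Proposition~\ref{limite de funciones comparte grado} altogether. Your area-formula argument also handles the edge case $\imT(u;\Omega)=\emptyset$ (which would leave $\gamma$ undefined) by forcing a contradiction with $\det Du>0$. The final passage from $\mathcal{N}_u\in\{0,1\}$ a.e.\ to actual injectivity on a full-measure subset is exactly the Lusin $N^{-1}$ argument the paper cites, just written out explicitly.
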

	
	\begin{proof}
		Let $\{u_{k}\}_{k\in\N}\subset C(\partial\Omega;\Rd)$ be the sequence uniformly convergent to $u$ from Definition \ref{definition AIB}.
		By Theorem \ref{teorema 4.2 revisited} we have that $\deg(u_{k};\Omega;\cdot) = \gamma_k$ in $\imT(u_{k};\Omega)$ for some $\gamma_k \in\{\pm1\}$ and $k\in\N$.
		
		By Proposition \ref{proposition 8.4} and the fact that $\mathcal{N}_{u}$ is a non-negative function we can see that $\deg(u;\Omega;\cdot)\geq0$ a.e.\ in $\Rd\setminus u(\partial\Omega)$.
		In fact, by the continuity of the degree, $\deg(u;\Omega;\cdot) \geq 0$ everywhere in $\Rd\setminus u(\partial\Omega)$.
		In particular, $\deg(u;\Omega;\cdot) > 0$ everywhere in $\imT (u, \Omega)$.
		By Proposition \ref{limite de funciones comparte grado}, for every $y\in\imT(u;\Omega)$ there exists some $k_{0}\in\N$ such that $\deg(u_{k};\Omega;y)=\deg(u;\Omega;y)$ for all $k\geq k_{0}$, so $y \in \imT (u_k; \Omega)$ and $\deg(u_{k};\Omega;y)=1$.
		Therefore, 
		\[
		\deg(u;\Omega;\cdot)=1 \quad \text{in } \imT(u;\Omega) .
		\]
		By Proposition \ref{proposition 8.4} again,
		\begin{equation*}\label{N=1}
			\mathcal{N}_{u} = 1\text{\ a.e.\ in\ } \imG(u;\Omega).
		\end{equation*}
		As $\det Du > 0$ a.e., $u$ satisfies Lusin's $N^{-1}$ condition, i.e., the preimage of a subset of $\Rd$ with measure zero has measure zero (see, e.g., \cite[Remark 2.3 (b)]{Bresciani2024}).
		This implies that $u$ is injective a.e.
	\end{proof}
	
	Theorem \ref{teorema 9.1 revisited} is neither stronger nor weaker than Theorem \ref{teorema 9.1}. Indeed, Theorem \ref{teorema 9.1 revisited} does not request an a.e.\ injective map to coincide on $\partial\Omega$ with $u$ but needs for $\Rd\setminus\partial\Omega$ to have exactly two connected components.
	
	\section{Counterexample to global injectivity in $\mathcal{A}_{p} \cap \AIB$}\label{section counterexample}
	
	The family $\overline{\mathcal{A}}_{p}(\Omega)$, as opposed to $\mathcal{A}_p (\O)$ (see \cite{Barchiesi}), requires certain regularity at the boundary, as shown in \cite[Sect.\ 5]{Henao-Mora-Oliva}.
	Similarly, deformations in the family $\AIB$ enjoy some regularity at the boundary, as a limit of continuous injective mappings.
	Therefore, in the class $\overline{\mathcal{A}}_{p}(\Omega) \cap \AIB$ two regularity conditions are imposed on the boundary.
	In this section we show that both have to be assumed, in the sense that the conclusion of Theorem \ref{teorema 9.1 revisited} does not hold in the class $\mathcal{A}_{p}(\Omega)\cap\AIB$.
	
	The counterexample that we construct is a variant of \cite[Fig.\ 6]{Muller-Spector} (which was also used in \cite[Example 5.3]{Henao-Mora-Oliva}). 
	Let $\Omega=(-1,1)\times(0,1)$ be reference configuration, which is transformed under several deformations depicted in Figure \ref{contraejemplo final}.
	
	\begin{figure}[h]
		\centering
		\includegraphics[width=.625\textwidth]{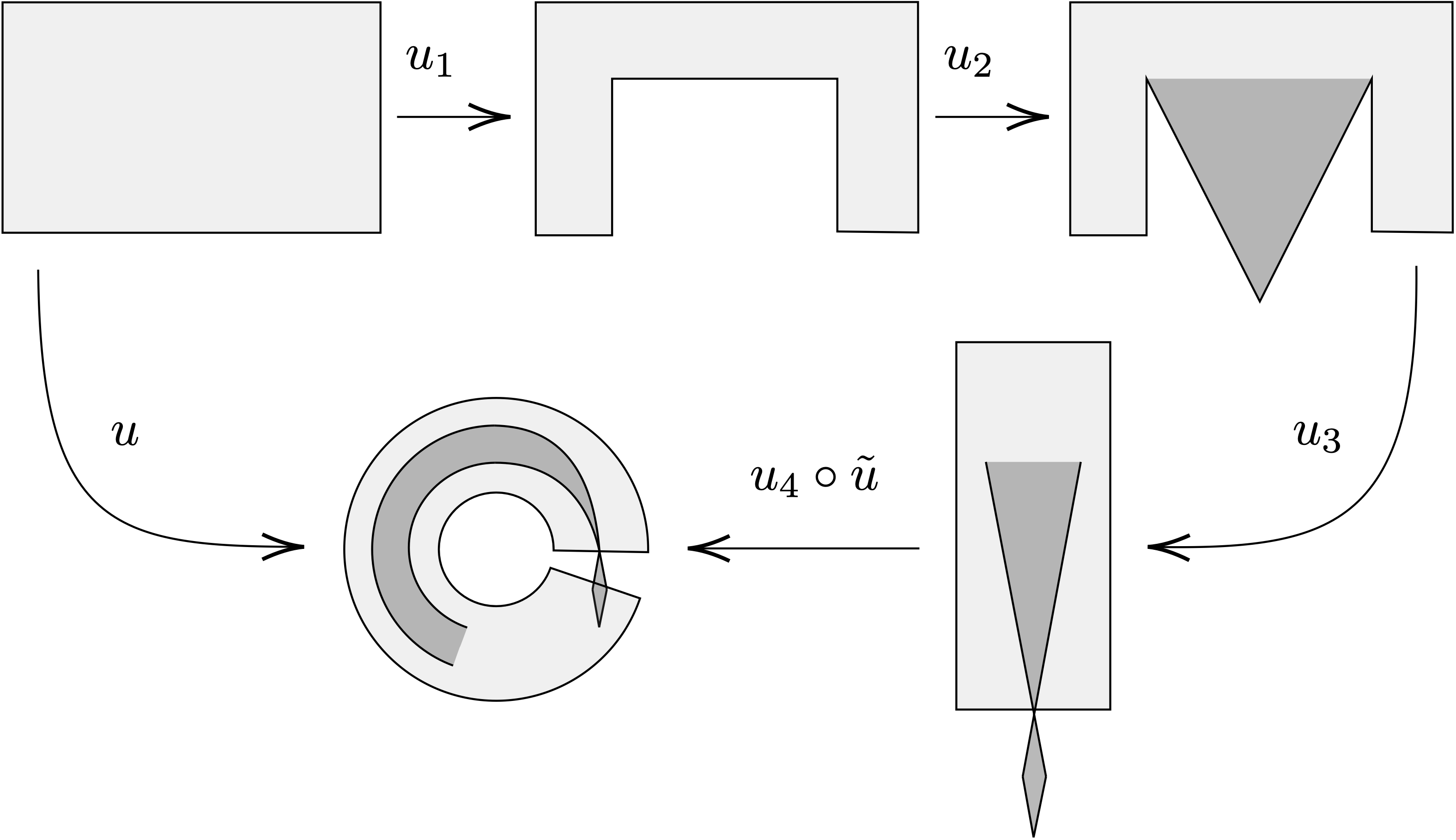}
		\caption{Counterexample to a.e.\ injectivity in $\mathcal{A}_{p}((-1,1)\times(0,1))\cap\AIB$.}
		\label{contraejemplo final}
	\end{figure}
	
	The first of these deformations is
	\begin{equation*}
		u_{1}:\Omega \rightarrow\R^{2} , \qquad u_1 (x) = \frac{\abs{x}_{\infty}+3}{4\abs{x}_{\infty}}x,
	\end{equation*}
	where $\abs{x}_{\infty}$ is the max-norm of the vector $x$.
	The map $u_1$ creates a cavity on the boundary of $\Omega$.
	The second deformation, $u_2:\R^{2}\rightarrow\R^{2}$, defined by 
	\[
	u_2(x_{1},x_{2}) =
	\begin{cases}
		(x_{1},1-(1-x_{2})(7-8\abs{x_{1}})) & \text{if}\ \abs{x_1}<\frac{3}{4},\\
		(x_{1},x_{2}) & \text{if}\ \abs{x_1}\geq\frac{3}{4},
	\end{cases}
	\]
	grips the material near the surface of the cavity and stretches it down.
	The third deformation, $u_3:\R^{2}\rightarrow\R^{2}$, closes the cavity leaving some part of the material outside the boundary and then rescales the main part of the body to fit the rectangle $[1,2]\times[-1,1]$; the leaked part of the material follows the same rescaling.
	This third deformation is defined by
	\[
	u_3(x_{1},x_{2}) =
	\begin{cases}
		(\frac{1}{2}(\sign x_{1}(1-4(1-\abs{x_{1}})(1-x_{2}))+3),2x_{2}-1) & \text{if}\ 0\leq x_{2}<\frac{3}{4},\ \frac{3}{4}<\abs{x_1},
		\\ (\frac{4x_{1}x_{2}}{4x_{2}+3}+\frac{3}{2},2x_{2}-1)& \text{if}\ \abs{x_{1}}<\frac{4x_{2}+3}{8},\ 0\leq x_{2}<\frac{3}{4},
		\\ (\frac{-4x_{1}x_{2}}{4x_{2}+3}+\frac{3}{2},2x_{2}-1)& \text{if}\ \abs{x_{1}}<\frac{4x_{2}+3}{8},\ -\frac{1}{4}\leq x_{2}<0,
		\\(\frac{x_{1}+3}{2},2x_{2}-1) & \text{elsewhere}.
	\end{cases}
	\]
	For the last map of the deformation, we first change to polar coordinates on the right half-plane $\widetilde{u}:(0, \infty) \times \R \rightarrow\R^{2}$ given by $\widetilde{u}(x_{1},x_{2})=(\sqrt{x_{1}^{2}+x_{2}^{2}}, \arctan(x_{2}/x_{1}))$ and define
	\begin{equation*}
		u_{4}: \R^{2} \rightarrow\R^{2} , \qquad u_{4}(r,\theta) = (r \cos( \alpha \theta ), r \sin ( \alpha \theta ))
	\end{equation*}
	for some $\frac{4\pi}{5}<\alpha<\pi$. The map $u_{4}\circ\widetilde{u}$ is a revolution of the ``leaking rectangle'' around the point $0\in\R^{2}$ and creates an overlaping surface between the leaked part of the material and the top part of $\Omega$ under the previous deformations. Therefore, the deformation $u=u_{4}\circ\widetilde{u}\circ u_{3}\circ u_{2}\circ u_{1}$ is not injective a.e.
	
	Arguing as in \cite[Examples 5.2 and 5.3]{Henao-Mora-Oliva}, one can show that the map $u$ is in $\mathcal{A}_p (\O)$ for any $1 \leq p < 2$, but not in $\bApO$.
	In addition, $\det D u > 0$ a.e.
	Finally, $u_{|\partial\Omega}=u_{0|\partial\Omega}$ for some diffeomorphism $u_{0}:\overline{\Omega}\rightarrow\R^{2}$, so in particular $u \in \AIB$.
	Indeed, this was shown in \cite{Muller-Spector,Henao-Mora-Oliva} for the map $u_{3}\circ u_{2}\circ u_{1|\partial\Omega}$, while the part $u_{4} \circ \widetilde{u}$ of the deformation maintains the same property.
	
	The key point allowing the loss of injectivity is that the cavitation at the boundary permits a lack of the monotonicity of the degree with respect to the domain; that is, for many open sets $U \subset \O$ it is not true that $\deg(u;\Omega;\cdot)\geq\deg(u; U;\cdot)$.
	Explicit instances of such $U$ are $(-1 + \delta, 1- \delta) \times (\delta, 1 - \delta)$ for $\delta > 0$ small.
	
	\section{Weak continuity of minors of tangential derivatives}\label{seccion algebra multi}
	
	The objective of the rest of this article is to show the existence of minimizers of an appropriate functional in the class $\overline{\mathcal{A}}_{p}(\Omega)\cap\AIB$.
	For this, we will show the weak continuity of the minors of $D^{\tau}u$ in $W^{1,p}(\partial\Omega;\Rd)$.
	The map $D^{\tau}u$ is the tangential derivative of $u \in W^{1,p}(\partial\Omega;\Rd)$, which sends $\mathcal{H}^{d-1}$-a.e.\ $x \in \partial\Omega$ to $D^{\tau}u(x) \in \mathcal{L}(T_{x}\partial\Omega;\Rd)$.
	Here, $T_{x}\partial\Omega$ is the tangent space of $\p \O$ at $x$.
	We also denote by $T\partial\Omega =\{(x,v):x\in\partial\Omega, \, v \in T_{x}\partial\Omega \}$ the tangent bundle of $\partial\Omega$, and define $T^{d}\partial\Omega\coloneqq\{(x,F):x\in\partial\Omega,F\in (T_{x}\partial\Omega)^{d}\}$.
	
	\subsection{Minors of linear maps}\label{subsec: minors fixed x}
	
	Let $V\subseteq\Rd$ be an $m$-dimensional vector space, for some number $1\leq m\leq d$, and let $1\leq k\leq m$ be an integer.
	Let $L \in \mathcal{L} (V; \Rd)$, fix a basis in $V$ and consider the matrix representation of $L$ with respect to that basis in $V$ and the canonical basis in $\Rd$.
	Given $1 \leq i_1 < \cdots < i_k \leq d$ and $1 \leq j_1 < \cdots < j_k \leq m$, we denote by
	\[
	M_{\substack{i_{1}\ldots i_{k} \\ j_{1}\ldots j_{k}}}(L)
	\]
	the minor of order $k$ resulting by the choice of rows $i_{1},\ldots,i_{k}$ and columns $j_{1},\ldots,j_{k}$ in the matrix representation of $L$.
	There are $\binom{m}{k}\binom{d}{k}$ minors of $L$ of order $k$, and $\sum_{k=1}^{m}\binom{m}{k}\binom{d}{k}$ minors of $L$ of any order. We will denote this last number by $\nu_{m}$ and we will use the convention that $\nu_{0}=0$; this notation does not indicate the dependence on $d$, since $d$ is fixed throughout the article.
	Particularly important are $\nu_{d}$, the number of minors of any $d\times d$ matrix, and $\nu_{d-1}$, the number of minors of any $d \times (d-1)$ matrix. This notation will be of use in Sections \ref{se:tangential} and \ref{sec: Interface polyconvexity}.
	
	Let $k\leq m$.
	We define $M_{k}(L)$ as the ordered sequence of all minors of order $k$ of $L$, $M_{k}^{0}(L)$ as the ordered sequence of the minors of order $k$ of $L$ not involving the last column of $L$, and $M_{k}^{1}(L)$ as the ordered sequence of the minors of order $k$ of $L$ involving the last column of $L$, all with respect to the matrix representation of $L$. Thus, 
	\begin{gather*}%\label{eq: order of minors}
		M_{k}(L) \coloneqq \left( M_{\substack{ i_{1},\ldots,i_{k} \\ j_{1},\ldots,j_{k} } }(L) \right)_{ 1\leq j_{1}<\cdots<j_{k}\leq m }^{ 1\leq i_{1}<\cdots<i_{k}\leq d } , \\
		M_{k}^{0} (L) 
		\coloneqq 
		\left( M_{\substack{ i_{1},\ldots,i_{k} \\ j_{1},\ldots,j_{k} } }(L) \right)_{ 1\leq j_{1}<\cdots<j_{k}\leq m -1}^{ 1\leq i_{1}<\cdots<i_{k}\leq d }
		\quad 
		\text{and}
		\quad
		M_{k}^{1} (L) 
		\coloneqq 
		\left( M_{\substack{ i_{1},\ldots,i_{k} \\ j_{1},\ldots,j_{k-1}, m } }(L) \right)_{ 1\leq j_{1}<\cdots<j_{k-1}\leq m -1}^{ 1\leq i_{1}<\cdots<i_{k}\leq d }.
	\end{gather*}
	Moreover, let $\{v_{1},\ldots,v_{d}\}$ be a basis of $\Rd$ such that $\{v_{1},\ldots,v_{d-1}\}$ is a basis of some subspace $W\subset\Rd$ and let $F\in\mathcal{L}(\Rd;\Rd)$; then,
	\begin{equation}\label{eq:M0kMk}
		M^0_k (F) = M_k (F_{|W}). 
	\end{equation}
	
	Finally, we define the sequence of all minors of any order of $L$ as
	\begin{equation}\label{eq: string of minors ordered}
		M(L) \coloneqq \left( M_{1}^{0}(L), \ldots, M_{m}^{0}(L), M_{1}^{1}(L), \ldots, M_{m}^{1}(L) \right) ,
	\end{equation}
	with the same convention. For the sake of notation, we also define 
	\begin{equation}\label{eq:M0M1}
		M^{0}(L) \coloneqq \left( M_{1}^{0}(L), \ldots, M_{m}^{0}(L) \right)
		\qquad
		\text{and}
		\qquad
		M^{1}(L) \coloneqq \left( M_{1}^{1}(L), \ldots, M_{m}^{1}(L) \right).
	\end{equation}
	Following the previous notation, we have that $M (L) \in \R^{\nu_m}$.
	Moreover, when $m = d$, the last component of $M (L)$ is $\det(L)$.
	
	We will use the same notation for the minors if $L$ is a given matrix instead of a linear map.
	
	\subsection{Convergence of minors of tangential derivatives}\label{subsec: minors moving x}
	
	\begin{defi}\label{defi: measurable basis}
		We say that $\{ v_1 , \ldots, v_{d-1} \}$ is a measurable basis of $T\partial\Omega$ if $v_i : \p \O \to \Rd$, for $i = 1, \ldots, d-1$, is a measurable map and $\{ v_1 (x), \ldots, v_{d-1} (x) \}$ is a basis of $T_x {\p \O}$ for $\mathcal{H}^{d-1}$-a.e.\ $x \in \partial\Omega$.
		The measurable basis is called orthonormal if so is $\{ v_1 (x), \ldots, v_{d-1} (x) \}$ for $\mathcal{H}^{d-1}$-a.e.\ $x \in \partial\Omega$.
	\end{defi}
	
	When such basis is fixed we can consider $D^{\tau} u$ as a map from $\partial \Omega$ to $\R^{d\times(d-1)}$, and $M(D^{\tau} u)$ as a map from $\partial \Omega$ to $\R^{\nu_{d-1}}$.
	Moreover, we can choose the map $n : \p \O \to S^{d-1}$ defined as
	\[
	n(x) = \frac{ v_{1}(x)\wedge\cdots\wedge v_{d-1}(x)}{\|v_{1}(x)\wedge\cdots\wedge v_{d-1}(x)\|}
	\] 
	such that the vector $n(x)$ is the outward normal to $\O$ at $x$ and
	\begin{equation}\label{eq:vi3}
		\{ v_1 (x), \ldots, v_{d-1} (x) , n(x) \} \text{ is a basis of } \R^d.
	\end{equation}
	
	The following observation calculates the minors $M^1$ of a type of maps relevant in Sections \ref{se:tangential} and \ref{sec: Interface polyconvexity}.
	
	\begin{remark}\label{remark: existence of bases}
		Consider the basis \eqref{eq:vi3} of $\R^d$.
		If $L \in \mathcal{L} (\Rd; \Rd)$ satisfies $L n(x)=0$ then $M^{1}(L)=0$.
	\end{remark}

	\begin{defi}\label{def: L infinity basis}
		Let $\mathcal{V} = \{ v_{1},\ldots,v_{d-1} \}$ be a measurable basis of $T\partial\Omega$, for $\mathcal{H}^{d-1}$-a.e.\ $x \in \partial\Omega$ let $P_{x} : \R^{d-1} \rightarrow T_{x}\partial\Omega$ with $P_{x}e_{i}=v_{i}(x)$ for each $e_{i}$ in the canonical basis of $\R^{d-1}$. We say that $\mathcal{V}$ is an $L^{\infty}$ basis of $T\partial\Omega$ if there exists $\tilde{P}_{x}:\Rd \rightarrow \Rd$ a linear extension of $P_{x}$ such that $\tilde{P}_{x} , \tilde{P}_{x}^{-1} \in L^{\infty}(\partial\Omega ; \mathcal{L}(\Rd;\Rd))$ for $\mathcal{H}^{d-1}$-a.e.\ $x \in \partial\Omega$.
	\end{defi}
	
	We will use $\mathcal{V} = \{ v_{1},\ldots,v_{d-1} \}$ to refer to a basis of $T\partial\Omega$ and $\mathcal{V}_{x} = \{ v_{1}(x),\ldots,v_{d-1}(x) \}$ for a given $x\in\partial\Omega$ with the subindex notation, to refer to the associated basis of $T_{x}\partial\Omega$.
	
	We introduce the notation regarding the parametrization of $\p \Omega$ (see, e.g., \cite[Sect.\ 3]{Henao-Mora-Oliva}).
	Let $\pi:\Rd\rightarrow\R^{d-1}$ be projection on the first $d-1$ coordinates, and $\eta:\R^{d-1}\rightarrow\Rd$ the function $\hat{z}\mapsto(\hat{z},0)$.
	As $\O$ is a Lipschitz domain, there exist $r,\beta>0$, an integer $m_{0}\geq 1$ and bi-Lipschitz maps
	\begin{equation*}
		G_{i}:[0,r]^{d-1}\times[-\beta,\beta] \rightarrow\Rd , \qquad i \in \{ 1, \ldots , m_0 \}
	\end{equation*}
	such that, when one defines $\Gamma_{i} = G_i ((0, r)^{d-1} \times \{ 0 \})$, we have that $\{ \Gamma_{i} \}_{i=1}^{m_0}$ is an open cover of $\partial\Omega$.
	For each $i \in \{ 1, \ldots , m_0 \}$ we define the bi-Lipschitz map $\Psi_{i} \coloneqq G_{i}\circ \eta : [0,r]^{d-1} \rightarrow \Gamma_{i}$.
	For $\hat{z}\in[0,r]^{d-1}$, we consider the matrix representation of $D\Psi_{i}(\hat{z})$, with columns $D\Psi_{i}^{(j)}(\hat{z}) \in \Rd$ for $j=1, \ldots, d-1$, and the basis
	$
	\mathcal{B}_{\Psi_{i}(\hat{z})} \coloneqq \{D\Psi_{i}^{(1)}(\hat{z}),\ldots, D\Psi_{i}^{(d-1)}(\hat{z}) \}
	$
	of $T_{\Psi_{i}(\hat{z})}\Gamma_{i}$.
	We will use the notation $\mathcal{B}_{\Psi_{i}}$ whenever we use the basis $\mathcal{B}_{\Psi_{i}(\hat{z})}$ for every $\hat{z}\in [0,r]^{d-1}$.
	For any $u : \partial \Omega \to \R^d$, the functions
	\begin{equation*}%\label{definicion de funcion Li}
		\mathcal{L}_{i}(u):\pi(G_{i}^{-1}(\Gamma_{i})) \rightarrow \R^d , \quad \mathcal{L}_{i}(u) :=  u\circ \Psi_{i} , \qquad i \in \{ 1, \ldots , m_0 \}
	\end{equation*}
	satisfy the following property (see \cite[Lemma 3.3]{Henao-Mora-Oliva}).
	
	\begin{lema}\label{Lema de if and only if}
		Let $p\geq1$. For each $n\in\N$,
		\begin{enumerate}[label=(\roman*)]
			\item\label{Lema de if and only if (i)} let $u_{n},u\in W^{1,p}(\partial\Omega;\Rd)$. Then $u_{n}\rightharpoonup u$ in $W^{1,p}(\partial\Omega;\Rd)$ as $n\to\infty$ if and only if $\mathcal{L}_{i}(u_{n})\rightharpoonup\mathcal{L}_{i}(u)$ in $W^{1,p}((0,r)^{d-1};\Rd)$ as $n\to\infty$ for all $i=1,\ldots,m_{0}$.
			\item\label{Lema de if and only if (ii)} let $u_{n},u\in L^{p}(\partial\Omega;\Rd)$. Then $u_{n}\rightharpoonup u$ in $L^{p}(\partial\Omega;\Rd)$ as $n\to\infty$ if and only if $\mathcal{L}_{i}(u_{n})\rightharpoonup\mathcal{L}_{i}(u)$ in $L^{p}((0,r)^{d-1};\Rd)$ as $n\to\infty$ for all $i=1,\ldots,m_{0}$.
		\end{enumerate}
	\end{lema}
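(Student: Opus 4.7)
The plan is to reduce both parts to the fact that bounded linear operators between Banach spaces are weakly continuous, combined with a partition of unity argument for the converse direction. The key observation is that each $\Psi_i : (0,r)^{d-1} \to \Gamma_i$ is bi-Lipschitz, since it is the composition of the bi-Lipschitz map $G_i$ with the embedding $\eta$. By Rademacher's theorem, both $D\Psi_i$ and $D\Psi_i^{-1}$ are essentially bounded, so the pullback $v \mapsto v \circ \Psi_i$ is a bounded linear isomorphism from $L^p(\Gamma_i;\Rd)$ onto $L^p((0,r)^{d-1};\Rd)$ and, via the chain rule, from $W^{1,p}(\Gamma_i;\Rd)$ onto $W^{1,p}((0,r)^{d-1};\Rd)$.

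For the forward implication in both (i) and (ii), I would combine this with the fact that the restriction $u \mapsto u_{|\Gamma_i}$ is bounded linear from the global to the local space in either topology, because $\Gamma_i$ is relatively open in $\partial\Omega$ and the tangential derivative localizes. Hence $\mathcal{L}_i$ is the composition of two bounded linear operators, and the image of a weakly convergent sequence under such a composition is weakly convergent.

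For the converse, suppose that $\mathcal{L}_i(u_n) \rightharpoonup \mathcal{L}_i(u)$ for every $i$. Composing with the bounded inverse $v \mapsto v \circ \Psi_i^{-1}$ shows that $\{(u_n)_{|\Gamma_i}\}_n$ is norm bounded in the corresponding local space. Fixing a Lipschitz partition of unity $\{\chi_i\}_{i=1}^{m_0}$ subordinate to $\{\Gamma_i\}$ and writing $u_n = \sum_i \chi_i u_n$, the product rule yields a uniform bound of $\{u_n\}$ in the global space. For the convergence, given a test functional $\phi$ in the appropriate dual, decompose $\phi = \sum_i \chi_i \phi$ into contributions supported in each $\Gamma_i$. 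Pulling each contribution back via $\Psi_i$ converts its action on $u_n - u$ into an integral against $\mathcal{L}_i(u_n) - \mathcal{L}_i(u)$ weighted by $L^{\infty}$ factors coming from the Jacobian of $\Psi_i$ (and, in case (i), from the entries of $D\Psi_i^{-1}$ supplied by the chain rule). Each such integral tends to $0$ by the local weak convergence hypothesis, and summing over $i$ delivers the global weak convergence.

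I expect no genuine obstacle: the only technical subtlety is the careful handling of the chain rule between Sobolev spaces on the Lipschitz surface $\partial\Omega$ and on the flat cube $(0,r)^{d-1}$, together with the fact that the partition of unity $\chi_i$ contributes only $L^{\infty}$ multiplicative factors and Lipschitz derivatives when expanding the tangential derivative of $\chi_i u_n$, so that all the identifications remain inside the relevant duality pairings.
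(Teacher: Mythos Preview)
Your approach is correct and is the standard way to establish this kind of result: the pullback by the bi-Lipschitz chart $\Psi_i$ is a bounded linear isomorphism between the relevant function spaces, so the forward direction is immediate, and the converse follows by a partition of unity and change of variables exactly as you describe. The only point worth tightening is that in the $L^p$ converse you do not actually need the partition of unity to obtain boundedness first and convergence second; you can directly test against $\phi\in L^{p'}(\partial\Omega)$, split it as $\sum_i \chi_i\phi$, change variables in each piece, and conclude. The $W^{1,p}$ case is handled the same way once you note that the dual pairing can be written as integrals against $u_n$ and $D^\tau u_n$ separately, and the chain rule $D(\mathcal{L}_i(u_n)) = (D^\tau u_n\circ\Psi_i)\,D\Psi_i$ with $D\Psi_i$, $D\Psi_i^{-1}\in L^\infty$ lets you pass between the two.

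Regarding comparison with the paper: the paper does not give its own proof of this lemma. It is stated with a reference to \cite[Lemma~3.3]{Henao-Mora-Oliva} and used as a black box. So there is nothing in the paper to compare your argument against; your sketch simply supplies the routine details that the authors chose to outsource.
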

	
	Although there is an intrinsic definition of the spaces $W^{1,p}(\partial\Omega;\Rd)$ and $L^{p}(\partial\Omega;\Rd)$ and their convergences, we will always use them refering to the result above. We also have the following result regarding the basis $\mathcal{B}_{\Psi_{i}}$.
	
	\begin{lema}
		$\mathcal{B}_{\Psi_{i}}$ is an $L^{\infty}$ basis of  $T\Gamma_{i}$ for each $i\in\{1,\ldots,m_{0}\}$.
		Moreover, there exists an $L^{\infty}$ basis of  $T\partial \Omega$.
	\end{lema}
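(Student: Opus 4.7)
The plan is to handle the two assertions separately, producing the extension $\tilde{P}_x$ required by Definition \ref{def: L infinity basis} directly from the bi-Lipschitz chart $G_i$.

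For the first assertion, fix $i \in \{1,\ldots,m_0\}$ and $x = \Psi_i(\hat{z}) \in \Gamma_i$. The natural candidate is
\[
\tilde{P}_x := DG_i(\hat{z},0) \in \mathcal{L}(\R^d;\R^d).
\]
Since $\Psi_i = G_i \circ \eta$ and $D\eta = (I_{d-1}\,|\,0)^T$, the chain rule gives $D\Psi_i^{(j)}(\hat{z}) = DG_i^{(j)}(\hat{z},0)$ for $j = 1,\ldots,d-1$, so $\tilde{P}_x$ agrees with the map $P_x$ associated to $\mathcal{B}_{\Psi_i(\hat{z})}$ on the first $d-1$ canonical vectors; that is, $\tilde P_x$ is a linear extension of $P_x$. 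Because $G_i$ is bi-Lipschitz on its domain, Rademacher's theorem yields a uniform bound $\|DG_i\|_{L^\infty}, \|(DG_i)^{-1}\|_{L^\infty} \leq L_i$ for some $L_i > 0$. In particular $|\det DG_i| \geq L_i^{-d}$ a.e., so $\tilde{P}_x$ is invertible for a.e.\ $\hat{z}$ and $\|\tilde{P}_x\|, \|\tilde{P}_x^{-1}\|$ are essentially bounded as functions of $\hat{z}$. Since $\Psi_i$ is bi-Lipschitz, pushing forward via $x = \Psi_i(\hat{z})$ preserves essential boundedness and measurability with respect to $\mathcal{H}^{d-1}$ on $\Gamma_i$. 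This shows that $\mathcal{B}_{\Psi_i}$ is an $L^\infty$ basis of $T\Gamma_i$.

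For the second assertion, we glue the local bases over a measurable partition subordinate to the cover $\{\Gamma_i\}_{i=1}^{m_0}$. Set
\[
A_1 := \Gamma_1, \qquad A_i := \Gamma_i \setminus \bigcup_{j<i} \Gamma_j \quad (i = 2,\ldots,m_0),
\]
so that $\{A_i\}_{i=1}^{m_0}$ is a measurable partition of $\partial\Omega$ with $A_i \subset \Gamma_i$. For each $i$ and each $x \in A_i$ define $v_j(x) := D\Psi_i^{(j)}(\Psi_i^{-1}(x))$, $j=1,\ldots,d-1$, and let $\tilde{P}_x$ be the extension produced in the first part. Measurability of $v_j$ and of $x \mapsto \tilde{P}_x$ follows because on each $A_i$ they are composites of the measurable map $\hat{z} \mapsto DG_i(\hat{z},0)$ (defined $\mathcal{L}^{d-1}$-a.e.) with the bi-Lipschitz $\Psi_i^{-1}$. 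The essential bounds $\|\tilde{P}_x\|, \|\tilde{P}_x^{-1}\| \leq \max_i L_i$ hold $\mathcal{H}^{d-1}$-a.e.\ on $\partial\Omega$, giving an $L^\infty$ basis.

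No substantial obstacle is anticipated; the only technical point to verify is that the $L^\infty$ bounds on $DG_i$ transfer into $L^\infty$ bounds on $\partial\Omega$ with respect to $\mathcal{H}^{d-1}$, which follows from the bi-Lipschitz change of variables $\Psi_i : (0,r)^{d-1} \to \Gamma_i$ and the corresponding comparability between $\mathcal{H}^{d-1}\llcorner \Gamma_i$ and the pushforward of $\mathcal{L}^{d-1}$ under $\Psi_i$.
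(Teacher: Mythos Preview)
Your proposal is correct and follows essentially the same approach as the paper: you take $\tilde P_x = DG_i(\hat z,0)$ as the extension of $P_x$, use the bi-Lipschitz property of $G_i$ to obtain the $L^\infty$ bounds on $\tilde P_x$ and $\tilde P_x^{-1}$, and then patch the local bases over the measurable partition $A_i = \Gamma_i \setminus \bigcup_{j<i}\Gamma_j$, exactly as the paper does. Your write-up is in fact somewhat more explicit than the paper's about the chain-rule identification $D\Psi_i^{(j)}(\hat z) = DG_i^{(j)}(\hat z,0)$ and about the transfer of $L^\infty$ bounds through the bi-Lipschitz parametrization, but these are elaborations of the same argument rather than a different route.
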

	\begin{proof}
		Fix $i\in\{1,\ldots,m_{0}\}$. 
		Observe that $DG_{i}(x) : \Rd \rightarrow \Rd$ extends $D\Psi_{i}(x) : \R^{d-1} \rightarrow T_{x}\partial\Omega$ for $\mathcal{H}^{d-1}$-a.e.\ $x \in \partial\Omega$ in the sense that $DG_{i}(x)$ can be seen as a map from $\R^{d-1}\times\{0\}$ to $\Rd$. Since $G_{i}$ is a bi-Lipschitz map we have that $DG_{i} : \Rd \rightarrow \R^{d\times d}$ and its inverse $(DG_{i})^{-1} : \Rd  \rightarrow \R^{d\times d}$ are essentially bounded.
		
		To construct an $L^{\infty}$ basis of $T\partial\Omega$ we can join the bases of each $T\Gamma_{i}$ in the following way: for $x \in \Gamma_{1}$ we use the basis $\mathcal{B}_{\Psi_{1}}$, and for $x \in \Gamma_{s}\setminus\bigcup_{j=1}^{s-1}\Gamma_{j}$ for some $2 \leq s\leq m_{0}$ we use $\mathcal{B}_{\Psi_{s}}$.
	\end{proof}
	
	Unlike in Section \ref{subsec: minors fixed x}, we need to give a precise definition of the convergence of minors of a linear map without the need of fixing bases.
	
	\begin{defi}\label{def: convergencia de menores de una matriz}
		Let $\{f_{n}\}_{n\in\N}$ be a sequence of maps $f_{n} : T\partial\Omega \rightarrow \Rd$ such that $f_{n}(x,\cdot) : T_{x}\partial\Omega \rightarrow \Rd$ is linear for $\mathcal{H}^{d-1}$-a.e.\ $x\in\partial\Omega$. We say that $Ml( f_{n} ) \rightharpoonup Ml( f )$ in $L^{q}( \partial\Omega )$ for some $q\geq 1$ if there exists $\mathcal{V}$ an $L^{\infty}$ basis of $T\partial\Omega$ such that $M( f_{n} ) \rightharpoonup M( f )$ in $L^{q}( \partial\Omega;\R^{\nu_{d-1}} )$ where the matrix representation of each $f_{n}$ and $f$ is taken with respect to $\mathcal{V}$ and the canonical basis of $\Rd$.
	\end{defi}
	
	The convergence of minors of a linear map is independent of the choice of the $L^{\infty}$ basis.
	
	\begin{prop}
		Let $\mathcal{V}$ and $\mathcal{B}$ be two $L^{\infty}$ bases of $T\partial\Omega$, let $\{f_{n}\}_{n\in\N}$ be a sequence of maps $f_{n} : T\partial\Omega \rightarrow \Rd$ such that $f_{n}(x,\cdot) : T_{x}\partial\Omega \rightarrow \Rd$ is linear for $\mathcal{H}^{d-1}$-a.e.\ $x\in\partial\Omega$ and such that $M( f_{n} ) \rightharpoonup M( f )$ in $L^{q}( \partial\Omega ;\R^{\nu_{d-1}})$ for some $q\geq 1$ where the matrix representation of $f$ and $f_{n}$ are with respect to $\mathcal{V}$ and the canonical basis of $\Rd$. Then $M( f_{n} ) \rightharpoonup M( f )$ in $L^{q}( \partial\Omega ;\R^{\nu_{d-1}})$ where the matrix representation of $f$ and $f_{n}$ are with respect to $\mathcal{B}$ and the canonical basis of $\Rd$.
	\end{prop}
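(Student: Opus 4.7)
The plan is to reduce the statement to the fact that a change of $L^\infty$ basis on $T\partial\Omega$ corresponds pointwise to right-multiplication of the matrix representation by a $(d-1)\times(d-1)$ matrix $A(x)$ with $A,A^{-1}\in L^\infty(\partial\Omega;\R^{(d-1)\times(d-1)})$, and then to invoke the Cauchy-Binet formula together with the stability of weak $L^q$-convergence under $L^\infty$ multiplication.

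First I would construct $A$ from $\mathcal{V}=\{v_1,\ldots,v_{d-1}\}$ and $\mathcal{B}=\{w_1,\ldots,w_{d-1}\}$ by writing $w_j(x)=\sum_i a_{ij}(x)\,v_i(x)$ for $\mathcal{H}^{d-1}$-a.e.\ $x\in\partial\Omega$. Using the $L^\infty$ extensions $\tilde P_x$ and $\tilde Q_x$ guaranteed by Definition~\ref{def: L infinity basis} for $\mathcal{V}$ and $\mathcal{B}$ respectively, the entries $a_{ij}(x)$ are just the first $d-1$ coordinates of $\tilde P_x^{-1}\tilde Q_x e_j$, so $A\in L^\infty$; swapping the roles of the two bases gives $A^{-1}\in L^\infty$. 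Letting $F_n^{\mathcal{V}}(x)$ and $F_n^{\mathcal{B}}(x)$ denote the $d\times(d-1)$ matrix representations of $f_n(x,\cdot)$ in the respective bases (and similarly $F^{\mathcal{V}},F^{\mathcal{B}}$ for the limit), linearity gives the pointwise identities $F_n^{\mathcal{B}}(x)=F_n^{\mathcal{V}}(x)\,A(x)$ and $F^{\mathcal{B}}(x)=F^{\mathcal{V}}(x)\,A(x)$.

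Next I would apply the Cauchy-Binet formula pointwise: for each integer $1\leq k\leq d-1$ and each pair of $k$-element index sets $I\subset\{1,\ldots,d\}$ and $J\subset\{1,\ldots,d-1\}$,
\[
M_{I,J}\bigl(F_n^{\mathcal{B}}(x)\bigr)=\sum_{K\subset\{1,\ldots,d-1\},\,|K|=k} M_{I,K}\bigl(F_n^{\mathcal{V}}(x)\bigr)\,M_{K,J}(A(x)).
\]
The coefficients $M_{K,J}(A(x))$ are polynomials in the entries of $A(x)$ and hence lie in $L^\infty(\partial\Omega)$. Since multiplication by an $L^\infty$ function preserves weak $L^q$-convergence, the hypothesis $M(F_n^{\mathcal{V}})\rightharpoonup M(F^{\mathcal{V}})$ in $L^q$ delivers $M_{I,J}(F_n^{\mathcal{B}})\rightharpoonup M_{I,J}(F^{\mathcal{B}})$ in $L^q(\partial\Omega)$ for every such $(I,J)$, and gathering all components yields $M(f_n)\rightharpoonup M(f)$ in $L^q(\partial\Omega;\R^{\nu_{d-1}})$ with respect to $\mathcal{B}$.

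The main technical point, modest as it is, will be securing the genuine $L^\infty$ control on $A$ and $A^{-1}$; this is the only place where the hypothesis that $\mathcal{V}$ and $\mathcal{B}$ are $L^\infty$ bases (rather than merely measurable ones) is actually used. Once this bound is in hand, the remainder is an elementary combination of Cauchy-Binet and the stability of weak $L^q$-convergence under multiplication by $L^\infty$ factors, with no further subtlety.
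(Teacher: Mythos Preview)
Your proposal is correct and follows essentially the same approach as the paper: both arguments construct the change-of-basis matrix $A(x)$ relating $\mathcal{V}$ and $\mathcal{B}$, use the $L^\infty$-basis hypothesis to get $A,A^{-1}\in L^\infty$, apply the Cauchy--Binet formula to express the minors in one basis as $L^\infty$-linear combinations of the minors in the other, and conclude by stability of weak $L^q$-convergence under multiplication by $L^\infty$ coefficients. Your write-up is in fact slightly more explicit than the paper's about how the $L^\infty$ bound on $A$ follows from Definition~\ref{def: L infinity basis} and about the componentwise form of Cauchy--Binet.
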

	\begin{proof}
		Let $\mathcal{V} = \{v_{1},\ldots,v_{d-1}\}$ and $\mathcal{B} = \{b_{1},\ldots,b_{d-1}\}$. Denote by $\mathcal{V}_{f_{n}}$ and $\mathcal{B}_{f_{n}}$ the matrix representations of $f_{n}$ with respect to $\mathcal{V}$ and $\mathcal{B}$, respectively, and the canonical basis of $\Rd$, and denote by $\mathcal{V}_{f_{n}(x)}$ and $\mathcal{B}_{f_{n}(x)}$ the matrix representations of $f_{n}(x,\cdot)$ with respect to $\mathcal{V}_{x}$ and $\mathcal{B}_{x}$, respectively, and the canonical basis of $\Rd$. For $\mathcal{H}^{d-1}$-a.e.\ $x\in\partial\Omega$ there exist measurable maps $\{a_{i,j}\}_{j=1}^{d-1}$  from $\partial\Omega$ to $\R$ such that $v_{i}(x) = \sum_{j=1}^{d-1}a_{j,i}(x)b_{j}(x)$ for each $i\in\{1,\ldots,d-1\}$, i.e., for $\mathcal{H}^{d-1}$-a.e.\ $x\in\partial\Omega$ there exists $A_{x} = (a_{j,i}(x))\in\R^{(d-1)\times (d-1)}$ such that $\mathcal{V}_{f_{n}(x)} A_{x}^{-1}= \mathcal{B}_{f_{n}(x)}$. Taking minors we obtain that $M( \mathcal{V}_{f_{n}(x)} A_{x}^{-1})= M(\mathcal{B}_{f_{n}(x)})$ and since $\mathcal{V}$ and $\mathcal{B}$ are $L^{\infty}$ bases we have that $A_{x}$ and $A_{x}^{-1}$ are bounded.
		Therefore, by the Cauchy-Binet formula, there exists a linear map $\mathfrak{F}_{x}:\R^{\nu_{d-1}} \rightarrow \R^{\nu_{d-1}}$ such that $\mathfrak{F}_{x}( M( \mathcal{V}_{f_{n}(x)} ))= M(\mathcal{B}_{f_{n}(x)})$ for $\mathcal{H}^{d-1}$-a.e.\ $x\in\partial\Omega$. In the same way, there exists a linear map $\mathfrak{F}:\R^{\nu_{d-1}} \rightarrow \R^{\nu_{d-1}}$ such that $\mathfrak{F}( M( \mathcal{V}_{f_{n}} ))= M(\mathcal{B}_{f_{n}})$ and hence, since $M( \mathcal{V}_{f_{n}} ) \rightharpoonup M( \mathcal{V}_{f} )$ in $L^{q}(\partial\Omega;\R^{\nu_{d-1}})$ we also have that $M( \mathcal{B}_{f_{n}} ) \rightharpoonup M( \mathcal{B}_{f} )$ in $L^{q}(\partial\Omega;\R^{\nu_{d-1}})$.
	\end{proof}
	
	A result on the weak continuity of minors was proved in \cite[Prop.\ 15]{BernardBessi} using geometric tools.
	We present a straightforward proof in the next proposition.
	
	\begin{prop}\label{convergencia de menores}
		Let $p > d-1$.
		Let $u\in W^{1,p}(\partial\Omega;\Rd)$ and let $\{u_{n}\}_{n\in\N}\subset W^{1,p}(\partial\Omega;\Rd)$ be such that $u_{n}\rightharpoonup u$ in $W^{1,p}(\partial\Omega;\Rd)$ as $n\to\infty$. Then $Ml(D^{\tau}u_{n})\rightharpoonup Ml(D^{\tau}u)$ in $L^{1}(\partial\Omega)$ as $n\to\infty$. 
	\end{prop}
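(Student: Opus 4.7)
The plan is to reduce the statement to the classical weak continuity of Jacobian minors of Sobolev maps from an open set of $\R^{d-1}$ into $\R^d$, via the bi-Lipschitz parametrization of $\p \O$ given by the charts $\Psi_i$. The crucial observation is that, when one adopts the $L^{\infty}$ basis $\mathcal{B}_{\Psi_i}$ of $T\Gamma_i$ whose vectors are the columns $D\Psi_i^{(j)}$, the chain rule applied to $\mathcal{L}_i(u) = u \circ \Psi_i$ yields
\[
D \mathcal{L}_i(u)(\hat z)\, e_j = D^{\tau} u(\Psi_i(\hat z))\, D \Psi_i^{(j)}(\hat z),
\]
so that the matrix representation of $D^{\tau} u$ at $\Psi_i(\hat z)$ with respect to $\mathcal{B}_{\Psi_i(\hat z)}$ and the canonical basis of $\Rd$ is precisely the Jacobian $D \mathcal{L}_i(u)(\hat z)$. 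Consequently, $M(D^{\tau} u) \circ \Psi_i = M(D \mathcal{L}_i(u))$ componentwise.

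With this identification in hand, I would apply Lemma \ref{Lema de if and only if}\ref{Lema de if and only if (i)} to transport the hypothesis $u_n \rightharpoonup u$ in $W^{1,p}(\p \O; \Rd)$ into $\mathcal{L}_i(u_n) \rightharpoonup \mathcal{L}_i(u)$ in $W^{1,p}((0,r)^{d-1}; \Rd)$ for each $i \in \{1,\dots,m_0\}$. Since $p > d-1$ is strictly above all possible minor orders $k \leq d-1$, the classical theorem on the weak continuity of minors of Jacobians (Morrey, Reshetnyak, Ball) yields
\[
M_k(D \mathcal{L}_i(u_n)) \rightharpoonup M_k(D \mathcal{L}_i(u)) \quad \text{in } L^{p/k}((0,r)^{d-1})
\]
for every $k = 1, \ldots, d-1$, which on the bounded set $(0,r)^{d-1}$ implies weak convergence in $L^{1}$.

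Next I would transfer this $L^1$-weak convergence back to each open piece $\Gamma_i$ by change of variables: given $\phi \in L^{\infty}(\Gamma_i)$, the pullback $\phi \circ \Psi_i$ multiplied by the (bounded, and bounded away from zero) area factor of $\Psi_i$ remains in $L^{\infty}((0,r)^{d-1})$, so the chart-wise weak convergence of the minors translates into weak $L^{1}(\Gamma_i)$ convergence of the corresponding minors of $D^{\tau} u_n$. Gluing the charts exactly as in the preceding lemma, namely using $\mathcal{B}_{\Psi_1}$ on $\Gamma_1$ and $\mathcal{B}_{\Psi_s}$ on $\Gamma_s \setminus \bigcup_{j<s} \Gamma_j$ for $s \geq 2$, we obtain $M(D^{\tau} u_n) \rightharpoonup M(D^{\tau} u)$ in $L^{1}(\p \O; \R^{\nu_{d-1}})$ with respect to this specific $L^{\infty}$ basis of $T \p \O$. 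The conclusion in the sense of Definition \ref{def: convergencia de menores de una matriz} then follows at once from the preceding proposition on independence of the convergence from the choice of $L^{\infty}$ basis.

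The main technical obstacle is the first step: correctly identifying the abstract tangential minors $M(D^{\tau} u)$ with the ordinary Jacobian minors $M(D \mathcal{L}_i(u))$ under the chosen basis, which is what frees us from doing multilinear algebra on the manifold $\p \O$ and lets us rely on the standard Euclidean weak-continuity theorem. Once this identification is written down carefully, the remaining steps, namely applying the classical minors theorem and transferring convergence through the bi-Lipschitz charts, are routine.
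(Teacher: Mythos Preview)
Your proposal is correct and follows essentially the same route as the paper: reduce to charts via Lemma~\ref{Lema de if and only if}\ref{Lema de if and only if (i)}, identify $M(D^{\tau}u)\circ\Psi_i$ with $M(D\mathcal{L}_i(u))$ through the basis $\mathcal{B}_{\Psi_i}$, apply the classical weak continuity of minors on $(0,r)^{d-1}$, and transfer back to $\partial\Omega$. The only cosmetic difference is that the paper invokes Lemma~\ref{Lema de if and only if}\ref{Lema de if and only if (ii)} directly for the return step, whereas you spell out the change of variables and gluing by hand; both amount to the same thing.
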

	\begin{proof}
		By Lemma \ref{Lema de if and only if}\ref{Lema de if and only if (i)} we have that $\mathcal{L}_{i}(u_{n})\rightharpoonup\mathcal{L}_{i}(u)$ in $W^{1,p}((0,r)^{d-1};\Rd)$ for each $i\in\{1,\ldots,m_{0}\}$. For each $i\in\{1,\ldots,m_{0}\}$ the result of \cite[Theorem 8.20]{Dacorogna} gives us that $M( D \mathcal{L}_{i}(u_{n})  ) \rightharpoonup M(  D \mathcal{L}_{i}(u)  )$ in $L^{1}((0,r)^{d-1};\R^{\nu_{d-1}})$ where both matrix representations are with respect to the canonical bases.
		
		As $\mathcal{L}_{i}(u) = u\circ\Psi_{i}$ we have that $D\mathcal{L}_{i} (u)(\hat{z}) = D^{\tau}u(\Psi_{i}(\hat{z}))D\Psi_{i}(\hat{z}) : \R^{d-1}  \rightarrow  \Rd$ for each $i \in \{1,\ldots,m_{0}\}$ and any $\hat{z}\in(0,r)^{d-1}$. Fix the $L^{\infty}$ basis $\mathcal{B}_{\Psi_{i}}$ and observe that for any $\hat{z}\in(0,r)^{d-1}$ we have that $D\Psi_{i}(\hat{z}) : \R^{d-1} \rightarrow T_{\Psi_{i}(\hat{z})}\partial\Omega$ is defined by $e_{j} \mapsto D\Psi_{i}(\hat{z})e_{j} = D\Psi_{i}^{(j)} (\hat{z})$ for each $e_{j}$ in the canonical basis of $\R^{d-1}$; consequently, $D\Psi_{i} =\Id$. On the other hand, since $D\mathcal{L}_{i}(u) = D^{\tau}u(\Psi_{i})$ with respect to $\mathcal{B}_{\Psi_{i}}$ and the canonical basis, we have that 
		$
		M( D^{\tau}u_{n}(\Psi_{i}) ) \rightharpoonup M( D^{\tau}u(\Psi_{i}))
		$ in $
		L^{1}((0,r)^{d-1};\R^{\nu_{d-1}}),
		$
		where again, the matrices are with respect to $\mathcal{B}_{\Psi_{i}}$ and the canonical basis of $\R^{d-1}$.
		By Definition \ref{def: convergencia de menores de una matriz} this means that 
		\begin{equation}\label{eq: convergence of M D t u Psi }
			Ml( D^{\tau}u_{n}(\Psi_{i}) ) \rightharpoonup Ml( D^{\tau}u(\Psi_{i}) )
			\qquad
			\text{in}
			\qquad
			L^{1}((0,r)^{d-1}).
		\end{equation}
		Observe that 
		$
		Ml(  D^{\tau}u(\Psi_{i})  )
		=
		Ml(D^{\tau}u)\circ\Psi_{i}
		=
		\mathcal{L}_{i}( Ml(D^{\tau}u) )
		$ and hence, expression \eqref{eq: convergence of M D t u Psi } means that 
		\[
		\mathcal{L}_{i} (  Ml( D^{\tau}u_{n} )  ) \rightharpoonup \mathcal{L}_{i} (  Ml( D^{\tau}u )  )
		\qquad
		\text{in}
		\qquad
		L^{1}((0,r)^{d-1};\R^{\nu_{d-1}}).
		\]
		Lemma \ref{Lema de if and only if}\ref{Lema de if and only if (ii)} gives us that $Ml( D^{\tau}u_{n} ) \rightharpoonup Ml( D^{\tau}u )$ in $L^{1}( \partial\Omega)$.
	\end{proof}
	
	\section{Tangential polyconvexity and quasiconvexity}\label{se:tangential}
	
	We first give a definition used along the rest of the article (see \cite{Dacorogna}).
	
	\begin{defi}\label{polyconvexity} Let $V\subset\Rd$ be a $m$-dimensional vector space for some natural $m\leq d$.
		\begin{enumerate}[label=(\roman*)]
			\item\label{def polyconvexity (i)}
			A function $f:\R^{d\times m}\rightarrow \R$ is said to be polyconvex if there exists $\varphi:\R^{\nu_{m}} \rightarrow \R$ convex such that $f(F) = \varphi(M(F))$.
			
			\item\label{def polyconvexity (ii)}
			A function $W_{0}:\mathcal{L}(V;\Rd)\rightarrow\R$ is called polyconvex if there exist $\mathcal{B}_{V}$ a measurable basis of $V$ and a convex function $\Phi:\R^{\nu_{m}}\rightarrow\R$ such that $W_{0}(F)=\Phi(M(F))$ for all $F\in\mathcal{L}(V;\Rd)$ in the sense of \ref{def polyconvexity (i)} where $M(F)$ refers to the minors of the matrix representation of $F$ with respect to $\mathcal{B}_{V}$ and the canonical basis in $\Rd$.
		\end{enumerate}
	\end{defi}
	
	We will use the cases $m=d$ and $m=d-1$.
	
	We now define the energy functional for which we will prove the existence of minimizers in $\overline{\mathcal{A}}_{p}(\Omega)\cap\AIB$.
	As natural in the theory of nonlinear elasticity, the functional will be of the form 
	\begin{equation}\label{functional to minimize}
		I[u]=\int_{\Omega}W(x,u(x),Du(x))\dd x + \int_{\partial\Omega} U(x,u(x),D^{\tau}u(x),n(x)) \dd\mathcal{H}^{d-1}(x),
	\end{equation}
	where the function $W$ refers to the elastic energy of the deformation $u$ applied on the body occupying $\Omega$ in its reference configuration, and $U$ refers to the elastic energy of the deformation $u$ applied to the boundary of the body.
	The potentials $W$ and $U$ do not usually depend on $u(x)$, but we have included them here since the theory applies also for this case.
	In fact, external forces depend on $u(x)$.
	Recall that $D^{\tau}u$ is the tangential derivative of $u_{|_{\p \O}}$.
	Proofs of these kind often only take into account the functional over $\Omega$, and follow standard polyconvexity and lower semicontinuity reasonings.
	However, as we are working in the class $\AIB$, we also need the term over $\partial\Omega$, and, hence, an analogous concept to polyconvexity on the boundary.
	
	\begin{remark}\label{remark: dominio de U}The domain of $W$ is $\Omega\times\Rd\times\R_{+}^{d\times d}$, however, the functional $U$ has a more specific domain:
		$$\mathcal{D}_{U}\coloneqq\{(x,y,F,n):x\in\partial\Omega,\ y\in\Rd,\ F\in\mathcal{L}(T_{x}\partial\Omega;\Rd),\ n\in N_{x}\partial\Omega \cap S^{d-1} \}.$$
		Note that $\mathcal{L}(T_{x}\partial\Omega;\Rd)\simeq(T_{x}\partial\Omega)^{d}$.\end{remark}

	In order to prove the existence of minimizers of $I$ we need to prove weak lower semicontinuity on the boundary integral of \eqref{functional to minimize}. In the same way that polyconvexity is sufficient for semicontinuity on the integral over $\Omega$, the following concept will provide a sufficient condition for semicontinuity on $\partial\Omega$.
	
	\begin{defi}\label{defi tangential polyconvexity}
		A function $U:T^{d}\partial\Omega\rightarrow\R$ is said to be tangentially polyconvex if there exists
		a measurable basis of $T \partial\Omega$ and a function $\Phi:\partial\Omega\times\R^{\nu_{d-1}}\rightarrow \R$ such that $\Phi(x,\cdot)$ is convex for $\mathcal{H}^{d-1}$-a.e.\ $x\in\partial\Omega$ and $U(x,F)=\Phi(x,M(F))$ for every $F\in \mathcal{L} (T_{x}\partial\Omega)^{d}$.\end{defi}
	
	The definition of tangential polyconvexity is independent of the choice of the measurable basis.
	
	\begin{prop}\label{prop: indpendence of measurable basis}
		Let $U:T^{d}\partial\Omega\rightarrow\R$ be tangentially polyconvex and let $\mathcal{V} = \{ \tilde{v}_{1},\ldots,\tilde{v}_{d-1} \}$ be a measurable basis of $T\partial\Omega$. Then there exists $\Phi_{ \mathcal{V} }:\partial\Omega\times\R^{\nu_{d-1}}\rightarrow \R$ such that $\Phi_{ \mathcal{V} }(x,\cdot)$ is convex for $\mathcal{H}^{d-1}$-a.e.\ $x\in\partial\Omega$ and such that $U(x,F)=\Phi_{ \mathcal{V} }(x,M(F))$ for every $F\in \mathcal{L} (T_{x}\partial\Omega; \R^{d})$, where the matrix representation of $F$ is with respect to $\mathcal{V}$ and the canonical basis.
	\end{prop}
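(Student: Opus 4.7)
The plan is to reduce this to the Cauchy--Binet formula, essentially replicating the change-of-basis argument used in the preceding proposition on the independence of the convergence of minors. Let $\mathcal{B} = \{b_1, \ldots, b_{d-1}\}$ be the measurable basis supplied by the definition of tangential polyconvexity, with associated $\Phi : \partial\Omega \times \R^{\nu_{d-1}} \to \R$ convex in the second variable, so that $U(x,F) = \Phi(x, M_{\mathcal{B}}(F))$, where $M_{\mathcal{B}}(F)$ denotes the minors of the matrix representation of $F$ with respect to $\mathcal{B}_x$ and the canonical basis of $\Rd$.

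For $\mathcal{H}^{d-1}$-a.e.\ $x \in \partial\Omega$, I would express the given basis vectors in terms of the old one: since both $\mathcal{V}_x = \{\tilde v_1(x), \ldots, \tilde v_{d-1}(x)\}$ and $\mathcal{B}_x$ are bases of $T_x \partial\Omega$, there exists a unique invertible matrix $A_x \in \R^{(d-1)\times(d-1)}$ with $\tilde v_i(x) = \sum_j (A_x)_{ji} \, b_j(x)$; measurability of $x \mapsto A_x$ follows from that of both bases. The key observation is then that the two matrix representations of $F \in \mathcal{L}(T_x\partial\Omega;\Rd)$ are related by $F_{\mathcal{B}} = F_{\mathcal{V}} A_x^{-1}$ (columns transform contravariantly), so by the Cauchy--Binet formula there is a linear map $\mathfrak{F}_x : \R^{\nu_{d-1}} \to \R^{\nu_{d-1}}$, depending measurably on $x$ and whose entries are polynomials in the entries of $A_x^{-1}$, such that
\[
M_{\mathcal{B}}(F) = \mathfrak{F}_x \bigl( M_{\mathcal{V}}(F) \bigr) \qquad \text{for every } F \in \mathcal{L}(T_x\partial\Omega;\Rd).
\]

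With this in hand I would simply define
\[
\Phi_{\mathcal{V}}(x, \xi) \coloneqq \Phi\bigl(x, \mathfrak{F}_x(\xi)\bigr), \qquad (x,\xi) \in \partial\Omega \times \R^{\nu_{d-1}}.
\]
Then $\Phi_{\mathcal{V}}(x,\cdot)$ is convex as the composition of a convex function with a linear map, and by construction
\[
\Phi_{\mathcal{V}}(x, M_{\mathcal{V}}(F)) = \Phi(x, \mathfrak{F}_x(M_{\mathcal{V}}(F))) = \Phi(x, M_{\mathcal{B}}(F)) = U(x,F),
\]
as required. Measurability of $\Phi_{\mathcal{V}}$ in $x$ follows from that of $x \mapsto \mathfrak{F}_x$ together with measurability of $\Phi$. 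The only mildly delicate point is the passage $F_{\mathcal{B}} = F_{\mathcal{V}} A_x^{-1}$: one must be careful about the direction of the change-of-basis, but once this is set up correctly the Cauchy--Binet identity makes the argument essentially automatic, so I do not expect a genuine obstacle.
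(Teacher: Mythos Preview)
Your proposal is correct and follows essentially the same route as the paper: take the basis $\mathcal{B}$ and the convex $\Phi$ from the definition, write the change-of-basis matrix $A_x$ between $\mathcal{V}_x$ and $\mathcal{B}_x$, apply Cauchy--Binet to obtain a linear map $\mathfrak{F}_x$ on the minors, and set $\Phi_{\mathcal{V}}(x,\cdot)=\Phi(x,\mathfrak{F}_x(\cdot))$, which is convex as a composition of convex with linear. The only cosmetic differences are your explicit remarks on measurability and your care about the direction of the change-of-basis, neither of which alters the argument.
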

	\begin{proof}
		There exist a measurable basis $\mathcal{B} \coloneqq \{ v_{1},\ldots,v_{d-1} \}$ of $T\partial\Omega$ and a map $\Phi: \partial\Omega \times \R^{\nu_{d-1}} \rightarrow \R$ such that $\Phi(x,\cdot)$ is convex for $\mathcal{H}^{d-1}$-a.e.\ $x\in\partial\Omega$ and $U(x,F) = \Phi(x,M( \mathcal{B}_{F }))$ for every $F\in\mathcal{L}(T_{x}\partial\Omega;\Rd)$ where $\mathcal{B}_{F }$ refers to the matrix representation of $F$ with respect to $\{ v_{1}(x),\ldots,v_{d-1}(x) \}$ and the canonical basis of $\Rd$. 
		Let $\mathcal{V}_{F}$ be the matrix representation of $F$ with respect to $\{ \tilde{v}_{1}(x),\ldots,\tilde{v}_{d-1}(x) \}$ and the canonical basis of $\Rd$, there exist measurable maps $\{a_{i,j}\}_{j=1}^{d-1}$ from $\partial\Omega$ to $\R$ such that $\tilde{v}_{i}(x) = \sum_{j=1}^{d-1}a_{j,i}(x)v_{j}(x)$ and therefore that
		there also exists a matrix $A_{x}=(a_{i,j}(x))_{i,j}\in\R^{(d-1)\times (d-1)}$ such that $\mathcal{V}_{F} = \mathcal{B}_{F}A_{x}^{\T}$. 
		Taking minors we obtain that $M(\mathcal{B}_{F }) = M(\mathcal{V}_{F }A^{-\T})$, and by the Cauchy-Binet formula there exists a linear map $\mathfrak{F}_{A_{x}}:\R^{\nu_{d-1}}\rightarrow\R^{\nu_{d-1}}$ such that $M( \mathcal{B}_{F }) = \mathfrak{F}_{A_{x}}(M( \mathcal{V}_{F }))$. As the composition of a linear map with a convex map is convex, we have that 
		\[
		U(x,F)
		=
		\Phi(x,M( \mathcal{B}_{F }))
		=
		\Phi( x,\mathfrak{F}_{A_{x}}(M(\mathcal{V}_{F})) )
		=
		\Phi_{\mathcal{V}}( x,M(\mathcal{V}_{F}) ) 
		\]
		for some convex map $\Phi_{\mathcal{V}} : \partial \Omega \times \R^{\nu_{d-1}} \rightarrow \R$ defined by $(x,(a_{1},\ldots,a_{\nu_{d-1}})) \mapsto \Phi_{\mathcal{B}}( x,\mathfrak{F}_{A_{x}}((a_{1},\ldots,a_{\nu_{d-1}})) )$.
	\end{proof}
	
	The relationship between tangential polyconvexity and usual polyconvexity is presented in the following proposition.
	
	\begin{prop}\label{prop: properties of tangential polyconvexity} The following are properties of tangential polyconvexity.
		\begin{enumerate}[label=(\roman*)]
			\item\label{i de la relacion de pc y pctg} Let $\widetilde{U} : \partial \Omega \times \mathcal{L}(\Rd;\Rd) \rightarrow \R$. The map $\mathcal{L}(\Rd;\Rd) \ni A\mapsto\widetilde{U}(x,A_{|T_{x}\partial\Omega})$ is polyconvex for $\mathcal{H}^{d-1}$-a.e.\ $x\in\partial\Omega$ if and only if the map $U:T^{d}\partial\Omega\rightarrow\R$ defined as $U\coloneqq\widetilde{U}_{|T^{d}\partial\Omega}$ is tangentially polyconvex.
			
			\item\label{ii de la relacion de pc y pctg} Let $U:\partial\Omega \times \mathcal{L}(\Rd;\Rd)  \rightarrow \R$ be such that $U(x,\cdot)$ is polyconvex for $\mathcal{H}^{d-1}$-a.e.\ $x\in\partial\Omega$, then $U_{|T^{d}\partial\Omega}$ is tangentially polyconvex.
		\end{enumerate}
	\end{prop}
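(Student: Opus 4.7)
The plan rests on the block decomposition $M=(M^0,M^1)\in\R^{\nu_{d-1}}\times\R^{\nu_{d-1}}$ of the minors of a $d\times d$ matrix with respect to a basis of $\Rd$ extending a frame of $T_x\partial\Omega$ by the unit normal $n(x)$, together with the identity $M^0(A)=M(A_{|T_x\partial\Omega})$ from \eqref{eq:M0kMk} and the vanishing statement in Remark~\ref{remark: existence of bases}. Throughout I would fix an $L^\infty$ basis $\{v_1,\ldots,v_{d-1}\}$ of $T\partial\Omega$ completed by $n(x)$ as in \eqref{eq:vi3}, and take matrix representations with respect to it on the domain and the canonical basis on the codomain; by Proposition~\ref{prop: indpendence of measurable basis} this particular choice is inessential.

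For the direction ``$\Leftarrow$'' of~\ref{i de la relacion de pc y pctg}, if $U(x,F)=\Phi(x,M(F))$ with $\Phi(x,\cdot)$ convex, then \eqref{eq:M0kMk} gives
\[
\widetilde{U}(x,A_{|T_x\partial\Omega})=\Phi(x,M^0(A))=\Psi(x,M(A)),
\]
where $\Psi(x,(m^0,m^1))\coloneqq\Phi(x,m^0)$ is convex as composition of $\Phi(x,\cdot)$ with the linear projection onto the $M^0$-block, giving polyconvexity of $A\mapsto\widetilde{U}(x,A_{|T_x\partial\Omega})$. For ``$\Rightarrow$'', given $\widetilde{U}(x,A_{|T_x\partial\Omega})=\Psi(x,M(A))$ with $\Psi(x,\cdot)$ convex, I extend any $F\in\mathcal{L}(T_x\partial\Omega;\Rd)$ to the unique $\tilde F\in\mathcal{L}(\Rd;\Rd)$ with $\tilde F n(x)=0$ and $\tilde F_{|T_x\partial\Omega}=F$; Remark~\ref{remark: existence of bases} and \eqref{eq:M0kMk} then yield $M(\tilde F)=(M(F),0)$, so
\[
U(x,F)=\widetilde{U}(x,\tilde F_{|T_x\partial\Omega})=\Psi(x,(M(F),0))=\Phi(x,M(F)),
\]
with $\Phi(x,m)\coloneqq\Psi(x,(m,0))$ convex as composition of $\Psi(x,\cdot)$ with the affine inclusion $m\mapsto(m,0)$, establishing tangential polyconvexity of $U$.

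For part~\ref{ii de la relacion de pc y pctg}, under the identification $\mathcal{L}(T_x\partial\Omega;\Rd)\simeq(T_x\partial\Omega)^d$, every $F\in T^d\partial\Omega$ corresponds to an element $\tilde F\in\mathcal{L}(\Rd;\Rd)$ with $\tilde F n(x)=0$. Polyconvexity of $U(x,\cdot)$ provides a convex $\Psi(x,\cdot)$ with $U(x,\tilde F)=\Psi(x,M(\tilde F))$, and the same computation $M(\tilde F)=(M(F),0)$ yields $U(x,F)=\Phi(x,M(F))$ with $\Phi(x,m)\coloneqq\Psi(x,(m,0))$ convex, which is exactly tangential polyconvexity. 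I do not anticipate any substantial obstacle: the argument is essentially bookkeeping, centered on the block decomposition $M=(M^0,M^1)$ and the identification of $(T_x\partial\Omega)^d$ with maps in $\mathcal{L}(\Rd;\Rd)$ annihilating $n(x)$; measurability of the auxiliary basis is ensured by the existence of an $L^\infty$ basis of $T\partial\Omega$.
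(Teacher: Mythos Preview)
Your proof is correct and follows essentially the same route as the paper's: the block decomposition $M=(M^0,M^1)$, the identity $M^0(A)=M(A_{|T_x\partial\Omega})$ from \eqref{eq:M0kMk}, and the zero-extension $\tilde F$ with $\tilde F n(x)=0$ together with Remark~\ref{remark: existence of bases} are exactly the ingredients used there. One cosmetic slip: the $M^1$-block lives in $\R^{\nu_d-\nu_{d-1}}$, not $\R^{\nu_{d-1}}$ (already for $d=2$ these dimensions are $3$ and $2$), but this does not affect the argument.
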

	\begin{proof}
		\ref{i de la relacion de pc y pctg} Assume that the map $U:T^{d}\partial\Omega\rightarrow\R$ is tangentially polyconvex.
		Then there exists some $\Phi:\partial\Omega\times\R^{\nu_{d-1}}\rightarrow\R$ such that $\Phi(x;\cdot)$ is convex for $\mathcal{H}^{d-1}$-a.e.\ $x\in\partial\Omega$ and $U(x,F)=\Phi(x,M(F))$ for all $F\in(T_{x}\partial\Omega)^{d}$ with respect to a measurable basis in $T_{x}\partial\Omega$. We define
		\begin{equation*}\label{def Phi tilde}
			\begin{split}
				\widetilde{\Phi}:\partial\Omega\times\R^{\nu_{d}}&\rightarrow\R\\
				(x,(a_{1},\ldots,a_{\nu_{d}}))&\mapsto \Phi(x,(a_{1},\ldots,a_{\nu_{d-1}})).
			\end{split}
		\end{equation*}
		By \eqref{eq:M0kMk} and \eqref{eq:M0M1}, for any $\tilde{F}\in\mathcal{L}(\Rd;\Rd)$ extending $F$ we have that $M^{0}(\tilde{F}) = M(F)$.
		Moreover, by \eqref{eq: string of minors ordered}, $M(\tilde{F}) = (M(F),M_{1}^{1}(\tilde{F}), \ldots, M_{d}^{1}(\tilde{F}))$. As a consequence, we have that $\Phi(x,M(F)) = \widetilde{\Phi}(x,M(\tilde{F}))$ for $\mathcal{H}^{d-1}$-a.e.\ $x\in\partial\Omega$ and therefore, that
		$
		\widetilde{U}(x,\tilde{F})
		=
		U(x,F).
		$
		Finally $\widetilde{U}(x,\tilde{F}) = \widetilde{\Phi}(x,M(\tilde{F}))$, and since $\widetilde{\Phi}(x;\cdot)$ is convex for $\mathcal{H}^{d-1}$-a.e.\ $x\in\partial\Omega$, we have that $\widetilde{U}(x,\cdot)$ is polyconvex for $\mathcal{H}^{d-1}$-a.e.\ $x\in\partial\Omega$.
		
		Conversely, assume that the map $\mathcal{L}(\Rd;\Rd) \ni A\mapsto\widetilde{U}(x,A_{|T_{x}\partial\Omega})$ is polyconvex for $\mathcal{H}^{d-1}$-a.e.\ $x\in\partial\Omega$, then there exists some $\widetilde{\Phi} : \partial\Omega\times\R^{\nu_{d}} \rightarrow \R$ such that $\widetilde{\Phi}(x,\cdot)$ is convex for $\mathcal{H}^{d-1}$-a.e.\ $x\in\partial\Omega$ and such that $\widetilde{\Phi}(x,M(\widetilde{ F })) = \widetilde{U}(x,\widetilde{ F}_{|T_{x}\partial\Omega})$ for every $\widetilde{ F} \in \mathcal{L}(\Rd;\Rd)$. We define
		\begin{align*}
			\Phi:\partial\Omega\times\R^{\nu_{d-1}}&\rightarrow\R\\
			(x,(a_{1},\ldots,a_{\nu_{d-1}}))&\mapsto \widetilde{\Phi}(x,(a_{1},\ldots,a_{\nu_{d-1}},0,\ldots,0)),
		\end{align*}
		which is convex.
		For any $\widetilde{F}\in\mathcal{L}(\Rd;\Rd)$ we define $F_{x} \in\mathcal{L}(\Rd;\Rd)$ as the extension of $\widetilde{F}_{|T_{x}\partial\Omega}$ such that $F_{x}n(x)=0$.
		With the basis selected as in \eqref{eq:vi3} and by \eqref{eq:M0kMk} and \eqref{eq:M0M1},  we have that $M(\widetilde{F}_{|T_{x}\partial\Omega}) = M^0(F_{x})$.
		By Remark \ref{remark: existence of bases} we also have that $M^1(F_{x}) = 0\in\R^{\nu_{d}-\nu_{d-1}}$.
		Recalling \eqref{eq: string of minors ordered} we obtain that $M(F_x) = (M(\widetilde{F}_{|T_{x}\partial\Omega}),0, \ldots, 0)$.
		Therefore $\Phi( x,M(\widetilde{F}_{|T_{x}\partial\Omega}) ) = \widetilde{\Phi}( x,M(F_{x}) )$ and if we fix $\mathcal{H}^{d-1}$-a.e.\ $x\in\partial\Omega$, we have that
		$
		\widetilde{U}(x,F_{x|T_{x}\partial\Omega})
		=
		\widetilde{U}(x,\widetilde{F}_{|T_{x}\partial\Omega})
		=
		\widetilde{U}_{|T^{d}\partial\Omega}(x,\widetilde{F}_{|T_{x}\partial\Omega})
		=
		U(x,\widetilde{F}_{|T_{x}\partial\Omega}).
		$
		This leads to $\Phi( x,M(\widetilde{F}_{|T_{x}\partial\Omega}) ) = U(x,\widetilde{F}_{|T_{x}\partial\Omega})$ and
		since $\Phi(x,\cdot)$ is convex for $\mathcal{H}^{d-1}$-a.e.\ $x\in\partial\Omega$ then $U$ is tangentially polyconvex.
		
		\ref{ii de la relacion de pc y pctg} Let $n : \partial\Omega \rightarrow \Rd$ be the unit outward normal vector to $\Omega$. Since $U$ is polyconvex, for $\mathcal{H}^{d-1}$-a.e.\ $x\in\partial\Omega$ the map $\mathcal{L}(\Rd;\Rd) \ni A\mapsto U(x,A_{|T_{x}\partial\Omega})$ satisfies that there exist a measurable basis $\mathcal{V}$ and $\Phi:\partial\Omega\times\R^{\nu_{d-1}}\rightarrow\R$ such that $\Phi(x,\cdot)$ is convex for $\mathcal{H}^{d-1}$-a.e.\ $x\in\partial\Omega$ and $U(x,A_{|T_{x}\partial\Omega}) = \Phi(x,M(A_{|T_{x}\partial\Omega}))$ for each $A \in \mathcal{L}(\Rd;\Rd) $, where the matrix is taken with respect to $\mathcal{V}$ and the canonical basis of $\Rd$.
		In particular, if for $\mathcal{H}^{d-1}$-each $(x,F)\in T^{d}\partial\Omega$ we denote by $F_{x}\in\mathcal{L}(\Rd;\Rd)$ the extension of $F$ such that $F_{x}n(x)=0$, the map $U_{|T^{d}\partial\Omega}:T^{d}\partial\Omega\rightarrow\R$ satisfies that 
		$
		U_{|T^{d}\partial\Omega}(x,F) 
		=
		U(x,F_{x|T_{x}\partial\Omega}) 
		=
		\Phi(x,M(F_{x|T_{x}\partial\Omega})) 
		= 
		\Phi(x,M(F))
		$ and therefore $U_{|T^{d}\partial\Omega}$ is tangentially polyconvex. 
	\end{proof}
	
	We present the definition of tangentially quasiconvex, due to \cite{DaFoMaTr99} (see also \cite{Alicandro-Leone} and \cite{Mora-Oliva}).
	
	\begin{defi}\label{cuasiconvexidad tangencial}Let $U:T^{d}\partial\Omega\rightarrow[0,\infty)$ be a Borel function. We say that $U$ is tangentially quasiconvex if for all $(z,\xi)\in T^{d}\partial\Omega$ and all $\varphi\in W^{1,\infty}(B(0;1);T_{z}\partial\Omega)$ such that $\varphi(y)=\xi y$ on $\partial B(0;1)$ we have that
		\[
		U(z,\xi)\leq\frac{1}{\abs{B(0;1)}}\int_{B(0;1)}U(z,D\varphi(y))\dd y.
		\]\end{defi}
	
	Here $B(0;1)$ is the unit ball in $\Rd$. We are regarding $\xi$ as an $d\times d$ matrix and note that the fact $\varphi\in W^{1,\infty}(B(0;1);T_{z}\partial\Omega)$ implies that $D\varphi(x)\in(T_{z}\partial\Omega)^{d}$ for a.e.\ $x\in B(0,1)$.
	In the same way that polyconvexity is sufficient for quasiconvexity (see e.g.\ \cite{Dacorogna}), the same result holds for their tangential versions.
	
	\begin{prop}
		Let $U:T^{d}\partial\Omega\rightarrow\R$ be tangentially polyconvex. Then $U$ is tangentially quasiconvex.
	\end{prop}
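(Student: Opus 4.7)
The plan is to mimic the classical argument that polyconvexity implies quasiconvexity (see, e.g., \cite[Theorem 5.3]{Dacorogna}), transplanted into the tangent-bundle setting. Fix $(z, \xi) \in T^d \partial\Omega$ and $\varphi \in W^{1,\infty}(B(0;1); T_z \partial\Omega)$ with $\varphi(y) = \xi y$ on $\partial B(0;1)$. By Definition \ref{defi tangential polyconvexity} and Proposition \ref{prop: indpendence of measurable basis}, we may fix a convenient measurable basis $\mathcal{V}$ of $T \partial\Omega$ and write $U(x, F) = \Phi(x, M(F))$ with $\Phi(x, \cdot)$ convex for $\mathcal{H}^{d-1}$-a.e.\ $x \in \partial\Omega$. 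Since only the values at $x = z$ enter the inequality, we identify $T_z \partial\Omega$ with $\R^{d-1}$ via the fixed basis $\mathcal{V}_z$, so that $\varphi$ becomes an ordinary $W^{1,\infty}$ map from $B(0;1) \subset \R^d$ to $\R^{d-1}$ with affine boundary values prescribed by the fixed matrix $\xi$, and $M(D\varphi(y))$ is the usual vector of minors of the resulting gradient matrix.

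The key ingredient is the classical null Lagrangian identity
\[
\int_{B(0;1)} M(D\varphi(y)) \, dy = |B(0;1)| \, M(\xi),
\]
which holds componentwise because each $k \times k$ subdeterminant of a gradient is a divergence, so by the divergence theorem its integral over $B(0;1)$ depends only on the boundary trace $\xi y$; this is precisely the statement of \cite[Theorem 8.20]{Dacorogna}, applied to maps with $(d-1)$-dimensional target. With this in hand, Jensen's inequality applied to the convex function $\Phi(z, \cdot)$ yields
\[
U(z, \xi) = \Phi(z, M(\xi)) = \Phi\!\left(z, \tfrac{1}{|B(0;1)|} \int_{B(0;1)} M(D\varphi(y)) \, dy\right) \leq \tfrac{1}{|B(0;1)|} \int_{B(0;1)} \Phi(z, M(D\varphi(y))) \, dy,
\]
and the right-hand side equals $\tfrac{1}{|B(0;1)|} \int_{B(0;1)} U(z, D\varphi(y)) \, dy$, which is exactly the tangential quasiconvexity inequality of Definition \ref{cuasiconvexidad tangencial}.

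The only delicate point is verifying the null Lagrangian identity in the tangential setting, but since $z$ is fixed this reduces to the classical statement for maps $B(0;1) \subset \R^d \to \R^{d-1}$ after identifying $T_z \partial\Omega$ with $\R^{d-1}$, and no further work is required; the remaining steps are essentially bookkeeping.
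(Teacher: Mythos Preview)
Your argument is correct and follows essentially the same route as the paper: fix a basis at $z$, invoke the null Lagrangian identity $\fint_{B} M(D\varphi)\,dy = M(\xi)$ for maps with affine boundary data, and then apply Jensen's inequality to the convex function $\Phi(z,\cdot)$. The only quibble is your citation for the null Lagrangian identity: \cite[Theorem 8.20]{Dacorogna} concerns weak continuity of minors, whereas the mean-value property you need is \cite[Lemma 5.5]{Dacorogna}, which is what the paper cites.
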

	\begin{proof}
		Let $\varphi$ be as in Definition \ref{cuasiconvexidad tangencial}. Let $\Phi:\partial\Omega\times\R^{\nu_{d}}\rightarrow\R$ be such that $\Phi(x,\cdot)$ is convex and $U(x,\xi) = \Phi (x,M(\xi))$.
		Let $B=B(0;1)$.
		Then, by Jensen's inequality, for any $(x,\xi)\in T^{d}\partial\Omega$, 
		\begin{equation*}
			\frac{1}{\abs{B}}\int_{B}U(x,D\varphi(y))\dd y =\frac{1}{\abs{B}}\int_{B}\Phi(x,M(D\varphi(y)))\dd y \geq\Phi\left(x,\frac{1}{\abs{B}}\int_{B}M(D\varphi(y))\dd y\right) .
		\end{equation*}
		Now, by standard properties of minors (see, e.g., \cite[Lemma 5.5]{Dacorogna}),
		\[
		\frac{1}{\abs{B}}\int_{B}M(D\varphi(y))\dd y = \frac{1}{\abs{B}}\int_{B}M(\xi)\dd y = M(\xi) ,
		\]
		and hence,
		\[
		\frac{1}{\abs{B}}\int_{B}U(x,D\varphi(y))\dd y \geq \Phi(x,M(\xi)) = U(x,\xi),
		\]
		so proving that $U$ is tangentially quasiconvex.
	\end{proof}
	
	\section{Interface polyconvexity}\label{sec: Interface polyconvexity}
	
	Given the formulation of the tangential polyconvexity in Definition \ref{defi tangential polyconvexity}, we ought to mention the \textit{interface polyconvexity}, a similar concept developed in \cite{Silhavy}. 
	Since the notion of interface polyconvexity is not really used in this article, this section can be skipped in a first reading.
	Arising in parallel conditions, both notions respond to the need of a convexity property in the stored-energy function for surfaces. 
	While our formulation of tangential polyconvexity considers $T^d \partial\Omega$, the interface polyconvexity is defined for a given $x\in\partial\Omega$ (see \cite[Definitions 5.1 and 6.3]{Silhavy}). 
	
	We first state some definitions and facts from multilinear algebra to be used along the rest of the section.
	For $k \in \N$, the space $\Lambda_{k} \Rd$ consists of all alternating $k$-tensors on $\Rd$, i.e., sums of elements of the form $a_{1} \wedge \cdots \wedge a_{k}$ with $a_{1},\ldots,a_{k}\in \Rd$.
	Here, $\wedge$ denotes the exterior product between vectors in $\Rd$.
	We will make the natural identifications $\Lambda_{0}\Rd \simeq \Lambda_{d}\Rd \simeq \R$ and $\Lambda_{1} \Rd \simeq \Lambda_{d-1}\Rd \simeq \Rd$.
	We will repeatedly use that if $\mathcal{V} = \{ v_1 , \ldots, v_d\}$ is a basis of $\R^d$ then $\mathcal{V}_k := \{ v_{i_1} \wedge \cdots \wedge v_{i_k} : 1 \leq i_1 < \cdots < i_k \leq d \}$ is a basis of $\Lambda_k \Rd$.
	
	Let $L \in \mathcal{L} (\Rd ; \Rd)$.
	The map $\Lambda_{k}L : \Lambda_{k}\Rd \rightarrow \Lambda_{k}\Rd$ is defined as the only linear map such that $(\Lambda_{k}L)(a_{1}\wedge \cdots \wedge a_{k}) = La_{1} \wedge \cdots \wedge La_{k}$ for $a_{1},\ldots,a_{k}\in \Rd$; in particular, the map $\Lambda_{0}L$ is identified with the identity (i.e., multiplication by $1$).
	
	The next definition is from \cite[Section 1.7.5]{Federer}.
	
	\begin{defi}\label{def: inner product}
		Let $m\in\N$ and let $\mathcal{P}_{m}$ be the set of permutations of $(1,\ldots,m)$.
		The inner product in $\Lambda_{m} \Rd$, denoted by $\cdot$, is the only bilinear form such that for all $\xi_{1} , \ldots , \xi_{m} , \eta_{1} , \ldots , \eta_{m} \in \Rd$, 
		\begin{equation*}
			(\xi_{1} \wedge \cdots \wedge \xi_{m} ) \cdot ( \eta_{1} \wedge \cdots \wedge \eta_{m})  = \sum_{\sigma \in \mathcal{P}_{m}} \sign\sigma \prod_{i=1}^{m}\xi_{\sigma(i)} \cdot \eta_{i},
		\end{equation*}
		where the inner product in the right-hand side refers to the standard inner product in $\Rd$.
	\end{defi}
	
	The following result describes the inner product defined above acting on an orthonormal basis.
	
	\begin{lema}\label{lemma inner product on basis}
		Let $\mathcal{V}$ be an orthonormal basis of $\Rd$, let $\xi_{1}, \ldots, \xi_{m}$ be $m$ different elements of $\mathcal{V}$, let $\eta_{1}, \ldots, \eta_{m}$ be $m$ different elements of $\mathcal{V}$ and let $\xi = \xi_{1} \wedge \cdots \wedge \xi_{m}$ and $\eta = \eta_{1} \wedge \cdots \wedge \eta_{m}$.
		
		\begin{enumerate}[label=(\roman*)]
			\item\label{lemma inner product on basis i} If $\{ \xi_{1},\ldots,\xi_{m} \} \neq \{ \eta_{1},\ldots,\eta_{m} \}$ then $\xi\cdot\eta=0$.
			
			\item\label{lemma inner product on basis ii} If $\{ \xi_{1},\ldots,\xi_{m} \} = \{ \eta_{1},\ldots,\eta_{m} \}$ then 
			$
			\xi \cdot \eta = \sign \tilde{\sigma}
			$,
			where $\tilde{\sigma}$ is the only permutation such that $\xi_{\tilde{\sigma}(i)} = \eta_{i}$ for all $i \in \{ 1, \ldots, m\}$.
		\end{enumerate}
	\end{lema}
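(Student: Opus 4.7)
The plan is to apply Definition \ref{def: inner product} directly and exploit the fact that for an orthonormal basis $\mathcal{V}$ of $\Rd$, the inner product $v \cdot w$ of two elements $v, w \in \mathcal{V}$ equals $1$ if $v = w$ and $0$ otherwise. Hence in the defining sum
\[
\xi \cdot \eta = \sum_{\sigma \in \mathcal{P}_{m}} \sign\sigma \prod_{i=1}^{m}\xi_{\sigma(i)} \cdot \eta_{i},
\]
each summand is either $\pm 1$ or $0$, determined by whether $\xi_{\sigma(i)} = \eta_{i}$ for every $i$.

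For part \ref{lemma inner product on basis i}, suppose $\{\xi_{1},\ldots,\xi_{m}\} \neq \{\eta_{1},\ldots,\eta_{m}\}$. Since both are subsets of the same orthonormal basis $\mathcal{V}$, there exists an index $i_{0}$ with $\eta_{i_{0}} \notin \{\xi_{1},\ldots,\xi_{m}\}$. Then for every $\sigma \in \mathcal{P}_{m}$ we have $\xi_{\sigma(i_{0})} \neq \eta_{i_{0}}$, so $\xi_{\sigma(i_{0})} \cdot \eta_{i_{0}} = 0$, which makes the whole product vanish. Summing over $\sigma$ yields $\xi \cdot \eta = 0$.

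For part \ref{lemma inner product on basis ii}, suppose the two sets coincide. Since the entries within each tuple are pairwise distinct (as elements of a basis), there is a unique permutation $\tilde{\sigma} \in \mathcal{P}_{m}$ such that $\xi_{\tilde{\sigma}(i)} = \eta_{i}$ for all $i$. For this $\tilde{\sigma}$, orthonormality gives $\prod_{i=1}^{m}\xi_{\tilde{\sigma}(i)} \cdot \eta_{i} = 1$, contributing $\sign\tilde{\sigma}$ to the sum. For any other $\sigma \neq \tilde{\sigma}$, there exists at least one $i$ with $\xi_{\sigma(i)} \neq \eta_{i}$; both being basis vectors, orthonormality again forces $\xi_{\sigma(i)} \cdot \eta_{i} = 0$, so that term vanishes. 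Adding up, $\xi \cdot \eta = \sign\tilde{\sigma}$, as claimed.

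No substantive obstacle is expected: the argument is a direct bookkeeping using the defining formula and the orthonormality of $\mathcal{V}$. The only point requiring a brief comment is the existence and uniqueness of $\tilde{\sigma}$ in \ref{lemma inner product on basis ii}, which follows from the fact that both $(\xi_{i})_{i}$ and $(\eta_{i})_{i}$ are injective enumerations of the same finite set.
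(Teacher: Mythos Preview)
Your proof is correct and follows essentially the same approach as the paper's: both apply Definition~\ref{def: inner product} directly and use orthonormality to see that each product $\prod_{i=1}^{m}\xi_{\sigma(i)}\cdot\eta_{i}$ vanishes unless $\sigma=\tilde{\sigma}$. Your version is slightly more explicit in part~\ref{lemma inner product on basis i} (identifying a single index $i_{0}$ with $\eta_{i_{0}}\notin\{\xi_{1},\ldots,\xi_{m}\}$, which works because both sets have exactly $m$ elements) and in justifying the existence and uniqueness of $\tilde{\sigma}$, but the substance is identical.
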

	\begin{proof} \ref{lemma inner product on basis i} For each $\sigma \in \mathcal{P}_{m}$ we have that $\xi_{\sigma(i)} \neq \eta_{i}$ for some $i \in \{1, \ldots, m\}$, so $\xi_{\sigma(i)} \cdot \eta_{i} = 0$.
		Consequently, $\prod_{i=1}^{m}\xi_{\sigma(i)} \cdot \eta_{i} =0$.
		
		\ref{lemma inner product on basis ii} For each $\sigma \in \mathcal{P}_{m} \setminus \{ \tilde{\sigma} \}$ we have that $\prod_{i=1}^{m}\xi_{\sigma(i)} \cdot \eta_{i} =0$, as in \ref{lemma inner product on basis i}.
		Therefore, $\xi \cdot \eta = \sign \tilde{\sigma} \prod_{i=1}^{m}\xi_{\tilde{\sigma}(i)} \cdot \eta_{i} = \sign \tilde{\sigma} \prod_{i=1}^{m} \eta_i \cdot \eta_{i} = \sign \tilde{\sigma}$.
	\end{proof}
	
	As a consequence of Lemma \ref{lemma inner product on basis}\ref{lemma inner product on basis ii}, when $m=1$, the product of Definition \ref{def: inner product} is the standard inner product in $\Lambda_{1}\Rd \simeq \Rd$, and when $m=0$, it is the product of real numbers in $\Lambda_0 \Rd \simeq \R$.
	
	The next definition is from \cite[Appendix C]{Silhavy}.
	
	\begin{defi}\label{def: contraction}
		Let $0\leq s\leq r$ be natural numbers. Let $\alpha \in \Lambda_{r}\Rd$ and $\beta \in \Lambda_{s}\Rd$. We define the contraction $\alpha \Contraction \beta \in \Lambda_{r-s}\Rd$ of $\alpha$ by $\beta$ as the alternating $(r-s)$-tensor such that $(\alpha \Contraction \beta)\cdot\gamma = \alpha \cdot (\gamma \wedge\beta)$ for each $\gamma\in\Lambda_{r-s}\Rd$.
	\end{defi}
	
	The following are properties of the contraction.
	
	\begin{lema}\label{lemma: properties of contraction}
		Let $\{v_{1},\ldots,v_{d}\}$ be an orthonormal basis of $\Rd$.
		\begin{enumerate}[label=(\roman*)]
			\item\label{lemma: multilinear algebra i} 
			If $\alpha, \beta \in \Lambda_r \Rd$ for some $r \in \N$ then $\alpha \Contraction \beta = \alpha \cdot \beta$.

			\item\label{lemma: multilinear algebra ii} Let $1\leq s \leq r$, let $1\leq i_{1}<\cdots<i_{r}\leq d$ and $1\leq j_{1}<\cdots<j_{s}\leq d$. If $\{ i_{1}, \ldots, i_{r} \} \cap \{ j_{1}, \ldots, j_{s} \} = \emptyset$ then
			\begin{equation*}%\label{eq: alpha contracted beta = 0}
				v_{i_{1}} \wedge \cdots \wedge v_{i_{r}} \Contraction v_{j_{1}}\wedge \cdots \wedge v_{j_{s}} = 0.
			\end{equation*}
			
			\item\label{lemma: multilinear algebra iii} Consider $n = v_{d}$. If $1\leq i_{1} < \cdots < i_{k+1}\leq d-1$, then
			\begin{equation}\label{eq: characterization of contraction when n not a component}
				v_{i_{1}}\wedge \cdots \wedge v_{i_{k+1}} \Contraction n
				= 
				0 .
			\end{equation}
			If $1\leq i_{1} < \cdots < i_{k} \leq d-1$, then
			\begin{equation}\label{eq: characterization of contraction when n yes a component}
				v_{i_{1}} \wedge \cdots \wedge v_{i_{k}} \wedge n\Contraction n = v_{i_{1}} \wedge \cdots \wedge v_{i_{k}}.
			\end{equation}
			
		\end{enumerate}
	\end{lema}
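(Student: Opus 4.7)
The plan is to handle the three parts in order, with (i) being an immediate unraveling of the definition, (ii) reducing to Lemma \ref{lemma inner product on basis}\ref{lemma inner product on basis i} via duality, and (iii) being essentially a corollary of (ii) together with a direct computation.

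For \ref{lemma: multilinear algebra i}, I would specialize Definition \ref{def: contraction} to the case $r = s$, so that $\alpha \Contraction \beta \in \Lambda_0 \Rd \simeq \R$ and the test space $\Lambda_{r-s}\Rd$ also degenerates to $\R$. Plugging $\gamma = 1$ and using $1 \wedge \beta = \beta$ under the identification $\Lambda_0 \Rd \simeq \R$ gives $\alpha \Contraction \beta = \alpha \cdot \beta$ at once.

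For \ref{lemma: multilinear algebra ii}, the plan is to show that the contraction vector is orthogonal to every element of $\Lambda_{r-s}\Rd$ and therefore zero. Given $\gamma \in \Lambda_{r-s}\Rd$, I expand $\gamma$ in the orthonormal basis $\mathcal{V}_{r-s}$ of $\Lambda_{r-s}\Rd$, so that $\gamma \wedge v_{j_1} \wedge \cdots \wedge v_{j_s}$ becomes a linear combination of basis wedges of the form $v_{k_1} \wedge \cdots \wedge v_{k_{r-s}} \wedge v_{j_1} \wedge \cdots \wedge v_{j_s}$. Each such wedge, when paired via Definition \ref{def: inner product} with $v_{i_1} \wedge \cdots \wedge v_{i_r}$, vanishes by Lemma \ref{lemma inner product on basis}\ref{lemma inner product on basis i}: the index set $\{k_1,\ldots,k_{r-s},j_1,\ldots,j_s\}$ must contain all the $j_\bullet$'s, which by hypothesis are disjoint from $\{i_1,\ldots,i_r\}$, so the two index sets cannot coincide. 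Hence the defining pairing is identically zero.

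For \ref{lemma: multilinear algebra iii}, identity \eqref{eq: characterization of contraction when n not a component} is an instant specialization of \ref{lemma: multilinear algebra ii} because $\{i_1,\ldots,i_{k+1}\} \subset \{1,\ldots,d-1\}$ is disjoint from $\{d\}$. For \eqref{eq: characterization of contraction when n yes a component}, I would verify both sides produce the same inner product against an arbitrary $\gamma \in \Lambda_k \Rd$. Expanding $\gamma$ in the basis $\mathcal{V}_k$: any basis element whose index set contains $d$ yields $\gamma \wedge n = 0$ and so does not contribute, while for a basis element $v_{j_1} \wedge \cdots \wedge v_{j_k}$ with indices in $\{1,\ldots,d-1\}$ the pairing $(v_{i_1} \wedge \cdots \wedge v_{i_k} \wedge n) \cdot (v_{j_1} \wedge \cdots \wedge v_{j_k} \wedge n)$ coincides with $(v_{i_1} \wedge \cdots \wedge v_{i_k}) \cdot (v_{j_1} \wedge \cdots \wedge v_{j_k})$ by Lemma \ref{lemma inner product on basis}\ref{lemma inner product on basis ii}, since appending the same factor $n$ in the last slot of both wedges preserves the sign of the unique matching permutation. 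Equating these pairings for every $\gamma$ yields the claimed identity.

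I do not anticipate any genuine obstacle here; the only mildly delicate point is part \ref{lemma: multilinear algebra iii}'s second identity, where one must be careful that the sign from the matching permutation is unaffected by appending $n$ to both tensors, but this is immediate from the formula in Lemma \ref{lemma inner product on basis}\ref{lemma inner product on basis ii}.
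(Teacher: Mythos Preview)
Your proposal is correct and follows essentially the same approach as the paper: unraveling the definition for \ref{lemma: multilinear algebra i}, testing against basis elements of $\Lambda_{r-s}\Rd$ and invoking Lemma~\ref{lemma inner product on basis}\ref{lemma inner product on basis i} for \ref{lemma: multilinear algebra ii}, and then specializing \ref{lemma: multilinear algebra ii} and comparing pairings via Lemma~\ref{lemma inner product on basis} for \ref{lemma: multilinear algebra iii}. One minor point: in the last step you should cite both parts of Lemma~\ref{lemma inner product on basis} (part \ref{lemma inner product on basis i} when $\{i_1,\ldots,i_k\}\neq\{j_1,\ldots,j_k\}$ and part \ref{lemma inner product on basis ii} when they coincide), exactly as the paper does.
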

	\begin{proof}
		
		\ref{lemma: multilinear algebra i}
		The contraction $\alpha\Contraction\beta$ is the only constant such that $( \alpha \Contraction \beta ) \gamma = \alpha \cdot ( \gamma \wedge \beta ) = \gamma \alpha \cdot \beta$ for each $\gamma\in\Lambda_{0}\Rd \simeq \R$.
		
		\ref{lemma: multilinear algebra ii} Let $1\leq l_{1}<\cdots<l_{r-s} \leq d$.
		Then
		\[
		( v_{i_{1}} \wedge \cdots \wedge v_{i_{r}} \Contraction v_{j_{1}}\wedge \cdots \wedge v_{j_{s}} )\cdot ( v_{l_{1}}\wedge \cdots \wedge v_{l_{r-s}} )
		= (v_{i_{1}}\wedge \cdots \wedge v_{i_{r}}) \cdot (v_{l_{1}}\wedge \cdots \wedge v_{l_{r-s}} \wedge v_{j_{1}}\wedge \cdots \wedge v_{j_{s}}) = 0 ,
		\]
		where the latter equality is due to Lemma \ref{lemma inner product on basis}\ref{lemma inner product on basis i}, since $\{ i_{1}, \ldots, i_{r} \} \cap \{ j_{1}, \ldots, j_{s} \} = \emptyset$.
		
		\ref{lemma: multilinear algebra iii} Equation \eqref{eq: characterization of contraction when n not a component} is a direct consequence of \ref{lemma: multilinear algebra i}. As for \eqref{eq: characterization of contraction when n yes a component}, let $1\leq l_{1}<\cdots<l_{k} \leq d$ and compute
		\[
		(v_{i_{1}} \wedge \cdots \wedge v_{i_{k}} \wedge n\Contraction n) \cdot ( v_{l_{1}} \wedge \cdots \wedge v_{l_{k}} ) = ( v_{i_{1}} \wedge \cdots \wedge v_{i_{k}} \wedge n ) \cdot (v_{l_{1}} \wedge \cdots \wedge v_{l_{k}} \wedge n) .
		\]
		By Lemma \ref{lemma inner product on basis},
		\[
		( v_{i_{1}} \wedge \cdots \wedge v_{i_{k}} \wedge n ) \cdot (v_{l_{1}} \wedge \cdots \wedge v_{l_{k}} \wedge n) = 0 = ( v_{i_{1}} \wedge \cdots \wedge v_{i_{k}} ) \cdot (v_{l_{1}} \wedge \cdots \wedge v_{l_{k}} )\quad \text{if } \{i_{1} , \ldots , i_{k} \} \neq \{ l_{1} , \ldots , l_{k} \}
		\]
		and
		\[
		( v_{i_{1}} \wedge \cdots \wedge v_{i_{k}} \wedge n ) \cdot (v_{l_{1}} \wedge \cdots \wedge v_{l_{k}} \wedge n) = \sign \sigma =  ( v_{i_{1}} \wedge \cdots \wedge v_{i_{k}} ) \cdot (v_{l_{1}} \wedge \cdots \wedge v_{l_{k}} ) \quad \text{if } \{i_{1} , \ldots , i_{k} \} = \{ l_{1} , \ldots , l_{k} \} ,
		\]
		where $\sigma$ is the only permutation such that $i_{\sigma (j)} = l_j$ for all $1 \leq j \leq k$.
	\end{proof}
	
	The following type of maps are of particular importance in the development of \cite{Silhavy}.
	
	\begin{defi}\label{def: linear map with wedge product}
		Let $k \in \N$, let $A \in \mathcal{L} (\Lambda_{k} \Rd ; \Lambda_{k} \Rd )$ and let $\beta \in \Rd$. 
		We define the map $A \wedge \beta \in \mathcal{L} ( \Lambda_{k+1} \Rd ; \Lambda_{k} \Rd )$ by $( A \wedge \beta) \alpha \coloneqq A ( \alpha \Contraction \beta)$ for each  $\alpha\in\Lambda_{k+1}\Rd$.
	\end{defi}
	
	The following are properties of the map defined above.
	Recall from Section \ref{subsec: minors fixed x} the notation of the minors.
	
	\begin{lema}\label{lemma: properties of maps with wedge product}
		Let $k \in \N$, let $A \in \mathcal{L} (\Lambda_k \Rd ; \Lambda_k \Rd )$ and let $F\in\mathcal{L}(\Rd;\Rd)$. Let $\mathcal{V} = \{v_{1},\ldots,v_{d}\}$ be an orthonormal basis of $\Rd$ and consider $n = v_{d}$.
		
		\begin{enumerate}[label=(\roman*)]
			
			\item\label{properties of map with wedge iii} The map $A\wedge n\in\mathcal{L}(\Lambda_{k+1}\Rd;\Lambda_{k}\Rd)$ is characterized as follows: for $1\leq j_{1}<\cdots<j_{k+1}\leq d$,
			\begin{equation*}%\label{eq: Lambda k F wedge n seen as an extension of Lambda k F}
				(A \wedge n)v_{j_{1}} \wedge \cdots \wedge v_{j_{k+1}}
				=
				\begin{cases}
					0 & \text{if}\ j_{k+1}<d,\\
					A(v_{j_{1}} \wedge \cdots \wedge v_{j_{k}}) & \text{if}\ j_{k+1}=d.
				\end{cases}
			\end{equation*}

			\item\label{properties of map with wedge i} If $\beta \in \Rd$ then $\Lambda_0 F \wedge\beta = \beta$.

			\item\label{properties of map with wedge minors} If $1\leq j_{1} < \cdots < j_{k} \leq d$ then 
			\begin{equation*}%\label{eq: cuenta de los menores}
				\Lambda_{k}F (v_{j_{1}} \wedge \cdots \wedge v_{j_{k}}) 
				=
				\sum_{ 1\leq i_{1} < \cdots < i_{k}\leq d } M_{ \substack{i_{1}\ldots i_{k} \\ j_{1}\ldots j_{k}} }(F) v_{i_{1}} \wedge \cdots \wedge v_{i_{k}} ,
			\end{equation*}
			where the minors $M_{ \substack{i_{1}\ldots i_{k} \\ j_{1}\ldots j_{k}} }(F)$ are taken with respect to the basis $\mathcal{V}$.

			\item\label{properties of map with wedge v} If $1\leq j_{1} < \cdots < j_{k} \leq d-1$ then 
			\begin{equation*}%\label{eq: cuenta de los menores}
				(\Lambda_{k}F\wedge n) v_{j_{1}} \wedge \cdots \wedge v_{j_{k}} \wedge n 
				=
				\sum_{ 1\leq i_{1} < \cdots < i_{k}\leq d } M_{ \substack{i_{1}\ldots i_{k} \\ j_{1}\ldots j_{k}} }(F) v_{i_{1}} \wedge \cdots \wedge v_{i_{k}} .
			\end{equation*}
			
		\end{enumerate}

	\end{lema}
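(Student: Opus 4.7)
The plan is to handle the four items essentially by unwinding the definitions of the contraction and the wedge-action, and then leveraging the orthonormality of $\mathcal{V}$ through Lemma \ref{lemma: properties of contraction}. Items (i), (ii), and (iv) each reduce to a short computation once the contraction $\alpha \Contraction n$ on a wedge of basis vectors has been understood; the real content lies in (iii), which is a classical identification of the matrix of $\Lambda_k F$ with the matrix of $k$-minors of $F$.

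For item (i) I would apply the definition $(A\wedge n)\alpha = A(\alpha\Contraction n)$ to $\alpha = v_{j_1}\wedge\cdots\wedge v_{j_{k+1}}$. If $j_{k+1}<d$ then $n=v_d$ is not among the $v_{j_\ell}$, so by \eqref{eq: characterization of contraction when n not a component} the contraction vanishes and so does $(A\wedge n)\alpha$. If $j_{k+1}=d$ then \eqref{eq: characterization of contraction when n yes a component} yields $\alpha\Contraction n = v_{j_1}\wedge\cdots\wedge v_{j_k}$, and applying $A$ gives the stated formula. Item (ii) is then immediate: under the identification $\Lambda_0 F \simeq 1$, the map $\Lambda_0 F \wedge\beta \in \mathcal{L}(\R^d;\R)$ sends $\alpha\in\Lambda_1\R^d\simeq\R^d$ to $\alpha\Contraction\beta$, and Lemma \ref{lemma: properties of contraction}\ref{lemma: multilinear algebra i} identifies this with $\alpha\cdot\beta$, which under the standard identification of $\R^d$ with $\mathcal{L}(\R^d;\R)$ is exactly $\beta$.

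For item (iii) I would expand $\Lambda_k F(v_{j_1}\wedge\cdots\wedge v_{j_k}) = Fv_{j_1}\wedge\cdots\wedge Fv_{j_k}$ by writing each column as $Fv_{j_\ell} = \sum_{i=1}^d F_{i,j_\ell}\,v_i$, using multilinearity to distribute the wedge, and then using antisymmetry to collect, for every strictly increasing tuple $1\leq i_1<\cdots<i_k\leq d$, the coefficient
\[
\sum_{\sigma\in\mathcal{P}_k}\sign\sigma\,\prod_{\ell=1}^{k} F_{i_{\sigma(\ell)},\,j_\ell},
\]
which is precisely $M_{\substack{i_1\ldots i_k\\ j_1\ldots j_k}}(F)$ with respect to the basis $\mathcal{V}$. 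The bookkeeping of signs through the wedge is the one step that requires some care, but it is completely routine and is actually the defining property of the exterior power in coordinates. Finally, item (iv) is a direct combination: since the $(k+1)$-tuple $(v_{j_1},\dots,v_{j_k},n)$ has its last entry equal to $v_d$, item (i) with $A = \Lambda_k F$ gives $(\Lambda_k F\wedge n)(v_{j_1}\wedge\cdots\wedge v_{j_k}\wedge n) = \Lambda_k F(v_{j_1}\wedge\cdots\wedge v_{j_k})$, and then (iii) yields the desired expansion.

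The only genuinely delicate point is the sign accounting in (iii); everything else is a sequence of short identifications from the previously established lemmas. I do not anticipate any essential obstacle.
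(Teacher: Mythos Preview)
Your proposal is correct and follows essentially the same route as the paper: item (i) is handled via Lemma \ref{lemma: properties of contraction}\ref{lemma: multilinear algebra iii}, item (ii) via Lemma \ref{lemma: properties of contraction}\ref{lemma: multilinear algebra i}, item (iii) by expanding each $Fv_{j_\ell}$ in the basis $\mathcal{V}$ and collecting signed products into minors, and item (iv) by combining (i) and (iii). The paper's proof is slightly terser in (iii) (it writes the expansion without explicitly invoking the permutation-sum form of the minor), but the argument is the same.
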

	\begin{proof}
		\ref{properties of map with wedge iii} By Lemma \ref{lemma: properties of contraction}\ref{lemma: multilinear algebra iii}, if $j_{k+1}< d$,
		\begin{equation*}
			(A\wedge n)v_{j_{1}} \wedge \cdots \wedge v_{j_{k+1}}
			=
			A (v_{j_{1}} \wedge \cdots \wedge v_{j_{k+1}} \Contraction n)
			=
			0
		\end{equation*}
		while
		\begin{equation*}
			\begin{split}
				(A\wedge n) v_{j_{1}} \wedge \cdots \wedge v_{j_{k}} \wedge n
				=
				A (v_{j_{1}} \wedge \cdots \wedge v_{j_{k}} \wedge n\Contraction n)
				=
				A (v_{j_{1}} \wedge \cdots \wedge v_{j_{k}} ).
			\end{split}
		\end{equation*}
		
		\ref{properties of map with wedge i} Let $\alpha\in\Lambda_{1}\Rd \simeq \Rd$. By Lemma \ref{lemma: properties of contraction}\ref{lemma: multilinear algebra i} we have that $(\Lambda_0 F\wedge\beta)\alpha = \Lambda_0 F ( \alpha \Contraction \beta) = \alpha \Contraction \beta = \alpha\cdot\beta$.
		
		\ref{properties of map with wedge minors} 
		Let $f_{ij} = F v_j \cdot v_i$ for each $1 \leq i, j \leq d$.
		We compute 	
		\begin{equation*}
			\begin{split}
				\Lambda_{k}F (v_{j_{1}} \wedge \cdots \wedge v_{j_{k}}) &= Fv_{j_{1}} \wedge \cdots \wedge Fv_{j_{k}} \\
				&= \sum_{i_{1}=1}^{d}f_{i_{1}j_{1}}v_{i_{1}} \wedge \cdots \wedge \sum_{i_{k}=1}^{d}f_{i_{k}j_{k}}v_{i_{k}} \\
				& = \sum_{1\leq i_{1},\ldots, i_{k}\leq d} f_{i_{1}j_{1}} \cdots f_{i_{k}j_{k}} v_{i_{1}} \wedge \cdots \wedge v_{i_{k}} \\
				& = \sum_{1\leq i_{1}<\cdots< i_{k}\leq d} M_{\substack{ i_{1},\ldots,i_{k} \\ j_{1},\ldots,j_{k} }}(F) v_{i_{1}} \wedge \cdots \wedge v_{i_{k}}.
			\end{split}
		\end{equation*}
		
		\ref{properties of map with wedge v} We have that 
		\begin{align*}
			(\Lambda_{k} F \wedge n) v_{j_{1}}\wedge \cdots \wedge v_{j_{k}} \wedge n = \Lambda_{k} F (v_{j_{1}}\wedge \cdots \wedge v_{j_{k}}) & = \sum_{1\leq i_{1}<\cdots< i_{k}\leq d} M_{\substack{ i_{1},\ldots,i_{k} \\ j_{1},\ldots,j_{k} }}(F) v_{i_{1}} \wedge \cdots \wedge v_{i_{k}} 
		\end{align*}
		by \ref{properties of map with wedge iii} and \ref{properties of map with wedge minors}.
	\end{proof}

	As seen in Lemma \ref{lemma: properties of maps with wedge product}, the coefficients of $\Lambda_{k}F \wedge n$ with respect to $\mathcal{V}_{k+1}$ and $\mathcal{V}_{k}$ are either zero or the minors of $F$ involving $\{v_{1},\ldots,v_{d-1}\}$.
	
	We now relate the tangential polyconvexity from Definition \ref{defi tangential polyconvexity} with the maps introduced in Definition \ref{def: linear map with wedge product}.
	In the rest of the section we will use the set $\mathcal{G} \coloneqq \{(F,n)\in\mathcal{L}(\Rd;\R^{m}) \times S^{d-1} : Fn=0 \}$. Besides, returning to the notation of the minors from Section \ref{subsec: minors fixed x}, when the chosen bases have a dependence on some $x$ we will stress this dependence denoting by $M_{x}$, $M_{k,x}$, $M_{x}^{0}$, $M_{x}^{1}$, $M_{k, x}^{0}$ and $M_{k, x}^{1}$ the sequences of minors $M$, $M_{k}$, $M^{0}$, $M^{1}$, $M_{k}^{0}$ and $M_{k}^{1}$ in such bases, respectively.
	
	\begin{prop}\label{prop: interface implies tangentially}
		Let $\Omega\subset\Rd$ be a Lipschitz domain, let $n : \partial \Omega \rightarrow \Rd$ be a measurable map such that $n(x)$ is a unit orthogonal vector to $T_{x}\partial\Omega$ for $\mathcal{H}^{d-1}$-a.e.\ $x \in \partial \Omega$.
		Let $\hat{f}:\mathcal{G}\rightarrow\R\cup\{\infty\}$ be such that there exists a convex map $\Psi: \prod_{k=0}^{d-1}\mathcal{L}( \Lambda_{k+1}\Rd;\Lambda_{k}\R^{m} ) \rightarrow \R\cup\{\infty\}$
		with
		\begin{equation*}
			\hat{f}(F,n)
			=
			\Psi(\Lambda_{0}F\wedge n,\ldots, \Lambda_{d-1}F\wedge n) , \qquad (F,n)\in \mathcal{G} .
		\end{equation*}
		Define $W_{0}:T^{d}\partial\Omega \rightarrow \R$ as $W_{0}(x,F) \coloneqq \hat{f}(F_x, n(x))$ where $F_x$ is the linear extension of $F$ to $\Rd$ such that $F_x n(x)=0$ for $\mathcal{H}^{d-1}$-a.e.\ $x\in\partial\Omega$. Then $W_{0}$ is tangentially polyconvex.
	\end{prop}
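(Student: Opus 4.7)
The strategy is to choose a measurable orthonormal basis of $T\partial\Omega$ in which each $\Lambda_k F_x\wedge n(x)$ becomes an affine function of $M(F)$, and then exploit that convexity is preserved by precomposition with an affine map: composing $\Psi$ with this affine function produces the $\Phi$ required by Definition~\ref{defi tangential polyconvexity}.

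I would first build a measurable orthonormal basis $\mathcal{V}=\{v_1,\ldots,v_{d-1}\}$ of $T\partial\Omega$ (for instance by measurable Gram--Schmidt applied to the $L^\infty$ basis supplied by the lemma preceding Proposition~\ref{convergencia de menores}) and set $v_d(x)\coloneqq n(x)$, so that $\{v_1(x),\ldots,v_d(x)\}$ is an orthonormal basis of $\R^d$ of the form required by \eqref{eq:vi3}. By Proposition~\ref{prop: indpendence of measurable basis}, tangential polyconvexity is insensitive to the choice of basis, so it suffices to produce $\Phi$ with respect to this particular $\mathcal{V}$, and $M(F)$ will always be taken with respect to $\mathcal{V}_x$ and the canonical basis of $\R^d$.

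Next, I would apply Lemma~\ref{lemma: properties of maps with wedge product} to $F_x$, which satisfies $F_x n(x)=0$, for $\mathcal{H}^{d-1}$-a.e.\ $x\in\partial\Omega$. Part~\ref{properties of map with wedge i} gives $\Lambda_0 F_x\wedge n(x)=n(x)$, which is independent of $F$. Part~\ref{properties of map with wedge iii} says that $\Lambda_k F_x\wedge n(x)$ vanishes on every basis element of $\Lambda_{k+1}\R^d$ not involving $n(x)$, while part~\ref{properties of map with wedge v} identifies its values on the remaining basis vectors $v_{j_1}\wedge\cdots\wedge v_{j_k}\wedge n(x)$, $1\leq j_1<\cdots<j_k\leq d-1$, as explicit linear combinations of the minors of $F_x$ with column indices in $\{1,\ldots,d-1\}$. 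By Remark~\ref{remark: existence of bases} we have $M^1_x(F_x)=0$, and by \eqref{eq:M0kMk} the remaining minors $M^0_x(F_x)$ coincide with $M(F)$. Assembling these identities produces a map
\[
T_x:\R^{\nu_{d-1}}\longrightarrow\prod_{k=0}^{d-1}\mathcal{L}(\Lambda_{k+1}\R^d;\Lambda_k\R^m),\qquad T_x(m)\coloneqq L_x m+c_x,
\]
which is affine in $m$, measurable in $x$, and satisfies $T_x(M(F))=(\Lambda_0 F_x\wedge n(x),\ldots,\Lambda_{d-1}F_x\wedge n(x))$ for every $F\in\mathcal{L}(T_x\partial\Omega;\R^d)$.

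Finally, I would set $\Phi(x,m)\coloneqq\Psi(T_x m)$. Since $T_x$ is affine and $\Psi$ is convex, $\Phi(x,\cdot)$ is convex for $\mathcal{H}^{d-1}$-a.e.\ $x\in\partial\Omega$; and by construction
\[
W_0(x,F)=\hat f(F_x,n(x))=\Psi\bigl(T_x(M(F))\bigr)=\Phi(x,M(F)),
\]
which is exactly the tangential polyconvexity of $W_0$. The only mildly delicate points are the measurable selection of the orthonormal tangential basis (which is routine) and the bookkeeping required to identify, via Lemma~\ref{lemma: properties of maps with wedge product}, the coefficients of $\Lambda_k F_x\wedge n(x)$ with the corresponding entries of $M(F)$; everything else is formal.
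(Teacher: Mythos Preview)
Your proposal is correct and follows essentially the same approach as the paper: choose an orthonormal measurable frame $\{v_1,\ldots,v_{d-1},n\}$, use Lemma~\ref{lemma: properties of maps with wedge product} to write each $\Lambda_k F_x\wedge n(x)$ as a linear (constant for $k=0$) function of the minors $M(F)$ of the restriction, and then set $\Phi(x,\cdot)=\Psi\circ T_x$. The paper packages the linear dependence via the auxiliary isomorphisms $\mathfrak{F}_{k,x}:\R^{\theta_k}\to\mathcal{L}(\Lambda_k\Rd;\Lambda_k\Rd)$ and then applies $\cdot\wedge n(x)$, whereas you go directly to the map $T_x$; the underlying argument is the same.
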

	\begin{proof}
		Let $\{ v_{1},\ldots,v_{d-1} \}$ be an orthonormal measurable basis of $T\partial\Omega$ and consider $v_{d}=n$, then $\mathcal{V} (x) := \{v_{1} (x),\ldots, v_{d} (x)\}$ is an orthonormal basis of $\Rd$, for $\mathcal{H}^{d-1}$-a.e.\ $x\in\partial\Omega$.
		Let $1 \leq k \leq d-1$ and define $\theta_{k} = \binom{d}{k}^2$.
		We number the elements of $\R^{\theta_k}$ as $( a_{j_{1},\ldots,j_{k}}^{i_{1},\ldots,i_k} )_{1 \leq j_{1} < \cdots < j_{k} \leq d}^{1 \leq i_{1} < \cdots < i_k \leq d}$.
		Define the linear map $\mathfrak{F}_{k,x} : \R^{\theta_k} \rightarrow \mathcal{L}(\Lambda_{k}\Rd ; \Lambda_{k}\Rd)$ as follows: $\mathfrak{F}_{k,x} \left( ( a_{j_{1},\ldots,j_{k}}^{i_{1},\ldots,i_k} )_{1 \leq j_{1} < \cdots < j_{k} \leq d}^{1 \leq i_{1} < \cdots < i_k \leq d} \right)$ is the only linear map such that for each $1 \leq j_{1} < \cdots < j_k \leq d$,
		\[
		\mathfrak{F}_{k,x} \left( ( a_{j_{1},\ldots,j_{k}}^{i_{1},\ldots,i_k} )_{1 \leq j_{1} < \cdots < j_{k} \leq d}^{1 \leq i_{1} < \cdots < i_k \leq d} \right) (v_{j_{1}} (x) \wedge \cdots \wedge v_{j_k} (x))
		=
		\sum_{ 1\leq i_{1} <\cdots< i_{k}\leq d } a_{j_{1},\ldots,j_{k}}^{i_{1},\ldots,i_k} v_{i_{1}} (x) \wedge \cdots \wedge v_{i_{k}} (x).
		\]
		
		Recalling the order of the minors established in Section \ref{subsec: minors fixed x} and thanks to Lemma \ref{lemma: properties of maps with wedge product}\ref{properties of map with wedge minors} we have that $\mathfrak{F}_{k,x} (M_{k,x}(F_x)) = \Lambda_k F_x$. Now define 
		\begin{align*}
			\Phi : \partial\Omega \times \R^{\nu_{d-1}} & \rightarrow \R\\
			(x , (a_{1}, \ldots, a_{ \nu_{d-1} }) ) & \mapsto \Psi( n(x) , \mathfrak{F}_{1,x}( a_{1}, \ldots, a_{\nu_{1}} )\wedge n(x), \ldots, \mathfrak{F}_{d-1,x}( a_{\nu_{d-2}+1}, \ldots, a_{\nu_{d-1}} )\wedge n(x) ) .
		\end{align*}
		Then, $\Phi (x, \cdot)$ is convex for $\mathcal{H}^{d-1}$-a.e.\ $x\in\partial\Omega$, as a composition of a linear with a convex map.
		Moreover, for $(x, F) \in T^d \partial \Omega$, 
		\[
		W_{0}(x,F)
		=
		\hat{f}(F_x ,n(x))
		=
		\Psi(\Lambda_{0}F_x \wedge n(x),\ldots, \Lambda_{d-1}F_x \wedge n(x))  , \qquad (x, F) \in T^d \partial \Omega .
		\]
		Now,
		\[
		M_{x}(F) = \left( M^{0}_{x} (F), M_{x}^{1} (F) \right) =  \left( M_{x}^{0} (F_x), M_{x}^{1} (F) \right) =  \left( M_{1,x} (F_x), \ldots, M_{d-1,x} (F_x), M_{x}^{1} (F)  \right) ,
		\]
		so, recalling Lemma \ref{lemma: properties of maps with wedge product}\ref{properties of map with wedge i} we have that
		\begin{align*}
			\Phi(x, M_{x}(F)) & = \Phi\left( x, M_{1,x} (F_x), \ldots, M_{d-1,x} (F_x), M_{x}^{1} (F) \right) \\
			& = \Psi( n(x) , \mathfrak{F}_{1,x}( M_{1,x} (F_x) ) \wedge n(x), \ldots, \mathfrak{F}_{d-1,x}( M_{d-1,x} (F_x) ) \wedge n(x) ) \\
			& =  \Psi (n(x), \Lambda_{1} F_x \wedge n(x),\ldots, \Lambda_{d-1}F_x \wedge n(x))\\
			&= \Psi(\Lambda_{0}F_x \wedge n(x), \Lambda_{1} F_x \wedge n(x), \ldots, \Lambda_{d-1}F_x \wedge n(x)) ,
		\end{align*}
		which proves the result.
	\end{proof}
	
	Note that condition $Fn=0$ in the definition of $\mathcal{G}$ does not play an essential role: in the proof above we pass from a linear map defined in $T_x \partial \Omega$ to a linear extension to $\Rd$, and $Fn=0$ just fixes a specific extension. A partial converse to the above result also holds.
	
	\begin{prop}\label{prop tangentially implies interface}
		Let $\Omega\subset\Rd$ be of class $C^1$, let $n : \partial \Omega \rightarrow S^{d-1}$ be the unit outward normal to $\Omega$.
		Then there exists a measurable map $S^{d-1} \ni m \mapsto x_m \in \partial\Omega$ such that $n (x_m) = m$ for all $m \in S^{d-1}$.
		Now, let $U:T^{d}\partial\Omega\rightarrow \R$ be tangentially polyconvex.
		Define $\hat{f} : \mathcal{G} \rightarrow \R \cup \{\infty\}$ as 
		$
		\hat{f}( F , m)
		\coloneqq
		U(x_m , F_{| T_{x_m} \partial\Omega} ) .
		$ 
		Then there exists a measurable map $\Psi: S^{d-1} \times \prod_{k=0}^{d-1}\mathcal{L}( \Lambda_{k+1}\Rd;\Lambda_{k}\R^{m} ) \rightarrow \R\cup\{\infty\}$ such that $\Psi (m , \cdot)$ is convex for all $m \in S^{d-1}$ and 
		\begin{equation*}
			\hat{f}(F,n)
			=
			\Psi(n, \Lambda_{0}F\wedge n,\ldots, \Lambda_{d-1}F\wedge n) , \qquad (F,n)\in \mathcal{G} .
		\end{equation*}
	\end{prop}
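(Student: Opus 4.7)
The proof naturally splits in two parts: producing a measurable Gauss-section $m\mapsto x_{m}$, and then reading off the minors of $F_{|T_{x_{m}}\partial\Omega}$ from the forms $\Lambda_{k}F\wedge n$ in order to assemble $\Psi$.

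For the first part, since $\Omega$ is $C^{1}$ and bounded, the Gauss map $n:\partial\Omega\rightarrow S^{d-1}$ is continuous and surjective: for every $m\in S^{d-1}$ the supporting hyperplane of $\overline{\Omega}$ with outward normal $m$ meets $\partial\Omega$ at a point whose outward normal is $m$. Hence the multivalued map $m\mapsto n^{-1}(m)$ has nonempty closed values in the compact metric space $\partial\Omega$, and the Kuratowski--Ryll-Nardzewski selection theorem yields a Borel measurable selection $m\mapsto x_{m}$.

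For the second part, choose a Borel measurable orthonormal basis $\{v_{1}(m),\ldots,v_{d-1}(m)\}$ of $m^{\perp}$ for each $m\in S^{d-1}$ (such a measurable choice always exists, even though for $d$ even a continuous one is topologically forbidden), and set $v_{d}(m)=m$. Define $\tilde{v}_{i}(x):=v_{i}(n(x))$, which is a measurable orthonormal basis of $T\partial\Omega$ satisfying $\tilde{v}_{i}(x_{m})=v_{i}(m)$. By the tangential polyconvexity of $U$ together with Proposition \ref{prop: indpendence of measurable basis}, there exists $\Phi:\partial\Omega\times\R^{\nu_{d-1}}\rightarrow\R$, with $\Phi(x,\cdot)$ convex for $\mathcal{H}^{d-1}$-a.e.\ $x\in\partial\Omega$, such that $U(x,F)=\Phi(x,M(F))$ when the minors are computed in $\{\tilde{v}_{1}(x),\ldots,\tilde{v}_{d-1}(x)\}$ and the canonical basis of $\Rd$.

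Finally, fix $(F,n)\in\mathcal{G}$ and apply Lemma \ref{lemma: properties of maps with wedge product}\ref{properties of map with wedge v} in the orthonormal basis $\{v_{1}(n),\ldots,v_{d-1}(n),n\}$ of $\Rd$: the coefficients of $\Lambda_{k}F\wedge n$ in the associated wedge bases of $\Lambda_{k+1}\Rd$ and $\Lambda_{k}\Rd$ are exactly the minors $M_{\substack{i_{1}\ldots i_{k} \\ j_{1}\ldots j_{k}}}(F)$ with $j_{k}\leq d-1$, which by \eqref{eq:M0kMk} coincide with the minors of $F_{|T_{x_{n}}\partial\Omega}$ in the basis $v(n)$; converting to minors with respect to the canonical basis of $\Rd$ on the codomain amounts to a further linear transformation (Cauchy--Binet) depending measurably on $n$. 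Bundling these steps gives, for each $m\in S^{d-1}$, a linear map
\[
\mathfrak{L}_{m}:\prod_{k=0}^{d-1}\mathcal{L}(\Lambda_{k+1}\Rd;\Lambda_{k}\Rd)\rightarrow\R^{\nu_{d-1}},
\]
measurable in $m$, such that $\mathfrak{L}_{n}(\Lambda_{0}F\wedge n,\ldots,\Lambda_{d-1}F\wedge n)=M(F_{|T_{x_{n}}\partial\Omega})$ in the basis $\tilde{v}(x_{n})$. Setting
\[
\Psi(m,A_{0},\ldots,A_{d-1}):=\Phi\bigl(x_{m},\mathfrak{L}_{m}(A_{0},\ldots,A_{d-1})\bigr)
\]
yields a map which is convex in the last arguments, as the composition of a convex map with a linear one, and which satisfies $\Psi(n,\Lambda_{0}F\wedge n,\ldots,\Lambda_{d-1}F\wedge n)=U(x_{n},F_{|T_{x_{n}}\partial\Omega})=\hat{f}(F,n)$. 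The main obstacle is the measurable-selection step, combining the Gauss-section $m\mapsto x_{m}$ with a measurable choice of orthonormal frame on the varying hyperplane $m^{\perp}$; once this framework is in place, the extraction of minors from $\Lambda_{k}F\wedge n$ is a direct consequence of Lemma \ref{lemma: properties of maps with wedge product}.
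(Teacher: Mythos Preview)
Your proof is correct and follows essentially the same route as the paper's: a measurable selection for the Gauss map, then extraction of the minors of $F_{|T_{x_m}\partial\Omega}$ from the coefficients of $\Lambda_k F\wedge n$ via Lemma~\ref{lemma: properties of maps with wedge product}, and finally composition with the convex $\Phi$ coming from tangential polyconvexity. The only organizational difference is that you parametrize the orthonormal frame directly by $m\in S^{d-1}$ and pull it back to $\partial\Omega$ through $n$, whereas the paper builds the frame on $\partial\Omega$ and pushes forward through $m\mapsto x_m$; both lead to the same linear extraction maps. Your explicit mention of the Cauchy--Binet conversion to the canonical codomain basis is in fact slightly more careful than the paper's treatment, which leaves that step implicit. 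One small omission: for Kuratowski--Ryll-Nardzewski you should check weak measurability of $m\mapsto n^{-1}(m)$, i.e.\ that $\{m:n^{-1}(m)\cap\mathcal{O}\neq\emptyset\}=n(\mathcal{O})$ is Borel for every open $\mathcal{O}\subset\partial\Omega$; this is immediate since $n$ is continuous and $\mathcal{O}$ is $\sigma$-compact.
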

	\begin{proof}
		The normal $n$ is in fact the Gauss map of $\partial \Omega$, which is known to be surjective (see, e.g., \cite[Chapter 6]{JAThorpe}).
		Now let $F: S^{d-1} \rightarrow \mathcal{P} (\partial\Omega)$ be the set-valued map defined by $F(m) \coloneqq n^{-1}(m)$.
		As $n$ is continuous, $F(m)$ is closed.
		Moreover, as $n$ is surjective, $F(m)$ is non-empty.
		Now we show that $F$ is measurable in the sense of \cite[Definition 8.1.1]{AubinFrankowska}: for each relatively open subset $\mathcal{O}\subseteq\partial\Omega$, the set $\{ m\in S^{d-1} : F(m) \cap \mathcal{O} \neq \emptyset \}$ must be Borel.
		To check this, we express
		\[
		\{ m\in S^{d-1} : F(m) \cap \mathcal{O} \neq \emptyset \} = n (\mathcal{O}) 
		\]
		and $\mathcal{O}$ as a countable union of compact sets: $\mathcal{O} = \bigcup_{m=1}^{\infty} K_{m}$.
		Since $n$ is continuous, $n (K_m)$ is compact for each $m \in \N$, so $n(\mathcal{O})$ is Borel as a countable union of compact sets.
		An application of \cite[Theorem 8.1.3]{AubinFrankowska} concludes that there exists a measurable map $S^{d-1} \ni m \mapsto x_m \in \partial\Omega$ such that $n (x_m) = m$.
		
		There exist 
		$
		\mathcal{V} = \{ v_{1}, \ldots, v_{d-1} \} 
		$
		a measurable basis of $T\partial\Omega$ and map $\Phi : \partial\Omega \times \R^{\nu_{d-1}} \rightarrow \R$ such that $\Phi(x,\cdot)$ is convex for $\mathcal{H}^{d-1}$-a.e.\ $x\in\partial\Omega$ and 
		$
		U(x,F ) 
		=
		\Phi(x , M(F))
		$
		for $\mathcal{H}^{d-1}$-a.e.\ $( x,F ) \in T^{d}\partial\Omega$, where $M(F)$ is taken with respect to the basis $\{ v_{1}(x), \ldots, v_{d-1}(x) \} $ and the canonical basis in $\Rd$. 
		Let $v_{d}=n$ and consider $\tilde{\mathcal{V}} = \{v_{1},\ldots,v_{d-1},v_{d}\}$ as a measurable orthonormal basis of $\Rd$.
		Let $k \leq d-1$ and $\tilde{\theta}_k = \binom{d-1}{k}\binom{d}{k}$; we number the elements of $\R^{\tilde{\theta}_k}$ as $( a_{j_{1},\ldots,j_{k}}^{i_{1},\ldots,i_{k}} )_{1 \leq j_{1} < \cdots < j_{k} \leq d}^{1 \leq i_{1} < \cdots < i_{k} \leq d-1}$.
		Define the linear map 
		\begin{align*}
			\mathfrak{C}_{k,x} : \mathcal{L}( \Lambda_{k+1}\Rd ; \Lambda_{k}\Rd ) & \rightarrow \R^{\tilde{\theta}_k}\\
			A & \mapsto \left( A (v_{j_{1}}(x) \wedge \cdots \wedge v_{j_{k}}(x) \wedge n(x)) \cdot ( v_{i_{1}}(x) \wedge \cdots \wedge v_{i_{k}}(x) ) \right)_{ 1 \leq j_{1}<\cdots<j_{k} \leq d-1}^{1 \leq i_{1}<\cdots < i_{k} \leq d } .
		\end{align*}
		Note that map $\mathfrak{F}_{k,x}$ of Proposition \ref{prop: interface implies tangentially} is an isomorphism with inverse
		\begin{align*}
			\mathfrak{F}_{k,x}^{-1} : \mathcal{L}( \Lambda_{k}\Rd ; \Lambda_{k}\Rd ) & \rightarrow \R^{ \theta_k }\\
			A & \mapsto \left(  A ( v_{j_{1}} (x) \wedge \cdots \wedge v_{j_{k}} (x)) \cdot ( v_{i_{1}} (x) \wedge \cdots \wedge v_{i_{k}} (x) ) \right)_{1 \leq j_{1}<\cdots<j_{k} \leq d}^{1 \leq i_{1}<\cdots<i_{k} \leq d}.
		\end{align*}
		As a consequence of Lemma \ref{lemma: properties of maps with wedge product}\ref{properties of map with wedge iii}, if $B\in\mathcal{L}(\Lambda_{k}\Rd ; \Lambda_{k}\Rd)$, then
		\[
		\mathfrak{C}_{k,x}( B\wedge n ) = \left( \mathfrak{F}_{k,x} ^{-1}( B ) \right)_{ 1 \leq j_{1}<\cdots<j_{k} \leq d-1}^{1 \leq i_{1}<\cdots < i_{k} \leq d } .
		\]
		Using Lemma \ref{lemma: properties of maps with wedge product}\ref{properties of map with wedge minors}, if $F \in \mathcal{L} (\Rd; \Rd)$ then $\mathfrak{F}_{k,x} ^{-1}( \Lambda_k F ) = M_{k,x} (F)$ with respect to $\tilde{\mathcal{V}}_k (x)$.
		By Lemma \ref{lemma: properties of maps with wedge product}\ref{properties of map with wedge v}, $\mathfrak{C}_{k,x}( \Lambda_k F\wedge n ) = M^0_{k,x} (F)$.
		Finally, by \eqref{eq:M0kMk}, $M^0_{k,x} (F) = M_{k,x} (F_{|T_x \partial \Omega})$.
		Altogether,
		\begin{equation}\label{eq:CkxLF}
			\mathfrak{C}_{k,x}( \Lambda_k F\wedge n ) = M_{k,x} (F_{|T_x \partial \Omega}) .
		\end{equation}

		Now for each $x \in \partial \Omega$, define 
		\begin{align*}
			\Psi_x : \prod_{k=0}^{d-1} \mathcal{L}(\Lambda_{k+1}\Rd;\Lambda_{k}\Rd)  & \rightarrow \R\\
			(A_{0}, \ldots, A_{d-1}) & \mapsto \Phi(x, ( \mathfrak{C}_{1,x}(A_{1}), \ldots , \mathfrak{C}_{d-1,x}(A_{d-1}) )),
		\end{align*}
		which is convex, as a composition of a convex and a linear map.
		Thanks to \eqref{eq:CkxLF}, for all $F \in \mathcal{L} (\Rd; \Rd)$,
		\[
		\Psi_x (\Lambda_0 F \wedge n(x), \ldots , \Lambda_{d-1}F \wedge n(x) ) = \Phi (x, (M_{1, x} (F_{|T_x \partial \Omega}), \ldots , M_{d-1, x} (F_{|T_x \partial \Omega}) )) = \Phi (x, M (F_{|T_{x} \partial \Omega})) .
		\]
		Then, for all $(F, m) \in \mathcal{G}$,
		\[
		\hat{f} (F, m) 
		=U (x_m, F_{|T_{x_m} \partial \Omega}) 
		=
		\Phi (x_m, M (F_{|T_{x_m} \partial \Omega}))
		=
		\Psi_{x_m} (\Lambda_0 F \wedge m, \ldots , \Lambda_{d-1} F \wedge m ) .
		\]
		The proof is concluded by defining $\Psi (m, A_0, \ldots, A_{d-1}) \coloneqq \Psi_{x_m} (A_0, \ldots, A_{d-1})$.
	\end{proof}
	
	We remarked after Proposition \ref{prop: interface implies tangentially} that condition $Fn=0$ in the definition of $\mathcal{G}$ is not essential.
	The following is a precise statement of this fact.
	
	\begin{prop}\label{prop: equiv between 3 definitions of interface polyconvexity}
		Let $d,m\in\N$, let $t = \min\{d-1, m\}$ and $\hat{f} : \mathcal{G} \rightarrow \R\cup\{\infty\}$.
		The following statements are equivalent:
		\begin{enumerate}[label=(\roman*)]
			\item\label{i: equivalence proposition def interf poly 1} There exists a convex map $\Psi : \prod_{k=0}^{t} \mathcal{L}(\Lambda_{k+1}\Rd;\Lambda_{k}\R^{m}) \rightarrow \R\cup\{\infty \}$ such that $\hat{f}(F,n) = \Psi(\Lambda_{0}F\wedge n, \Lambda_{1}F \wedge n,\ldots, \Lambda_{t}F\wedge n)$ for each $(F,n)\in\mathcal{G}$.
			
			\item\label{iii: equivalence proposition def interf poly 3} There exist $f : \mathcal{L}( \Rd ; \R^{m} ) \times S^{d-1} \rightarrow \R \cup \{\infty\}$ an extension of $\hat{f}$ and $\Psi : \prod_{k=0}^{t} \mathcal{L}(\Lambda_{k+1}\Rd;\Lambda_{k}\R^{m}) \rightarrow \R\cup\{\infty \}$ convex, such that $f(F,n) = \Psi(\Lambda_{0}F\wedge n, \Lambda_{1}F \wedge n,\ldots, \Lambda_{t}F\wedge n)$ for each $(F,n)\in \mathcal{L}(\Rd;\R^{m})\times S^{d-1}$.
		\end{enumerate}
	\end{prop}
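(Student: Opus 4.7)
The plan is to observe that both statements revolve around the same representing convex function $\Psi$; the only difference is whether the identity involving the operators $\Lambda_k(\cdot) \wedge n$ is required to hold on the restricted set $\mathcal{G}$ (where $Fn=0$) or on all of $\mathcal{L}(\Rd;\R^{m}) \times S^{d-1}$. Since the expression $(F,n) \mapsto \Psi(\Lambda_0 F \wedge n, \ldots, \Lambda_t F \wedge n)$ is perfectly well-defined regardless of whether $Fn$ vanishes, passing between the two statements amounts to either restricting or extending using the same $\Psi$, and no modification of the convex representing map is needed.

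For the direction \ref{iii: equivalence proposition def interf poly 3} $\Rightarrow$ \ref{i: equivalence proposition def interf poly 1}, I would take the $\Psi$ and $f$ provided by \ref{iii: equivalence proposition def interf poly 3} and simply restrict the defining identity to $(F,n) \in \mathcal{G} \subset \mathcal{L}(\Rd;\R^{m}) \times S^{d-1}$. Since $f|_{\mathcal{G}} = \hat{f}$ by hypothesis, this immediately gives $\hat{f}(F,n) = \Psi(\Lambda_0 F \wedge n, \ldots, \Lambda_t F \wedge n)$ on $\mathcal{G}$, with the same convex $\Psi$ serving as witness.

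For the direction \ref{i: equivalence proposition def interf poly 1} $\Rightarrow$ \ref{iii: equivalence proposition def interf poly 3}, I would keep the $\Psi$ given by \ref{i: equivalence proposition def interf poly 1} and define the extension by the formula
\[
f(F,n) \coloneqq \Psi(\Lambda_0 F \wedge n, \Lambda_1 F \wedge n, \ldots, \Lambda_t F \wedge n), \qquad (F,n) \in \mathcal{L}(\Rd;\R^{m}) \times S^{d-1}.
\]
The identity required in \ref{iii: equivalence proposition def interf poly 3} is automatic from this definition, and it remains only to check that $f$ is an extension of $\hat{f}$, which follows because for $(F,n) \in \mathcal{G}$ the right-hand side collapses to $\hat{f}(F,n)$ by hypothesis \ref{i: equivalence proposition def interf poly 1}.

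I do not anticipate any serious obstacle: the proposition is essentially tautological at the logical level once one realises that the representing identity does not reference the constraint $Fn = 0$. The deeper geometric content behind it, which is exploited in Propositions \ref{prop: interface implies tangentially} and \ref{prop tangentially implies interface}, is that $\Lambda_k F \wedge n$ depends on $F$ only through $F|_{n^{\perp}}$, by Lemma \ref{lemma: properties of maps with wedge product}\ref{properties of map with wedge iii} and \ref{properties of map with wedge v}; consequently one could even produce a canonical section of the extension by replacing $F$ with $F - (Fn) \otimes n$, but this observation is not logically required to close the argument.
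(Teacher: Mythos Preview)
Your proposal is correct and matches the paper's proof essentially verbatim: the paper also observes that \ref{i: equivalence proposition def interf poly 1} follows from \ref{iii: equivalence proposition def interf poly 3} by restriction to $\mathcal{G}$, and conversely defines the extension $f$ by the same formula $f(F,n) \coloneqq \Psi(\Lambda_{0}F\wedge n, \ldots, \Lambda_{t}F\wedge n)$. Your closing remark about the deeper geometric content is accurate but, as you note, not needed for the proof.
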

	\begin{proof}
		Since $\mathcal{G} \subset \mathcal{L}( \Rd ; \R^{m} ) \times S^{d-1}$, statement \ref{i: equivalence proposition def interf poly 1} is a straightforward consequence of \ref{iii: equivalence proposition def interf poly 3}. The converse implication is also trivial definining the extension $f$ by $f(F,n) \coloneqq \Psi(\Lambda_{0}F\wedge n, \Lambda_{1}F \wedge n,\ldots, \Lambda_{t}F\wedge n)$ for each $(F,n)\in \mathcal{L}( \Rd ; \R^{m} ) \times S^{d-1}$.
	\end{proof}
	
	A definition of interface polyconvexity can be given as follows (see \cite[Definition 5.1, Theorem 5.3]{Silhavy}).
	
	\begin{defi}\label{def: interface polyconvexity 1}
		Let $d,m\in\N$, let $t=\min\{d-1,m\}$ and let $\mathcal{G} \coloneqq \{(F,n)\in\mathcal{L}(\Rd;\R^{m}) \times S^{d-1} : Fn=0 \}$. A map $\hat{f} : \mathcal{G} \rightarrow\R\cup\{\infty\}$ is said to be interface polyconvex if there exists a positively $1$-homogeneous convex map $\Psi: Y\rightarrow \R\cup\{\infty\}$ on
		$
		Y \coloneqq \prod_{k=0}^{t}\mathcal{L}( \Lambda_{k+1}\Rd;\Lambda_{k}\R^{m} )
		$ 
		such that
		\begin{equation*}\label{eq: def 1 interface polyconvexity}
			\hat{f}(F,n) = \Psi(\Lambda_{0}F\wedge n, \Lambda_{1}F\wedge n,\ldots, \Lambda_{t}F\wedge n)
		\end{equation*}
		for each $(F,n)\in \mathcal{G}$.
	\end{defi}
	
	As seen in Proposition \ref{prop: interface implies tangentially} and \ref{prop tangentially implies interface}, the key difference between tangential polyconvexity and interface polyconvexity is that, in the latter, the map $\Psi$ needs to be positively $1$-homogeneous.
	Definition \ref{def: interface polyconvexity 1} comes from a characterization (see \cite[Theorem 5.3]{Silhavy}) more suitable for our framework. The original \cite[Definition 5.1]{Silhavy} defines a map as interface polyconvex if it is a supremum of a family of null Lagrangians, which, by \cite[Theorem 5.2]{Silhavy}, are linear combinations of maps of the form $\Lambda_{k}F\wedge n$ with $0\leq k\leq d-1$. Because of this, such suprema (and hence, the maps $\Psi$ of interfacial polyconvexity) are convex and positively $1$-homogeneous. In contrast, in the case of tangential polyconvexity, the map $\Psi$ only needs to be convex, so it can be expressed as a supremum of a family of affine maps, a property that does not grant the positive $1$-homogeneity. In this sense, interface polyconvexity is a more restrictive concept than tangential polyconvexity.
	
	Positive $1$-homogeneity is not necessary to achieve the lower semicontinuity of the energy functionals, as shown in \cite[Section 6]{Silhavy} and in Section \ref{se:existence} below.
	In \cite{Silhavy}, positive $1$-homogeneity is related to the increase of the area of the so-called competitor interface (in our case, $\partial\Omega$).
	
	\section{Tangential polyconvexity in surface potentials}\label{se:examples}
	
	Explicit examples of the elastic energy $U : \mathcal{D}_{U} \rightarrow \R$ from \eqref{functional to minimize}, related to pressure loading and membrane loading on $\partial\Omega$, can be found in \cite{PodioVergara}. The context of \cite{PodioVergara} requires maps $u \in C^{1}(\overline{\Omega};\Rd)$ and an important role is played by $\cof D u(x) n(x)$ for $x \in \partial \Omega$.
	In our case we work with maps $u \in W^{1, p} (\partial \Omega; \Rd)$, so we have to give a proper definition of $\cof D u(x) n(x)$.
	
	We first state some facts from multilinear algebra complementing those of Section \ref{sec: Interface polyconvexity}.
	Assume that $V\subset\Rd$ is a $(d-1)$-dimensional vector subspace of $\Rd$.
	For $k \in \N$, the space $\Lambda_{k}V$ consists of all alternating $k$-tensors on $V$.
	We will make the natural identifications $\Lambda_{0}V \simeq \Lambda_{d-1} V \simeq \R$ and $\Lambda_{1}V \simeq V$.
	Let $L \in \mathcal{L} (V ; \Rd)$.
	The map $\Lambda_{k}L : \Lambda_{k}V \rightarrow \Lambda_{k}\Rd$ is defined as the only linear map such that $(\Lambda_{k}L)(a_{1}\wedge \cdots \wedge a_{k}) = La_{1} \wedge \cdots \wedge La_{k}$ for $a_{1},\ldots,a_{k}\in V$.

	Let $\{ v_1, \ldots, v_{d-1} \}$ be an orthonormal basis of $V$, let $n$ be a unit normal vector to $V$ and consider $v_{d} = n$.
	The space $\Lambda_{d-1} V$ is generated by $\{v_{1} \wedge \cdots \wedge v_{d-1}\}$ and can be identified with the subspace of $\Rd$ generated by $n$.
	Let $F \in \mathcal{L}(V;\Rd)$. For any $\widetilde{F} \in \mathcal{L}(\Rd ; \Rd)$ extending $F$, the vector $\cof(\widetilde{F})n$ does not depend on the extension $\widetilde{F}$, since the map $\Lambda_{d-1}F$ is determined by the value $\Lambda_{d-1}F(n)$ and formula
	\begin{equation}\label{eq:Lambdacof}
		\Lambda_{d-1}F(n) = (\cof \widetilde{F})n
	\end{equation}
	holds.
	As in Lemma \ref{lemma: properties of maps with wedge product}\ref{properties of map with wedge minors}, the value of  the map $\Lambda_{d-1}F$ can be rewritten in terms of the minors as
	\begin{equation}\label{eq:Lambdad-1}
		\Lambda_{d-1}F (v_{1}\wedge\cdots\wedge v_{d-1})
		=
		\sum_{1\leq i_{1} < \cdots < i_{d-1} \leq d} M_{\substack{ i_{1} ,\ldots, i_{d-1} \\ 1, \ldots, d-1 }}(F) v_{i_1} \wedge \cdots \wedge v_{i_{d-1}}
		=
		\sum_{i=1}^{d} (-1)^{d-i} M_{d-1}(F)_{i}v_{i},
	\end{equation}
	where the minors are taken with respect to the chosen bases and we have made the identifications $v_1 \wedge \cdots \wedge v_{i-1} \wedge v_{i+1} \wedge \cdots \wedge v_d = (-1)^{d-i} v_i$ for all $i = 1, \ldots, d$.
	
	The results of the previous paragraph are now applied to $V = T_x \partial\Omega$ for varying $x \in \partial \Omega$.
	Let $\{ v_{1},\ldots,v_{d-1} \}$ be an orthonormal measurable basis of $T\partial\Omega$, let $n : \partial \Omega \rightarrow \Rd$ be the unit outward normal to $\Omega$ and consider $v_{d} = n$.
	Fix $x\in\partial\Omega$.
	Given $F \in \mathcal{L}(T_{x}\partial\Omega;\Rd)$ and any extension of it $F_x \in \mathcal{L}(\Rd;\Rd)$, by \eqref{eq:Lambdacof} and \eqref{eq:Lambdad-1},
	\begin{equation}\label{eq:cofFx}
		(\cof F_x) n(x) = \Lambda_{d-1}F (v_{1}(x)\wedge\cdots\wedge v_{d-1}(x))
		=
		\sum_{i=1}^{d} (-1)^{d-i} M_{x,d-1}(F)_{i}v_{i}(x)
	\end{equation}
	and we will apply this formula to $F = D^{\tau}u(x)$.
	
	Some examples in \cite{PodioVergara} of the boundary energy functional from \eqref{functional to minimize} are 
	\[
	\int_{\partial\Omega} U_{i}(x,y,F,n) \dd \mathcal{H}^{d-1}(x), \qquad i=1,2 ,
	\]
	with, in their notation,
	\[
	U_{1}(x,y,F,n) = \pi(y)y\cdot \cof Du(x)n(x) \qquad \text{and} \qquad U_{2}(x,y,F,n) = \varepsilon_{0} \abs{\cof Du(x) n(x)}.
	\] 
	The expression of $U_{1}$ corresponds to a body having pressure interaction with its environment (see \cite[Proposition 5.1]{PodioVergara}) with $\pi : \Rd \rightarrow \R_{+}$ being some pressure function depending on the traction boundary condition. The expression of $U_{2}$ corresponds to a body with membrane interaction with its environment (see \cite[Proposition 5.3]{PodioVergara}); intuitively, this is a body with an elastic membrane glued to it with $\varepsilon_{0}>0$ being a constant representing the material modulus of the membrane.
	
	These examples can be rewritten with our notation as follows. Using \eqref{eq:cofFx}, the case of pressure interaction has the expression
	\begin{equation*}\label{eq: example pressure loading surface energy potential}
		U_{1}( x, y, F , n ) = \pi(y) y \cdot \left( \sum_{i=1}^{d} (-1)^{d-i} M_{x,d-1}(F)_{i}v_{i}(x) \right) ,
	\end{equation*}
	which is linear with respect to the minors of $F$, and thus tangentially polyconvex.
	The case of the membrane interaction has the expression
	\begin{equation*}\label{eq: example membrane loading surface energy potential}
		U_{2}( x, y , F , n ) = \varepsilon_{0} \left( \sum_{i=1}^{d}\left( M_{x,d-1}(F)_{i} \right)^2 \right)^{1/2},
	\end{equation*}
	which is convex with respect to the minors of $F$, and thus tangentially polyconvex.
	
	Suitable examples of energy functions should be coercive (see Theorem \ref{teorema existencia minimizadores} below).
	Neither $U_1$ nor $U_2$ satisfy this condition.
	Nevertheless, if we define the energies as
	\begin{equation*}\label{V1 modificada}
		\overline{U_{i}}( x, y, F, n ) \coloneqq U_{i}( x, y, F , n ) + c |F|^{p}, \qquad i =1,2 ,
	\end{equation*}
	we achieve the coercivity and retain the tangential polyconvexity, provided $c>0$ and $p>1$.
	
	\section{Existence of minimizers}\label{se:existence}
	
	In this section, we prove the existence of minimizers of the functional $I$ in \eqref{functional to minimize} in the class $\overline{\mathcal{A}}_{p}(\Omega)\cap\AIB$ under some natural conditions on the integrands; essentially, polyconvexity of $W$, tangential polyconvexity of $U$ and standard coercivity assumptions. 
	
	The compactness of $\overline{\mathcal{A}}_{p}(\Omega)$ shown in \cite[Prop.\ 10.2]{Henao-Mora-Oliva} together with the compactness of $\AIB$ given by Lemma \ref{compactness of AIB} imply to the following result.
	
	\begin{prop}\label{prop 10.2}
		Let $p>d-1$. Let $\{u_{j}\}_{j\in\N}\subset\overline{\mathcal{A}}_{p}(\Omega)\cap\AIB$ be such that $\{u_{j}\}_{j\in\N}$ is bounded in $W^{1,p}(\Omega;\Rd)\cap W^{1,p}(\partial\Omega;\Rd)$ and $\{\det Du_{j}\}_{j\in\N}$ is equiintegrable.
		Then there exists $u\in\overline{\mathcal{A}}_{p}(\Omega)\cap\AIB$ such that, for a subsequence, $$u_{j}\rightharpoonup u\qquad\text{in\ }W^{1,p}(\Omega;\Rd)\cap W^{1,p}(\partial\Omega;\Rd)\qquad\text{and}\qquad\det Du_{j}\rightharpoonup\det Du\qquad\text{in\ }L^{1}(\Omega)$$as $j\to\infty$.
	\end{prop}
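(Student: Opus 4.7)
The plan is to combine two compactness statements that are essentially already in hand: the compactness of $\overline{\mathcal{A}}_p(\Omega)$ from \cite{Henao-Mora-Oliva} and the compactness of $\AIB$ from Lemma \ref{compactness of AIB}, together with a single diagonal extraction to keep the same subsequence throughout.

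First I would invoke \cite[Prop.\ 10.2]{Henao-Mora-Oliva} directly on $\{u_j\}_{j\in\N}$: since the sequence is bounded in $W^{1,p}(\Omega;\Rd)\cap W^{1,p}(\partial\Omega;\Rd)$ and $\{\det Du_j\}_{j\in\N}$ is equiintegrable, that result produces a subsequence (not relabeled) and a map $u\in\overline{\mathcal{A}}_p(\Omega)$ with
\[
u_j\rightharpoonup u\ \text{in}\ W^{1,p}(\Omega;\Rd)\cap W^{1,p}(\partial\Omega;\Rd),\qquad \det Du_j\rightharpoonup\det Du\ \text{in}\ L^1(\Omega).
\]
This already handles the membership in $\overline{\mathcal{A}}_p(\Omega)$ and both weak convergences.

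It remains to verify $u\in\AIB$. For this I would restrict attention to the boundary traces: the weak convergence $u_j\rightharpoonup u$ in $W^{1,p}(\partial\Omega;\Rd)$ is part of the conclusion above. By hypothesis each $u_j$ lies in $\AIB$, so $\{u_{j|\partial\Omega}\}_{j\in\N}\subset W^{1,p}(\partial\Omega;\Rd)\cap\AIB$ is a sequence satisfying exactly the hypotheses of Lemma \ref{compactness of AIB} for the Lipschitz domain $\Omega$ and exponent $p>d-1$. Applying that lemma yields $u_{|\partial\Omega}\in\AIB$, which by Definition \ref{definition AIB} means $u\in\AIB$.

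The two steps are independent and both essentially quoted, so there is no real obstacle; the only thing to be a bit careful about is to extract the subsequence from Prop.\ 10.2 of \cite{Henao-Mora-Oliva} first and then verify that this same subsequence still falls under the hypotheses of Lemma \ref{compactness of AIB}, which it does automatically since weak convergence in $W^{1,p}(\partial\Omega;\Rd)$ is already granted. Combining the two conclusions gives $u\in\overline{\mathcal{A}}_p(\Omega)\cap\AIB$ along with the two weak convergences, completing the proof.
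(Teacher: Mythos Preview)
Your proposal is correct and follows exactly the approach the paper indicates: the paper does not give a detailed proof but simply states that the result follows from \cite[Prop.\ 10.2]{Henao-Mora-Oliva} together with Lemma \ref{compactness of AIB}. Your only addition is the (correct) remark that one should extract the subsequence from \cite[Prop.\ 10.2]{Henao-Mora-Oliva} first and then apply Lemma \ref{compactness of AIB} to that same subsequence, which is harmless since the lemma requires only weak convergence in $W^{1,p}(\partial\Omega;\Rd)$.
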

	
	We now state some elementary Poincar\'e inequalities.
	
	\begin{lema}\label{lema 2 Poincaré type ineq}
		Let $p \geq 1$.
		Let $\Omega\subset\Rd$ be a bounded Lipschitz open set such that $\partial\Omega$ is connected, let $\Gamma\subseteq\partial\Omega$ be a rectificable set with positive $\mathcal{H}^{d-1}$ measure.
		Then there exists $C>0$ such that for all $u\in W^{1,p}(\partial\Omega)$ with $u_{|\Gamma}= 0$, one has
		\begin{equation}\label{Poincare type ineq lema 1}
			\|u\|_{L^{p}(\partial\Omega)}\leq C\|D^{\tau}u\|_{L^{p}(\partial\Omega)} .
		\end{equation}
	\end{lema}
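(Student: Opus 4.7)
The plan is to argue by the classical compactness-and-contradiction strategy for Poincaré inequalities, transported to the Lipschitz manifold $\partial\Omega$ via the parametrizations $\Psi_{i}$ from Section \ref{seccion algebra multi}. Suppose \eqref{Poincare type ineq lema 1} fails; then there is a sequence $\{u_{n}\}_{n\in\N}\subset W^{1,p}(\partial\Omega)$ with $u_{n|\Gamma}=0$, $\|u_{n}\|_{L^{p}(\partial\Omega)}=1$ and $\|D^{\tau}u_{n}\|_{L^{p}(\partial\Omega)}\to 0$. In particular $\{u_{n}\}$ is bounded in $W^{1,p}(\partial\Omega)$.

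First I would apply Rellich--Kondrachov chart by chart: each $\mathcal{L}_{i}(u_{n})=u_{n}\circ\Psi_{i}$ is bounded in $W^{1,p}((0,r)^{d-1})$, hence admits a subsequence converging strongly in $L^{p}((0,r)^{d-1})$. Through Lemma \ref{Lema de if and only if} and a diagonal argument over the finite cover $\{\Gamma_{i}\}_{i=1}^{m_{0}}$, one obtains a subsequence (still denoted $\{u_{n}\}$) converging strongly in $L^{p}(\partial\Omega)$ to some $u$. Because $D^{\tau}u_{n}\to 0$ strongly in $L^{p}$, the limit $u$ lies in $W^{1,p}(\partial\Omega)$ with $D^{\tau}u = 0$ a.e., while $\|u\|_{L^{p}(\partial\Omega)}=1$.

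The heart of the argument is then to conclude that $u$ is constant on $\partial\Omega$. Locally, $\mathcal{L}_{i}(u)$ has zero weak gradient on the connected cube $(0,r)^{d-1}$, so it is constant there, and hence $u$ is $\mathcal{H}^{d-1}$-a.e.\ constant on each $\Gamma_{i}$. Since $\partial\Omega$ is connected by Proposition \ref{proposition of the connectedness of boundary} and $\{\Gamma_{i}\}_{i=1}^{m_{0}}$ is a finite open cover, any two points in $\partial\Omega$ are joined by a chain $\Gamma_{i_{1}},\ldots,\Gamma_{i_{k}}$ with $\Gamma_{i_{j}}\cap\Gamma_{i_{j+1}}\neq\emptyset$; each such overlap is open in $\partial\Omega$ and thus of positive $\mathcal{H}^{d-1}$-measure, so the local constants must agree across it. Therefore $u \equiv c$ on $\partial\Omega$ for some $c\in\R$.

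Finally, the strong $L^{p}(\partial\Omega)$ convergence transfers the trace condition, giving $u_{|\Gamma}=0$ and forcing $c=0$, which contradicts $\|u\|_{L^{p}(\partial\Omega)}=1$. I expect the main obstacle to be the gluing step: rigorously propagating the local constants across the chain of overlapping charts on a merely Lipschitz boundary, which is precisely where the hypothesis of connectedness of $\partial\Omega$ enters in an essential way.
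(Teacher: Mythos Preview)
The paper does not supply a proof of this lemma; it is stated, together with Lemmas \ref{lema 3 Poincaré type ineq} and \ref{lema 4 Poincaré type ineq}, under the heading ``elementary Poincar\'e inequalities'' and left without argument. Your compactness-and-contradiction proof is a standard and correct way to justify it, and nothing in the paper conflicts with it.

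One small remark: you invoke Proposition \ref{proposition of the connectedness of boundary} to obtain connectedness of $\partial\Omega$, but connectedness of $\partial\Omega$ is already an explicit hypothesis of the lemma, so that reference is superfluous (and the other hypotheses of that proposition are not assumed here anyway). Also, when you say ``the strong $L^{p}(\partial\Omega)$ convergence transfers the trace condition,'' it is worth making explicit that you pass to a further subsequence converging $\mathcal{H}^{d-1}$-a.e.\ on $\partial\Omega$, so that $u=0$ $\mathcal{H}^{d-1}$-a.e.\ on $\Gamma$; since $\mathcal{H}^{d-1}(\Gamma)>0$ and $u\equiv c$, this forces $c=0$. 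With those cosmetic adjustments your argument is complete.
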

	
	\begin{lema}\label{lema 3 Poincaré type ineq}
		Let $p \geq 1$.
		Let $\Omega\subset\Rd$ be a bounded Lipschitz domain.
		Then there exists $C>0$ such that for all $u\in W^{1,p}(\Omega)\cap L^{p}(\partial\Omega)$ with
		\begin{equation}\label{integral condition lemma 3}
			\int_{\partial\Omega}u(x)\dd\mathcal{H}^{d-1}(x)=0
		\end{equation}
		one has
		\begin{equation*}%\label{Poincare type ineq lema 3}
			\|u\|_{L^{p}(\Omega)}\leq C\|Du\|_{L^{p}(\Omega)}.
		\end{equation*}
	\end{lema}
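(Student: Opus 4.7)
The plan is to argue by contradiction using a standard compactness argument (essentially the same scheme that gives the classical Poincaré--Wirtinger inequality), adapted to the boundary-average constraint in place of the volume-average.

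Suppose no such constant $C$ exists. Then I would construct a sequence $\{u_n\}_{n\in\N} \subset W^{1,p}(\Omega)\cap L^p(\partial\Omega)$ such that $\int_{\partial\Omega} u_n\, \dd\mathcal{H}^{d-1} = 0$, $\|u_n\|_{L^p(\Omega)} = 1$ and $\|Du_n\|_{L^p(\Omega)} \to 0$ as $n \to \infty$. In particular $\{u_n\}_{n\in\N}$ is bounded in $W^{1,p}(\Omega)$, so by the Rellich--Kondrachov theorem there exists a subsequence (not relabeled) converging strongly in $L^p(\Omega)$ to some $u \in L^p(\Omega)$. Combined with $Du_n \to 0$ in $L^p(\Omega;\R^{d\times d})$, this yields $u_n \to u$ strongly in $W^{1,p}(\Omega)$ with $Du = 0$ a.e.

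Since $\Omega$ is a domain (hence connected), $Du = 0$ forces $u$ to be a constant $c \in \R$, and the normalization $\|u_n\|_{L^p(\Omega)} = 1$ passes to the limit giving $|c|\, \mathcal{L}^d(\Omega)^{1/p} = 1$, so $c \ne 0$. On the other hand, the continuity of the trace operator $W^{1,p}(\Omega) \to L^p(\partial\Omega)$ gives $u_{n|\partial\Omega} \to c$ in $L^p(\partial\Omega)$, whence
\[
0 = \int_{\partial\Omega} u_n(x)\, \dd\mathcal{H}^{d-1}(x) \;\longrightarrow\; c\,\mathcal{H}^{d-1}(\partial\Omega).
\]
Because $\mathcal{H}^{d-1}(\partial\Omega) > 0$ (as $\Omega$ is a bounded Lipschitz domain), this forces $c = 0$, contradicting $c \ne 0$.

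There is no serious obstacle: the argument is routine once one observes that the only nontrivial ingredients are (i) compact embedding $W^{1,p}(\Omega) \hookrightarrow L^p(\Omega)$ for Lipschitz $\Omega$, (ii) connectedness of $\Omega$ to conclude that $Du=0$ implies $u$ constant, and (iii) continuity of the trace so that the zero-average constraint passes to the limit. The only mild point worth flagging is to note explicitly that in the paper's conventions ``domain'' means connected, so step (ii) applies.
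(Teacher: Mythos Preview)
Your argument is correct and is the standard compactness-contradiction proof of this Poincar\'e-type inequality. Note, however, that the paper does not actually prove this lemma: it is listed together with Lemmas~\ref{lema 2 Poincaré type ineq} and~\ref{lema 4 Poincaré type ineq} under the heading ``We now state some elementary Poincar\'e inequalities'' and left unproved as a known result. So there is no paper proof to compare against; your write-up simply supplies the omitted standard argument. One cosmetic point: since $u$ is scalar-valued here, $Du_n \to 0$ should be in $L^p(\Omega;\Rd)$ rather than $L^p(\Omega;\R^{d\times d})$.
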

	
	\begin{lema}\label{lema 4 Poincaré type ineq}
		Let $p \geq 1$.
		Let $\Omega\subset\Rd$ be a bounded Lipschitz open set such that $\partial\Omega$ is connected.
		Then there exists $C>0$ such that for all $u\in W^{1,p}(\partial\Omega)$ with \eqref{integral condition lemma 3}, one has \eqref{Poincare type ineq lema 1}.
	\end{lema}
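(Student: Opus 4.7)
The plan is a standard compactness-contradiction argument, analogous to the usual proof of Poincar\'e's inequality for mean-zero functions.

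Suppose the inequality fails; then for each $n\in\N$ there exists $u_n\in W^{1,p}(\partial\Omega)$ with $\int_{\partial\Omega}u_n\,\dd\mathcal{H}^{d-1}=0$ and $\|u_n\|_{L^p(\partial\Omega)}>n\|D^{\tau}u_n\|_{L^p(\partial\Omega)}$. Normalizing by $\|u_n\|_{L^p(\partial\Omega)}$ we may assume $\|u_n\|_{L^p(\partial\Omega)}=1$, so that $\|D^{\tau}u_n\|_{L^p(\partial\Omega)}<1/n\to 0$. Then $\{u_n\}$ is bounded in $W^{1,p}(\partial\Omega)$.

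Using the bi-Lipschitz charts $\Psi_i$ introduced before Lemma \ref{Lema de if and only if}, the sequence $\mathcal{L}_i(u_n)$ is bounded in $W^{1,p}((0,r)^{d-1})$ for every $i=1,\ldots,m_0$. By the Rellich--Kondrachov theorem in each chart, together with Lemma \ref{Lema de if and only if}, a subsequence (not relabelled) converges weakly in $W^{1,p}(\partial\Omega)$ and strongly in $L^p(\partial\Omega)$ to some $u\in W^{1,p}(\partial\Omega)$. The zero-mean condition is stable under $L^p$ convergence, so $\int_{\partial\Omega}u\,\dd\mathcal{H}^{d-1}=0$, and strong convergence gives $\|u\|_{L^p(\partial\Omega)}=1$.

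On the other hand, $D^{\tau}u_n\to 0$ strongly in $L^p(\partial\Omega)$, so $D^{\tau}u=0$ $\mathcal{H}^{d-1}$-a.e.\ on $\partial\Omega$. Reading this in charts via $\mathcal{L}_i$, $\mathcal{L}_i(u)$ has zero distributional derivative on the connected set $(0,r)^{d-1}$, hence is a constant $c_i$. Since $\partial\Omega$ is connected (by hypothesis, with Proposition \ref{proposition of the connectedness of boundary} providing the global connectivity) and the charts $\{\Gamma_i\}$ form an open cover, the constants $c_i$ must agree on overlaps, so $u$ is a single constant on $\partial\Omega$. Combined with the zero-mean condition this forces $u\equiv 0$, contradicting $\|u\|_{L^p(\partial\Omega)}=1$.

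The only non-routine point is the compact embedding $W^{1,p}(\partial\Omega)\hookrightarrow L^p(\partial\Omega)$, which is obtained chart by chart through Lemma \ref{Lema de if and only if} and the bi-Lipschitz character of the parametrizations $G_i$; this is where the Lipschitz regularity of $\partial\Omega$ is used.
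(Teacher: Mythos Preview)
The paper does not actually prove this lemma: it is listed among the ``elementary Poincar\'e inequalities'' stated without proof, so there is no authors' argument to compare against. Your compactness--contradiction approach is correct and is the standard way to establish such inequalities; the use of the bi-Lipschitz charts $\Psi_i$ and Lemma \ref{Lema de if and only if} to transfer Rellich--Kondrachov and the constancy of zero-gradient functions to $\partial\Omega$ is exactly the right mechanism. One minor remark: the connectedness of $\partial\Omega$ is already part of the hypothesis, so invoking Proposition \ref{proposition of the connectedness of boundary} is unnecessary here.
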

	
	Lower semicontinuity for tangentially quasiconvex integrands was proved in \cite[Proposition 2.5]{DaFoMaTr99}. 
	We now prove the lower semicontinuity of the boundary integral of the elastic energy in \eqref{functional to minimize} under the assumptions previously stated on the integrand $U$.
	
	\begin{lema}\label{lemma: lower semicontinuity of I}
		Let $p>d-1$. Recall $\mathcal{D}_{U}$ from Remark \ref{remark: dominio de U} and let $U: \mathcal{D}_{U} \rightarrow\R$ be an  $\mathcal{H}^{d-1}_{|\partial\Omega} \times \mathcal{B}^{d} \times \mathcal{B}^{d\times (d-1)} \times \mathcal{B}_{|S^{d-1}}$-measurable map 
		such that $U(x,\cdot,\cdot,\cdot)$ is lower semicontinuous for $\mathcal{H}^{d-1}$-a.e.\ $x\in\partial\Omega$, 
		such that $U(\cdot,y,\cdot,n)$ is tangentially polyconvex for every $y\in\Rd$ and for every $n\in S^{d-1}$ and 
		such that there exists a constant $c>0$ and a map $a\in L^{1}(\partial\Omega)$ with
		\[
		U(x,y,F,n)\geq a(x)+c\abs{F}^{p}
		\]
		for $\mathcal{H}^{d-1}$-a.e.\ $x\in\partial\Omega$, all $y\in\Rd$, all $F \in \mathcal{L}(T_x \partial \Omega;\Rd)$ and all $n \in S^{d-1}$.
		Then for any $\{u_{j}\}_{j\in\N} \subset W^{1,p}(\partial\Omega;\Rd)$ such that $u_{j}\rightharpoonup u$ in $W^{1,p}(\partial\Omega;\Rd)$ for some $u\in W^{1,p}(\partial\Omega;\Rd)$ we have that
		\[
		\int_{\partial\Omega} U(x,u(x),D^{\tau}u(x),n(x)) \dd\mathcal{H}^{d-1}(x) \leq \liminf_{j \to \infty} \int_{\partial\Omega} U(x, u_j(x), D^{\tau} u_j(x), n(x)) \dd\mathcal{H}^{d-1}(x).
		\]
	\end{lema}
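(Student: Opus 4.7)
The plan is to combine the weak continuity of the minors of the tangential derivative from Proposition \ref{convergencia de menores} with a classical Ioffe--Olech-type lower semicontinuity theorem for integrands convex in one of the variables. The convexity-in-the-minors coming from tangential polyconvexity is exactly the hypothesis required, while the assumption $p>d-1$ upgrades the weak convergence $u_j\rightharpoonup u$ in $W^{1,p}(\p\O;\Rd)$ to uniform convergence $u_j\to u$ on $\p\O$ via the compact embedding $W^{1,p}(\p\O;\Rd)\hookrightarrow C(\p\O;\Rd)$.

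\textbf{Reductions and setup.} Assume without loss of generality that the $\liminf$ is finite and pass to a subsequence (still denoted $\{u_j\}$) realizing it. Replacing $U$ by $U-a$, we may further assume $U\geq 0$. Fix a measurable orthonormal basis $\mathcal{V}=\{v_1,\ldots,v_{d-1}\}$ of $T\p\O$. By Proposition \ref{prop: indpendence of measurable basis}, the tangential polyconvexity of $U(\cdot,y,\cdot,n)$ provides, for every $(y,n)\in\Rd\times S^{d-1}$, a convex function $\Phi(x,y,\cdot,n):\R^{\nu_{d-1}}\to\R$ with
\[
U(x,y,F,n)=\Phi(x,y,M(F),n), \qquad F\in\mathcal{L}(T_x\p\O;\Rd),
\]
where $M(F)$ is computed with respect to $\mathcal{V}_x$ and the canonical basis of $\Rd$. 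A jointly $\mathcal{H}^{d-1}$-measurable and jointly lower semicontinuous (in $(y,\xi,n)$) version of $\Phi$ is obtained as the pointwise supremum of its affine minorants in $\xi$, inheriting the joint lower semicontinuity from that of $U$.

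\textbf{Passage to the limit.} By the compact embedding, $u_j\to u$ uniformly on $\p\O$, so in particular $u_j(x)\to u(x)$ for $\mathcal{H}^{d-1}$-a.e.\ $x\in\p\O$. By Proposition \ref{convergencia de menores}, $M(D^{\tau}u_j)\rightharpoonup M(D^{\tau}u)$ in $L^1(\p\O;\R^{\nu_{d-1}})$. Define $\widetilde{\Phi}(x,y,\xi):=\Phi(x,y,\xi,n(x))$, which is a normal integrand on $\p\O$, convex in $\xi$ and nonnegative. The standard Ioffe--Olech lower semicontinuity theorem then yields
\[
\int_{\p\O}\widetilde{\Phi}(x,u(x),M(D^{\tau}u(x)))\,\dd\mathcal{H}^{d-1}(x)\leq\liminf_{j\to\infty}\int_{\p\O}\widetilde{\Phi}(x,u_j(x),M(D^{\tau}u_j(x)))\,\dd\mathcal{H}^{d-1}(x),
\]
and the identity $\widetilde{\Phi}(x,y,M(F))=U(x,y,F,n(x))$ translates this back into the desired inequality for $U$.

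\textbf{Main obstacle.} The only non-routine step is to secure a representative $\Phi$ that is simultaneously a normal integrand on $\p\O\times\Rd\times\R^{\nu_{d-1}}$ (after composing with the measurable outward normal $n(x)$) and convex in $\xi$, rather than merely existing separately for each fixed $(y,n)$. The supremum-of-affine-minorants construction achieves this, but it requires combining the Carath\'eodory-type regularity hypothesis on $U$ with the basis-independence of tangential polyconvexity (Proposition \ref{prop: indpendence of measurable basis}); once this is in place, both the weak $L^1$-convergence of the minors (Proposition \ref{convergencia de menores}) and the Ioffe--Olech theorem are applied in a standard way.
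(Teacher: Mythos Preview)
Your proposal is correct and follows essentially the same approach as the paper: both combine Proposition \ref{convergencia de menores} (weak $L^{1}$-continuity of the tangential minors) with a standard lower-semicontinuity theorem for integrands convex in one variable (the paper cites \cite[Theorem 7.5]{Fonseca-Leoni}, which is precisely the Ioffe--Olech result you invoke). Your treatment is in fact slightly more careful than the paper's, which writes $\Phi$ as a function of $(x,M(F))$ only and thereby glosses over the $(y,n)$-dependence that you correctly flag and address in your ``Main obstacle'' paragraph.
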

	\begin{proof}
		By Proposition \ref{convergencia de menores} we have that $Ml( D^{\tau}u_{n} ) \rightharpoonup Ml( D^{\tau}u )$ in $L^{1}(\partial\Omega)$ as $n\to\infty$. Since $U(\cdot,y,\cdot,n)$ is tangentially polyconvex for each $y\in\Rd$ and each $n\in S^{d-1}$, let $\Phi: \partial\Omega \times \R^{\nu_{d-1}} \rightarrow \R $ be the map such that $\Phi(x,\cdot)$ is convex for $\mathcal{H}^{d-1}$-a.e.\ $x\in\partial\Omega$ and $U(x,y,F,n) = \Phi(x,M(F))$ for each $F \in \mathcal{L}(T_{x}\partial\Omega;\Rd)$, each $y\in\Rd$ and each $n\in S^{d-1}$, where $F$ is taken as the matrix representation with respect to some measurable basis, which can be taken as an $L^{\infty}$ basis thanks to Proposition \ref{prop: indpendence of measurable basis}, and the canonical basis of $\Rd$. Then we have that
		\begin{equation*}
			\begin{split}
				\int_{\partial\Omega} U(x,u(x),D^{\tau}u(x),n(x)) \dd\mathcal{H}^{d-1}(x) &
				=
				\int_{\partial\Omega} \Phi(x,M( D^{\tau}u(x) )) \dd\mathcal{H}^{d-1}(x)\\
				& \leq
				\liminf_{j\to\infty} \int_{\partial\Omega} \Phi(x,M( D^{\tau}u_{j}(x) )) \dd\mathcal{H}^{d-1}(x) \\
				& =
				\liminf_{j\to\infty} \int_{\partial\Omega} U(x,u_{j}(x),D^{\tau}u_{j}(x),n(x)) \dd\mathcal{H}^{d-1}(x)
			\end{split}
		\end{equation*}
		thanks to \cite[Theorem 7.5]{Fonseca-Leoni} and Definition \ref{def: convergencia de menores de una matriz}.
	\end{proof}
	
	We now show the existence of minimizers.
	As before, we consider $D^{\tau}u$ as a map from $\partial\Omega$ to $\R^{d\times(d-1)}$ by fixing a measurable basis.
	
	\begin{teorema}\label{teorema existencia minimizadores}
		Let $p>d-1$ and let $\Omega\subset\Rd$ be a bounded Lipschitz open set such that $\Rd\setminus\partial\Omega$ has exactly two connected components.
		Let $W:\Omega\times\Rd\times\R_{+}^{d\times d}\rightarrow\R$ and $U: \partial\Omega\times\Rd\times\R^{d\times (d-1)}\times S^{d-1} \rightarrow\R$ satisfy the following conditions:
		\begin{enumerate}[label=(\roman*)]
			\item\label{a teorema existencia minimizadores}
			$W$ is $\mathcal{L}^{d}\times\mathcal{B}^{d}\times\mathcal{B}^{d\times d}$-measurable and $U$ is $\mathcal{H}^{d-1}_{|\partial\Omega} \times \mathcal{B}^{d} \times \mathcal{B}^{d\times (d-1)} \times \mathcal{B}_{|S^{d-1}}$-measurable, where $\mathcal{B}^{d}$ denotes the Borel $\sigma$-algebra in $\R^{d}$.
			
			\item\label{b teorema existencia minimizadores}
			$W(x,\cdot,\cdot)$ and $U(x,\cdot,\cdot,\cdot)$ are lower semicontinuous for a.e.\ $x\in\Omega$ and $\mathcal{H}^{d-1}$-a.e.\ $x\in\partial\Omega$, respectively.
			
			\item\label{c teorema existencia minimizadores}
			For a.e.\ $x\in\Omega$ and every $y\in\Rd$, the function $W(x,y,\cdot)$ is polyconvex; and for every $y\in\Rd$ and for every $n\in S^{d-1}$, the function $U(\cdot,y,\cdot,n)$ is tangentially polyconvex.
			
			\item\label{d teorema existencia minimizadores}
			There exist constants $c_{1},c_{2}>0$, functions $a_{1}\in L^{1}(\Omega)$, $a_{2}\in L^{1}(\partial\Omega)$ and a Borel function $h:(0,\infty)\rightarrow[0,\infty)$ such that
			\[
			\lim_{t\searrow0}h(t)=\lim_{t\to\infty}\frac{h(t)}{t}=\infty,
			\]
			\[
			W(x,y,F)\geq a_{1}(x)+c_{1}\abs{F}^{p}+h(\det F)\quad\text{for a.e.\ }x\in\Omega,\ \text{all\ }y\in\Rd\text{ and all\ }F\in\R_{+}^{d\times d}
			\]
			and
			\[
			U(x,y,F,n)\geq a_{2}(x)+c_{2}\abs{F}^{p}\quad\text{for $\mathcal{H}^{d-1}$-a.e.\ }x\in\partial\Omega,\ \text{ all\ }y\in\Rd, \text{ all\ } F \in \mathcal{L}(T_x \partial \Omega;\Rd) \text{ and all } n \in S^{d-1}.
			\]
		\end{enumerate}
		Let $I$ be as in \eqref{functional to minimize}.
		Consider the following admissible classes:
		\begin{enumerate}[label=\arabic*)]
			\item
			Let $\Gamma$ be a rectificable subset of $\partial\Omega$ with positive $\mathcal{H}^{d-1}$ measure, and let $u_{0} : \Gamma \to \R^d$.
			Define $\mathcal{A}_{1}$ as the set of $u\in\overline{\mathcal{A}}_{p}(\Omega)\cap\AIB$ such that $\det Du>0$ a.e.\  and $u_{|\Gamma}=u_{0|\Gamma}$.
			
			\item
			Define $\mathcal{A}_{2}$ as the set of $u\in\overline{\mathcal{A}}_{p}(\Omega)\cap\AIB$ such that $\det Du>0$ a.e.\  and $$\int_{\partial\Omega}u(x)\dd\mathcal{H}^{d-1}(x)=0.$$
			
			\item
			Let $K\subset\Rd$ be compact.
			Define $\mathcal{A}_{3}$ as the set of $u\in\overline{\mathcal{A}}_{p}(\Omega)\cap\AIB$ such that $\det Du>0$ a.e.\  and $u(x)\in K$ for a.e.\ $x\in\Omega$.
		\end{enumerate}
		Fix $i\in\{1,2,3\}$.
		Assume $\mathcal{A}_{i}\neq\emptyset$ and $I$ is not identically infinity in $\mathcal{A}_{i}$.
		Then there exists a minimizer of $I$ in $\mathcal{A}_{i}$, and any element of $\mathcal{A}_{i}$ is injective a.e.
	\end{teorema}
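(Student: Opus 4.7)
The proof will be a standard application of the direct method of the calculus of variations, combining the compactness of $\overline{\mathcal{A}}_p(\Omega) \cap \AIB$ from Proposition \ref{prop 10.2} with the lower semicontinuity coming from polyconvexity (for the volume term, in the classical Ball manner) and from Lemma \ref{lemma: lower semicontinuity of I} (for the surface term). I would fix $i \in \{1,2,3\}$, take a minimizing sequence $\{u_j\} \subset \mathcal{A}_i$, and show it has a subsequence converging weakly in $W^{1,p}(\Omega;\Rd) \cap W^{1,p}(\partial\Omega;\Rd)$ to some $u \in \mathcal{A}_i$ with $I[u] \leq \liminf_j I[u_j]$.

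The first step is to extract the a priori bounds. The coercivity assumption \ref{d teorema existencia minimizadores} gives $\|Du_j\|_{L^p(\Omega)}$ and $\|D^\tau u_j\|_{L^p(\partial\Omega)}$ bounded. To upgrade these into full $W^{1,p}$ bounds I would use the appropriate Poincar\'e inequality from among Lemmas \ref{lema 2 Poincaré type ineq}--\ref{lema 4 Poincaré type ineq} depending on the class: for $\mathcal{A}_1$, apply Lemma \ref{lema 2 Poincaré type ineq} to $u_j - \tilde u_0$ (with $\tilde u_0$ a Sobolev extension of $u_0$) to bound $\|u_j\|_{L^p(\partial\Omega)}$, then a standard Poincar\'e with partial Dirichlet condition on $\Gamma$ to bound $\|u_j\|_{L^p(\Omega)}$; for $\mathcal{A}_2$, apply Lemmas \ref{lema 3 Poincaré type ineq} and \ref{lema 4 Poincaré type ineq} directly; for $\mathcal{A}_3$, the bound is immediate from $u_j(\Omega) \subset K$. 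The growth condition $h(t)/t \to \infty$ yields, by de la Vall\'ee-Poussin's criterion, equi-integrability of $\{\det Du_j\}$ (using that $\int h(\det Du_j)$ stays bounded along the minimizing sequence).

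With these bounds, Proposition \ref{prop 10.2} produces, along a subsequence, some $u \in \overline{\mathcal{A}}_p(\Omega) \cap \AIB$ with $u_j \rightharpoonup u$ in $W^{1,p}(\Omega;\Rd) \cap W^{1,p}(\partial\Omega;\Rd)$ and $\det Du_j \rightharpoonup \det Du$ in $L^1(\Omega)$. I would then verify $u \in \mathcal{A}_i$: the boundary constraint is preserved because the trace map is weakly continuous (for $\mathcal{A}_1$, this uses the compact embedding $W^{1,p}(\partial\Omega) \hookrightarrow C(\partial\Omega)$ since $p > d-1$; for $\mathcal{A}_2$, the integral condition is linear in $u$; for $\mathcal{A}_3$, the constraint $u(x) \in K$ passes to the pointwise a.e.\ limit from the compact embedding $W^{1,p}(\Omega) \hookrightarrow L^p(\Omega)$). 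The condition $\det Du > 0$ a.e.\ is preserved by a Fatou argument applied to $h(\det Du_j)$, using $\lim_{t\searrow 0} h(t) = \infty$ to rule out $\det Du = 0$ on a set of positive measure. For the lower semicontinuity of the volume integral I would invoke the classical polyconvex lower semicontinuity result (for instance, \cite[Theorem 8.31]{Dacorogna}), whose hypotheses are the weak continuity of the minors of $Du_j$---which is precisely guaranteed by the weak convergence $\det Du_j \rightharpoonup \det Du$ in $L^1$ together with the standard weak continuity of the lower order minors in $W^{1,p}$ for $p > d-1$. For the surface term I would apply Lemma \ref{lemma: lower semicontinuity of I} directly.

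The injectivity a.e.\ of every element of $\mathcal{A}_i$ is an immediate consequence of Theorem \ref{teorema 9.1 revisited}, since each $u \in \mathcal{A}_i$ lies in $\overline{\mathcal{A}}_p(\Omega) \cap \AIB$ and satisfies $\det Du > 0$ a.e., and $\Rd \setminus \partial \Omega$ has exactly two connected components by hypothesis. The main obstacle I anticipate is the careful bookkeeping of the coercivity in each of the three cases and, in case $\mathcal{A}_1$, choosing a Sobolev extension $\tilde u_0$ that is compatible with the class $\overline{\mathcal{A}}_p(\Omega)$; but since the $\mathcal{A}_i$ are only needed to host the minimizing sequence (and the existence of such $\tilde u_0$ is not required---only that $u_j - \tilde u_0$ satisfies the Poincar\'e hypotheses), this reduces to a standard trace extension. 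The preservation of $\det Du > 0$ in the limit is the other technically delicate point, but the coercivity imposed through $h$ is designed precisely to handle it.
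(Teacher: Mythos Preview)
Your proposal is correct and follows essentially the same approach as the paper's own proof: take a minimizing sequence, use the coercivity \ref{d teorema existencia minimizadores} together with the appropriate Poincar\'e-type inequalities (Lemmas \ref{lema 2 Poincaré type ineq}--\ref{lema 4 Poincaré type ineq}) to bound it in $W^{1,p}(\Omega;\Rd)\cap W^{1,p}(\partial\Omega;\Rd)$, invoke de la Vall\'ee Poussin and Proposition \ref{prop 10.2} for compactness, then use polyconvex lower semicontinuity for the volume term and Lemma \ref{lemma: lower semicontinuity of I} for the surface term, check the admissibility of the limit (including $\det Du>0$ via Fatou and $h$), and conclude injectivity a.e.\ from Theorem \ref{teorema 9.1 revisited}. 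The only cosmetic difference is that for $\mathcal{A}_1$ the paper applies Poincar\'e directly to $u_j$ (since $u_{j|\Gamma}=u_{0|\Gamma}$ is fixed) rather than to $u_j-\tilde u_0$, and for $\mathcal{A}_3$ the $L^p(\partial\Omega)$ bound is obtained via the continuity of the trace operator; neither point affects the argument.
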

	\begin{proof}
		Fix $i\in\{1,2,3\}$.
		Let $\{u_{j}\}_{j\in\N}$ be a minimizing sequence of $I$ in $\mathcal{A}_{i}$. Assumption \ref{d teorema existencia minimizadores} implies that both $\{Du_{j}\}_{j\in\N}$ and $\{D^{\tau}u_{j}\}_{j\in\N}$ are bounded in $L^{p}(\Omega;\R^{d\times d})$ and $L^{p}(\partial\Omega; \R^{d\times (d-1)})$, respectively.
		
		Let us see that $\{u_{j}\}_{j\in\N}$ is bounded in $W^{1,p}(\Omega;\Rd) \cap W^{1,p}(\partial \Omega;\Rd)$.
		We will use that $\Omega$ and $\partial \Omega$ are connected (Proposition \ref{proposition of the connectedness of boundary}).
		In the set $\mathcal{A}_{1}$, because $u_{j|\Gamma}=u_{0|\Gamma}$ for any $j\in\N$, Poincaré's inequality gives us that $\{u_{j}\}_{j\in\N}$ is bounded in $L^{p}(\Omega;\R^{d})$, while Lemma \ref{lema 2 Poincaré type ineq} gives the boundedness of $\{u_{j}\}_{j\in\N}$ in $L^{p}(\partial\Omega;\Rd)$.
		In the case of $\mathcal{A}_{2}$, Lemma \ref{lema 3 Poincaré type ineq} gives us the boundedness of $\{u_{j}\}_{j\in\N}$ in $L^{p}(\Omega;\Rd)$, while Lemma \ref{lema 4 Poincaré type ineq} gives us the boundedness of $\{u_{j}\}_{j\in\N}$ in $L^{p}(\partial\Omega;\Rd)$.
		For the set $\mathcal{A}_{3}$, as $K$ is compact, $\{u_{j}\}_{j\in\N}$ is bounded in $L^{\infty}(\Omega;\Rd)$ and therefore in $W^{1,p}(\Omega;\Rd)$.
		By continuity of the trace operator, $\{u_{j}\}_{j\in\N}$ is bounded in $L^{p}(\partial\Omega;\Rd)$.
		In the three cases, $\{u_{j}\}_{j\in\N}$ is bounded in $W^{1,p}(\Omega;\Rd) \cap W^{1,p}(\partial \Omega;\Rd)$.
		
		Assumption \ref{d teorema existencia minimizadores} on $h$ and De la Vall\'ee Poussin's criterion imply that $\{\det Du_{j}\}_{j\in\N}$ is equiintegrable.
		By Proposition \ref{prop 10.2}, there exists $u\in\overline{\mathcal{A}}_{p}(\Omega)\cap\AIB$ such that, for a subsequence (not  relabelled),
		\begin{equation}\label{eq:ujtou}
			u_{j}\rightharpoonup u\qquad\text{in\ }W^{1,p}(\Omega;\Rd) \cap W^{1,p}(\partial \Omega;\Rd) \qquad\text{and}\qquad\det Du_{j}\rightharpoonup\det Du\qquad\text{in\ }L^{1}(\Omega)
		\end{equation}
		as $j\to\infty$.
		As $\det Du_{j} > 0$ a.e., we have that $\det Du \geq 0$ a.e.
		Thanks to the assumption on $h$, a standard argument based on Fatou's lemma (see, e.g., \cite[Th.\ 5.1]{Muller-Spector}) shows that $\det Du > 0$ a.e.
		
		As $p > d-1$, a standard result on the continuity of minors (e.g., \cite[Th.\ 8.20]{Dacorogna}) together with \eqref{eq:ujtou} shows that $M (D u_j) \rightharpoonup M (D u)$ in $L^1 (\O, \R^{\nu_{d}})$.
		By the lower semicontinuity of polyconvex functionals (e.g., \cite[Th.\ 5.4]{BaCuOl81} or \cite[Th.\ 7.5]{Fonseca-Leoni}),
		\begin{equation}\label{eq: primera parte semicontinuidad}
			\int_{\Omega} W(x,u(x),Du(x))\dd x \leq \liminf_{j \to \infty} \int_{\Omega} W(x, u_j(x), Du_j(x)) \dd x.
		\end{equation}
		By Lemma \ref{lemma: lower semicontinuity of I} and equation \eqref{eq: primera parte semicontinuidad} we have that $I[u]\leq\liminf_{j\to\infty}I[u_{j}]$.
		
		If $u_j \in \mathcal{A}_{1}$ for all $j \in \N$, then, by continuity of traces, $u_{|\Gamma}=u_{0|\Gamma}$, so $u \in \mathcal{A}_{1}$ and $u$ is a minimizer of $I$ in $\mathcal{A}_{1}$.
		If $u_j \in \mathcal{A}_{2}$ for all $j \in \N$, then $\int_{\partial \Omega} u \dd \mathcal{H}^{d-1} =0$, so $u \in \mathcal{A}_{2}$ and $u$ is a minimizer of $I$ in $\mathcal{A}_{2}$.
		If $u_j \in \mathcal{A}_{3}$ for all $j \in \N$, then, as $K$ is compact, $u (x) \in K$ for a.e.\ $x \in \Omega$, so $u \in \mathcal{A}_{3}$ and $u$ is a minimizer of $I$ in $\mathcal{A}_{3}$.
		
		The fact that any element of $\mathcal{A}_{i}$ is injective a.e.\ in $\Omega$ for each $i\in\{1,2,3\}$ is due to Theorem \ref{teorema 9.1 revisited}.
	\end{proof}
	
	Note that the particular case of $\mathcal{A}_{1}$ with $\Gamma=\partial\Omega$ does not need any assumptions in $U$ since, in this case, it is constant. Thanks to \cite[Proposition 2.5]{DaFoMaTr99} we can also assume $U$ to be tangentially quasiconvex instead of tangentially polyconvex.
	
	\section*{Acknowledgements}
	
	Both authors have been supported by the Spanish Agencia Estatal de Investigaci\'on through project  PID2021-124195NB-C32.
	C. Mora-Corral has also been supported by the Severo Ochoa Programme CEX2019-000904-S, the ERC Advanced Grant 834728 and by the Madrid Government (Comunidad de Madrid, Spain) under the multiannual Agreement with UAM in the line for the Excellence of the University Research Staff in the context of the V PRICIT (Regional Programme of Research and Technological Innovation).
	
	\bibliographystyle{abbrv}
	\bibliography{bibArt1}
\end{document}